\definecolor{dmagenta}{rgb}{.4,.1,.5}
\definecolor{dblue}{rgb}{.0,.0,.6}
\definecolor{dred}{rgb}{.5,.0,.0}
\definecolor{dgreen}{rgb}{.0,.4,.0}
\definecolor{Eeom}{rgb}{.0,.0,.5}
\numberwithin{equation}{section}
\newtheorem{theorem}{Theorem}[section]
\newtheorem{lemma}{Lemma}[section]
\newtheorem{proposition}{Proposition}[section]
\newtheorem{corollary}{Corollary}[section]
\theoremstyle{definition}
\newtheorem{definition}{Definition}[section]
\newtheorem{assumption}{Assumption}[section]
\newtheorem{example}{Example}[section]
\newtheorem{property}{Property}[section]
\theoremstyle{remark}
\newtheorem{remark}{Remark}[section]
\newcommand{\df}{:=}
\DeclareMathOperator{\Exp}{\mathbb{E}}
\DeclareMathOperator{\Prob}{\mathbb{P}}
\newcommand{\D}{\mathrm{d}}
\newcommand{\E}{\mathrm{e}}
\newcommand{\R}{\mathbb{R}}
\newcommand{\RR}{\mathbb{R}}
\newcommand{\Rd}{\mathbb{R}^d}
\newcommand{\NN}{\mathbb{N}}
\newcommand{\ZZ}{\mathbb{Z}}
\newcommand{\Act}{\mathbb{U}}
\newcommand{\Uadm}{\mathfrak{U}}
\newcommand{\Usm}{\mathfrak{U}_{\mathrm{SM}}}
\newcommand{\Ussm}{\mathfrak{U}_{\mathrm{SSM}}}
\newcommand{\bUadm}{\bm{\mathfrak{U}}}
\newcommand{\bUsm}{\bm{\mathfrak{U}}_{\mathrm{SM}}}
\newcommand{\bUssm}{\bm{\mathfrak{U}}_{\mathrm{SSM}}}
\newcommand{\calA}{\mathcal{A}}   
\newcommand{\cA}{\mathcal{A}}     
\newcommand{\calB}{\mathcal{B}}   
\newcommand{\cB}{\mathcal{B}}     
\newcommand{\calC}{\mathcal{C}}   
\newcommand{\Cc}{\mathcal{C}}     
\newcommand{\dd}{d_{\mathrm P}}    
\newcommand{\eD}{\mathfrak{D}}    
\newcommand{\sF}{\mathfrak{F}}    
\newcommand{\eG}{{\mathscr{G}}}   
\newcommand{\cG}{{\mathcal{G}}}   
\newcommand{\eH}{{\mathscr{H}}}   
\newcommand{\calK}{\mathcal{K}}   
\newcommand{\sK}{\mathscr{K}}     
\newcommand{\cK}{\mathcal{K}}     
\newcommand{\Law}{{\mathscr{L}}}  
\newcommand{\calM}{\mathcal{M}}   
\newcommand{\sM}{\mathscr{M}}     
\newcommand{\calP}{\mathcal{P}}   
\newcommand{\Pm}{\mathcal{P}}     
\newcommand{\cT}{\mathcal{T}}     
\newcommand{\calV}{\mathcal{V}}   
\newcommand{\Lyap}{\mathcal{V}}   
\newcommand{\calX}{\mathcal{X}}   
\DeclareMathOperator{\Lip}{\mathrm{Lip}}
\newcommand{\calCl}{\mathcal{C}_{\mathrm{loc}}}
\newcommand{\Sob}{\mathscr{W}}
\newcommand{\Sobl}{\mathscr{W}_{\mathrm{loc}}}
\newcommand{\Lg}{L}
\newcommand{\grad}{\nabla}
\newcommand{\imeas}{\mu}
\newcommand{\Ind}{\mathbb{I}}        
\newcommand{\transp}{^{\mathsf{T}}}  
\newcommand{\eps}{\varepsilon}
\newcommand{\del}{\delta}
\newcommand{\bvnorm}[1]{[\kern-0.45ex[\kern0.1ex #1 \kern0.1ex]\kern-0.45ex]}
\newcommand{\tc}{{\Breve\uptau}}
\newcommand{\cast}{\circledast}
\newcommand{\order}{{\mathscr{O}}} 
\newcommand{\sorder}{{\mathfrak{o}}} 
\newcommand{\abs}[1]{\lvert#1\rvert}
\newcommand{\norm}[1]{\lVert#1\rVert}
\newcommand{\babs}[1]{\bigl\lvert#1\bigr\rvert}
\newcommand{\babss}[1]{\biggl\lvert#1\biggr\rvert}
\newcommand{\bnorm}[1]{\bigl\lVert#1\bigr\rVert}
\DeclareMathOperator*{\esssup}{ess\,sup}
\DeclareMathOperator{\trace}{trace}
\DeclareMathOperator{\conv}{conv}
\let\oldtocsection=\tocsection
\let\oldtocsubsection=\tocsubsection
\let\oldtocsubsubsection=\tocsubsubsection
\renewcommand{\tocsection}[2]{\hspace{0em}\oldtocsection{#1}{#2}}
\renewcommand{\tocsubsection}[2]{\hspace{1em}\oldtocsubsection{#1}{#2}}
\renewcommand{\tocsubsubsection}[2]{\hspace{2em}\oldtocsubsubsection{#1}{#2}}
\begin{document}

\title[MFG with ergodic costs]
{On Solutions of Mean Field Games with Ergodic cost}

\author{Ari Arapostathis}
\address{Department of Electrical and Computer Engineering,
The University of Texas at Austin,
1 University Station, Austin, TX 78712}
\email{ari@ece.utexas.edu}
\author{Anup Biswas}
\address{Department of Mathematics,
Indian Institute of Science Education and Research, 
Dr. Homi Bhabha Road, Pune 411008, India.}
\email{anup@iiserpune.ac.in}
\author{Johnson Carroll}
\address{Faculty of Engineering and the Built Environment,
University of Johannesburg, Johannesburg, South Africa}
\email{jcarroll@uj.ac.za}

\date{\today}

\begin{abstract}
A general class of mean field games are considered where the governing dynamics
are controlled diffusions in $\Rd$.
The optimization criterion is the long time average of a running cost function.
Under various sets of hypotheses, we establish the existence of
mean field game solutions.
We also study the long time
behavior of the mean field game solutions associated with the finite horizon
problem, and under the assumption of geometric ergodicity for the dynamics,
we show that these converge
to the ergodic mean field game solution as the horizon tends to infinity.
Lastly, we study the associated $N$-player games, show existence of Nash equilibria,
and establish the convergence of the solutions associated to Nash equilibria
of the game to a mean field game solution as $N\to\infty$.
\end{abstract}

\subjclass[2000]{Primary: 91A13, 93E20, Secondary: 82B05}

\keywords{$N$-person games, Nash equilibrium,
optimal control, long time behavior, ergodic control, convergence
of equilibria, relative value iteration, Mckean--Vlasov limit.}

\maketitle

\tableofcontents
\section{Introduction}
Mean field games (MFG) were introduced by J.\ M.\ Lasry and P.\ L.\ Lions
\cite{lasry-lions-06-1, lasry-lions-06-2, lasry-lions}, and independently,
by Huang, Malham\'{e} and Caines \cite{Huang-06}. Mean field games are the 
limiting models for symmetric, non-zero sum, non-cooperative $N$-player games
with the interaction between the players being of mean field type.
In view of the theory of McKean-Vlasov limits and propagation of chaos for 
uncontrolled weakly interacting particle systems \cite{sznitman-91}, one may
expect to obtain convergence result
for $N$-player game Nash equilibria, at least under some symmetry conditions.
With this heuristic in mind, Lasry and Lions
introduced the field of mean field games.
Recently, rigorous results have been established
for finite horizon control problems \cite{lacker, fischer},
for mean field games with ergodic
cost \cite{feleqi}, and for discrete time Markov processes with ergodic cost
\cite{biswas}.
On the other hand, it is also known that
one can construct $\varepsilon$-Nash equilibria for $N$-player games from 
mean field game solutions. See
for example
\cite{Huang-06, carmona-delarue, carmona-lacker, Kolokoltsov-11a, lacker-15}.
Mean field games have seen a wide variety of applications, and have been
studied extensively during the last decade
using both analytic and probabilistic techniques.
We refer to the surveys in
\cite{cardaliaguet, gomes-saude, gu-lasry-lions, ben-fre-yam} for recent
developments in the area of mean field games.

In this paper, we model the controlled dynamics
of the $i$th player, $i=1, \ldots, N$, by the It\^o equation
$$dX^i_t \;=\; b(X^i_t, U^i_t)\, \D{t} + \upsigma(X^i_t)\, \D{W^i_t},$$
where $\{W^i\}_{\{1\leq i\leq N\}}$
is a collection of independent Wiener processes in $\Rd$ and $U^i$
is an \emph{admissible} control, taking values in a compact metric
space $\Act$, adapted to the filtration generated by $W^i$.
Thus the players do not have access to the full state vector for purposes
of control.
Such strategies are referred to as \emph{narrow strategies} \cite{fischer}. 
The \emph{running cost} is given by a continuous function
$r:\Rd\times\Act\times\calP(\Rd)\to\R_+$.
The goal of the $i$-th player is to minimize the (ergodic) criterion
$$J^i(\bm{U})\; =\; \limsup_{T\to\infty}\,
\frac{1}{T}\, \Exp\Bigl[\int_0^T r(X^i_t, U^i_t, \mu^N_t)\, \D{t}\Bigr],
\quad \mu^N_t\df \frac{1}{N}\sum_{j=1}^N \delta_{X^j_t},$$
where $\bm{U}=(U^1, \ldots, U^N)$.
We note that the running cost function $r$ may depend upon the
empirical distribution $\mu^N$ of the private states of the players.
Since each player's objective depend on the action of others
we naturally look for Nash equilibria.

If the number of players $N$ is very large, the contribution of the
$i$-th player in the empirical distribution 
$\mu^N$ is negligible, and therefore $\mu^N$ may as well be treated
fixed for player $i$. This heuristic argument
leads to the mean field game formulation which can be described as follows:
\begin{itemize}
\item[(a)]
For a fixed element
$\eta\in\calC([0, \infty), \calP(\Rd))$ solve the optimal control problem,
\begin{equation}\label{Intro-1}
\begin{split}
\text{minimize} &\quad  \limsup_{T\to\infty}\,
\frac{1}{T}\, \Exp\Bigl[\int_0^T r(X_t, U_t, \eta_t)\, \D{t}\Bigr],
\\
\text{subject to} &\quad  \D X_t\;=\;
b(X_t, U_t)\, \D{t} + \upsigma(X_t)\, \D{W_t}\,,\quad \text{law of~} X_0=\eta(0)\,.
\end{split}
\end{equation}

\item[(b)] Find an optimal control $U^*$ for the above control problem, and let
$\eta^*$ denote the law of the
state dynamics under the optimal control $U^*$.

\item[(c)] Find a fixed point of the map $\eta\mapsto \eta^*$.
\end{itemize}
The above model can be interpreted as follows:
there is a single representative agent whose reward function is
effected by an environment distribution (coming from the large
number of agents),
and the state process of the representative can not influence the environment
while solving its own optimization problem.
Moreover, since all agents have identical dynamics
and the same objective function,
the distribution of the state process of the representative agent should
agree with the environment distribution.
We observe that the above problem is not a \emph{typical} optimal control problem.
The cost function here is not being optimized over all possible pairs $(X, \eta)$
where $X_t$ has distribution $\eta_t$ and $X$ satisfies the
dynamics in \eqref{Intro-1}.
This later class of problems
are known as mean field type control problems \cite{ben-fre-yam}. 

There are three major issues of interest in mean field games,
(1) existence and uniqueness of solutions of MFG,
(2) long time behavior of the finite horizon MFG, and
(3) establishing rigorous connection of $N$-player games
with MFG. The topic in (3) can also be divided in two parts:
(3a) convergence of a $N$-player game Nash equilibria
to a MFG solution, and (3b) construction of $\varepsilon$-Nash equilibria
for the $N$-player game from a MFG solution.
The chief goal of this article is to answer the questions in (1), (2) and (3a)
for the class of models we consider.

During the last decade many papers have been devoted to the study of the topics
above.
Existence of mean field game solutions with ergodic cost for a compact state space 
is studied in \cite{lasry-lions, feleqi}.
For existence of mean
field game solutions for finite horizon control problems we refer the reader to
\cite{carmona-lacker, lacker, carmona-delarue, bensou-13}.
These papers also allow the drift to depend on the environment distribution $\eta$.
The existence problem for finite state processes
is addressed in \cite{gomes-mohr-souza, gomes-2, gueant-15},
and  a more general class of discrete
time Markov processes to study the existence result when the cost is ergodic
is considered in \cite{biswas}.
Linear-Quadratic mean field games with ergodic costs are
considered in \cite{bardi-priuli},  and existence results are established.
However there is not much improvement as far as uniqueness in concerned.
A $L^2$ type monotonicity condition (or a variant of it) is generally used to
claim uniqueness of the mean field game solution
(see \cite{lasry-lions, cardaliaguet}).

In Section~\ref{S-existence} we study the existence of MFG solutions.
We show that existence of MFG solutions is 
related to the existence of
$(V, \Tilde\varrho, \mu)\in\calC^2(\Rd)\times\R_+\times\calP(\Rd)$ 
satisfying the following coupled equations
(see Theorems~\ref{T2.1} and \ref{T3.2})
\begin{align}
\min_{u\in\Act}\;\bigl[L^u V(x) + r(x,u, \mu)\bigr]
&\;=\;  L^v\, V + r\bigl(x,v(x), \mu\bigr) \;=\;
\Tilde\varrho\quad \text{a.\ e.}\; x\in\Rd\,,\label{Intro-2}\\
\int_{\Rd} L^v f(x)\, \mu(\D{x}) & \,=\, 0 \qquad \forall \; f\in\calC^2_c(\Rd).
\label{Intro-3}
\end{align}
Here $L^u$ (see \eqref{E2.3}) denotes the controlled extended generator
of the controlled diffusion in \eqref{Intro-1}.
As well known, \eqref{Intro-2} is the Hamilton--Jacobi--Bellman (HJB)
equation for an optimal ergodic control
problem with running cost function $(x, u)\mapsto r(x, u, \mu)$,
whereas \eqref{Intro-3} characterizes $\mu$ as the invariant
probability measure corresponding
to an optimal (stationary) Markov control $v$ of \eqref{Intro-2}.
We use convex analytic tools (see Section~\ref{S-convex}) and the
Kakutani--Fan--Glicksberg fixed point theorem to establish existence of a solution to
\eqref{Intro-2}--\eqref{Intro-3}.

One may also consider a finite horizon problem (say, with time horizon $T$)
for the mean field model.
In this situation the solution is again determined by two coupled equations,
where one equation depicts the evolution of transition density (or
transition probability)
and the other one is the HJB for the finite horizon optimal control problem.
For a model with  a compact state space,
it is shown in \cite{Cardaliaguet12, CLLP-13} that, as $T\to\infty$,
the solution of the finite horizon control problem
tends to the solution of \eqref{Intro-2}--\eqref{Intro-3} under
suitable normalization.
In Section~\ref{S-RVI} we study the analogous problem for our model.
We compensate for the non-compactness of the state space by imposing
a Lyapunov stability hypothesis to control behavior at infinity.
We show that as the horizon $T\to\infty$,
the law of the process for the finite horizon MFG tends to a stationary
law with marginals $\mu$
(see \eqref{Intro-3}), and the value function
of the finite horizon problem, suitably normalized, tends to $V$ in \eqref{Intro-2},
uniformly over compact sets
(see Theorems~\ref{Tn4.3} and~\ref{Tn4.4} for details).

Next we discuss topic (3). As stated earlier there are several papers
in which construction of approximate Nash equilibria is done using a MFG solution.
In fact, a similar construction is also possible in our set up as well.
However, the opposite direction is probably more natural and interesting
\cite[Remark~3.9]{cardaliaguet}.
Existence of Nash equilibria for $N$-player games with ergodic cost,
and convergence to them is studied in \cite{feleqi}, for a model with
compact state space and a running cost function that
has a special separable structure.
Recently, \cite{lacker-15, fischer} have addressed the same question
(assuming existence of approximate Nash equilibria) for a general class of
finite horizon control problems where the drift $b$ and the
diffusion matrix $\upsigma$ may also depend on $\mu^N$.
The approach in these papers uses the martingale formulation,
and the method of weak
convergence.
Under suitable conditions, and for finite horizon control problems,
it is established in \cite{lacker-15, fischer} that a
certain type of \emph{averages} of approximate Nash equilibria are tight and
their subsequential limits are a solution for the MFG problem.
The results in Section~\ref{S-Nplayer} are quite similar to that of \cite{feleqi}
(compare Theorem~\ref{T5.2} with \cite[Theorem~2]{feleqi}).
Since the state space is not compact, we work under
the assumption of geometric stability.
Also we impose fairly general hypotheses on the running cost function,
which are satisfied by a large class of
functions (Assumption~\ref{A5.3}).
For the analysis, we have borrowed several results from \cite{ari-bor-ghosh}.
The representation formula of the
ergodic value function
is shown to be quite useful in proving Theorem~\ref{T5.2}.
Let us also mention that the convergence results for 
Nash equilibria we present
are somewhat stronger than those obtained in \cite{lacker-15, fischer}.
In fact, we show that the maximum distance between the invariant measures in
the  Nash equilibrium tuple tends to $0$ as the
number of players increases to infinity (see Theorem~5.2\,(b)).

Summarizing our contributions in this article, we
\begin{itemize}
\item[--] establish existence of MFG solutions for a large class of mean field games;
\item[--] prove the convergence of the finite horizon MFG solution to
the stationary one, under the assumption of geometric stability;
\item[--] study the existence of Nash equilibria for $N$-player games and prove
that they converge to a MFG solution.
\end{itemize}

The organization of the paper is as follows:
In Section~\ref{S-existence} we introduce the model and the basic assumptions,
and state the main result (Theorem~\ref{T2.1}) on the
existence of MFG solutions.
Various convex analytic results
are in Section~\ref{S-convex},  where we also prove
the main results.
In Section~\ref{S-RVI}
we study the long time behavior of the finite horizon problem.
Finally, in Section~\ref{S-Nplayer} we show existence
of Nash equilibria for the $N$-player games and study
their convergence to MFG solutions.

\subsection{Notation}\label{S-notation}
The standard Euclidean norm in $\Rd$ is denoted by $\abs{\,\cdot\,}$.
The set of nonnegative real numbers is denoted by $\R_{+}$,
$\NN$ stands for the set of natural numbers, and $\Ind$ denotes
the indicator function.
The interior, closure, the boundary and the complement
of a set $A\subset\Rd$ are denoted
by $A^o$, $\overline{A}$, $\partial{A}$ and $A^{c}$, respectively. 
The open ball of radius $R$ around $0$ is denoted by $B_{R}$.
Given two real numbers $a$ and $b$, the minimum (maximum) is denoted by $a\wedge b$ 
($a\vee b$), respectively.
By $\delta_{x}$ we denote the Dirac mass at $x$.

For a continuous function $g\,\colon\,\Rd\to[1,\infty)$ 
we let $\order(g)$ denote the space of Borel measurable functions
$f\,\colon\,\Rd\to\RR$ satisfying
$\esssup_{x\in\Rd}\;\frac{\abs{f(x)}}{g(x)}<\infty$, and
by $\sorder(g)$ those functions satisfying
$\limsup_{R\to\infty}\;\esssup_{x\in B_{R}^{c}}\,\frac{\abs{f(x)}}{g(x)}\;=\;0$.
We also let 
$\Cc_{g}(\Rd)$ denote the Banach space of continuous functions under the norm
\begin{equation*}
\norm{f}_{g} \;\df\;
\sup_{x\in\Rd}\;\frac{\abs{f(x)}}{g(x)}\,.
\end{equation*}
For two nonnegative functions $f$ and $g$, we use the notation $f\sim g$
to indicate that $f\in\order(1+g)$ and $g\in\order(1+f)$.

We denote by $L^{p}_{\mathrm{loc}}(\Rd)$, $p\ge 1$, the set of
real-valued functions
that are locally $p$-integrable and by
$\Sobl^{k,p}(\Rd)$ the set of functions in $L^{p}_{\mathrm{loc}}(\Rd)$
whose $i$-th weak derivatives, $i=1,\dotsc,k$, are in
$L^{p}_{\mathrm{loc}}(\Rd)$.
The set of all bounded continuous functions is denoted by $\calC_{b}(\Rd)$.
By $\calCl^{k,\alpha}(\Rd)$ we denote the set of functions that are
$k$-times continuously differentiable and whose $k$-th derivatives are locally
H\"{o}lder continuous with exponent $\alpha$.
We define $\calC^{k}_{b}(\Rd)$, $k\ge 0$, as the set of functions
whose $i$-th derivatives, $i=1,\dotsc,k$, are continuous and bounded
in $\Rd$ and denote by
$\calC^{k}_{c}(\Rd)$ the subset of $\calC^{k}_{b}(\Rd)$ with compact support.
Given any Polish space $\calX$, we denote by $\cB(\calX)$ its Borel $\sigma$-field,
by $\calP(\calX)$ the set of
probability measures on $\cB(\calX)$ and $\calM(\calX)$ the set of all
bounded signed measures on $\cB(\calX)$.
For $\nu\in\calP(\calX)$ and a Borel measurable map $f\colon\calX\to\R$,
we often use the abbreviated notation
$$\nu(f)\;\df\; \int_{\calX} f\,\D{\nu}\,.$$
The  space of all continuous maps from
$[0, \infty)$ to $\calX$ is denoted by $\calC([0, \infty), \calX)$.
The law of a random variable $X$ is denoted by $\Law(X)$.
For presentation purposes the time variable appears
as a subscript for the diffusion process $X$. 
Also $\kappa_{1},\kappa_{2},\dotsc$ and $C_{1},C_{2},\dotsc$
are used as generic constants whose values might vary from place to place.

\section{Existence of solutions to MFG}\label{S-existence}
\subsection{Controlled diffusions}
The dynamics are modeled by a
controlled diffusion process $X = \{X_{t},\;t\ge0\}$
taking values in the $d$-dimensional Euclidean space $\Rd$, and
governed by the It\^o stochastic differential equation
\begin{equation}\label{E-sde}
\D{X}_{t} \;=\;b(X_{t},U_{t})\,\D{t} + \upsigma(X_{t})\,\D{W}_{t}\,.
\end{equation}
All random processes in \eqref{E-sde} live in a complete
probability space $(\Omega,\sF,\Prob)$.
The process $W$ is a $d$-dimensional standard Wiener process independent
of the initial condition $X_{0}$.
The control process $U$ takes values in a compact metric space
$(\Act, d_\Act)$, and
$U_{t}(\omega)$ is jointly measurable in
$(t,\omega)\in[0,\infty)\times\Omega$.
Moreover, it is \emph{non-anticipative}:
for $s < t$, $W_{t} - W_{s}$ is independent of
\begin{equation*}
\sF_{s} \;\df\;\text{the completion of~} \sigma\{X_{0},U_{r},W_{r},\;r\le s\}
\text{~relative to~}(\sF,\Prob)\,.
\end{equation*}
Such a process $U$ is called an \emph{admissible control},
and we let $\Uadm$ denote the set of all admissible controls.

We impose the following standard assumptions on the drift $b$
and the diffusion matrix $\upsigma$
to guarantee existence and uniqueness of solutions to \eqref{E-sde}.
\begin{itemize}
\item[{(A1)}]
\emph{Local Lipschitz continuity:\/}
The functions
\begin{equation*}
b\;=\;\bigl[b^{1},\dotsc,b^{d}\bigr]\transp\,\colon\,\Rd \times\Act\to\Rd
\quad\text{and}\quad
\upsigma\;=\;\bigl[\upsigma^{ij}\bigr]\,\colon\,\Rd\to\R^{d\times d}
\end{equation*}
are locally Lipschitz in $x$ with a Lipschitz constant $C_{R}>0$ depending on
$R>0$.
In other words,
for all $x,y\in B_{R}$ and $u\in\Act$,
\begin{equation*}
\abs{b(x,u) - b(y,u)} + \norm{\upsigma(x) - \upsigma(y)}
\;\le\;C_{R}\,\abs{x-y}\qquad \forall\,x,y\in B_{R}\,.
\end{equation*}
We also assume that $b$ is continuous in $(x,u)$.
\smallskip
\item[{(A2)}]
\emph{Affine growth condition:\/}
$b$ and $\upsigma$ satisfy a global growth condition of the form
\begin{equation*}
\abs{b(x,u)}^{2}+ \norm{\upsigma(x)}^{2}\;\le\;C_{1}
\bigl(1 + \abs{x}^{2}\bigr) \qquad \forall (x,u)\in\Rd\times\Act\,,
\end{equation*}
where $\norm{\upsigma}^{2}\;\df\;
\mathrm{trace}\left(\upsigma\upsigma\transp\right)$.
\smallskip
\item[{(A3)}]
\emph{Local nondegeneracy:\/}
For each $R>0$, it holds that
\begin{equation*}
\sum_{i,j=1}^{d} a^{ij}(x)\xi_{i}\xi_{j}
\;\ge\;C^{-1}_{R}\abs{\xi}^{2} \qquad\forall x\in B_{R}\,,
\end{equation*}
for all $\xi=(\xi_{1},\dotsc,\xi_{d})\transp\in\Rd$,
where $a\df \frac{1}{2}\upsigma \upsigma\transp$.
\end{itemize}

In integral form, \eqref{E-sde} is written as
\begin{equation}\label{E2.2}
X_{t} \;=\;X_{0} + \int_{0}^{t} b(X_{s},U_{s})\,\D{s}
+ \int_{0}^{t} \upsigma(X_{s})\,\D{W}_{s}\,.
\end{equation}
The third term on the right hand side of \eqref{E2.2} is an It\^o
stochastic integral.
We say that a process $X=\{X_{t}(\omega)\}$ is a solution of \eqref{E-sde},
if it is $\sF_{t}$-adapted, continuous in $t$, defined for all
$\omega\in\Omega$ and $t\in[0,\infty)$, and satisfies \eqref{E2.2} for
all $t\in[0,\infty)$ a.s.
It is well known that under (A1)--(A3), for any admissible control
there exists a unique solution of \eqref{E-sde}
\cite[Theorem~2.2.4]{ari-bor-ghosh}.

We define the family of operators
$L^{u}\colon\calC^{2}(\Rd)\to\calC(\Rd)$,
where $u\in\Act$ plays the role of a parameter, by
\begin{equation}\label{E2.3}
L^{u} f(x) \;\df\; a^{ij}(x)\,\partial_{ij} f(x)
+ b(x,u)\cdot \grad f(x)\,,\quad(x,u)\in\Rd\times\Act\,.
\end{equation}
We refer to $L^{u}$ as the \emph{controlled extended generator} of
the diffusion.
In \eqref{E2.3} and elsewhere in this paper we have adopted
the notation
$\partial_{t}\df\tfrac{\partial}{\partial{t}}$,
$\partial_{i}\df\tfrac{\partial~}{\partial{x}_{i}}$, and
$\partial_{ij}\df\tfrac{\partial^{2}~}{\partial{x}_{i}\partial{x}_{j}}$.
We also use the standard summation rule that
repeated subscripts and superscripts are summed from $1$ through $d$.
In other words, the right hand side of \eqref{E2.3} stands for
\begin{equation*}
\sum_{i,j=1}^{d}
a^{ij}(x)\,\frac{\partial^{2}f}{\partial{x}_{i}\partial{x}_{j}}(x)
+\sum_{i=1}^{d} b^{i}(x,u)\, \frac{\partial f }{\partial{x}_{i}}(x)\,.
\end{equation*}
Of fundamental importance in the study of functionals of $X$ is
It\^o's formula.
For $f\in\calC^{2}(\Rd)$ and with $L^{u}$ as defined in \eqref{E2.3},
it holds that
\begin{equation*}
f(X_{t}) \;=\;f(X_{0}) + \int_{0}^{t}L^{U_{s}} f(X_{s})\,\D{s}
+ M_{t}\,,\quad\text{a.s.},
\end{equation*}
where
\begin{equation*}
M_{t} \;\df\;\int_{0}^{t}\bigl\langle\nabla f(X_{s}),
\upsigma(X_{s})\,\D{W}_{s}\bigr\rangle
\end{equation*}
is a local martingale.

Recall that a control is called \emph{Markov} if
$U_{t} = v(t,X_{t})$ for a measurable map $v\colon\R_{+}\times\R^{d}\to \Act$,
and it is called \emph{stationary Markov} if $v$ does not depend on
$t$, i.e., $v\colon\R^{d}\to \Act$.
Correspondingly \eqref{E-sde}
is said to have a \emph{strong solution}
if given a Wiener process $(W_{t},\sF_{t})$
on a complete probability space $(\Omega,\sF,\Prob)$, there
exists a process $X$ on $(\Omega,\sF,\Prob)$, with $X_{0}$
as specified by the initial condition,
which is continuous,
$\sF_{t}$-adapted, and satisfies \eqref{E2.2} for all $t$ a.s.
A strong solution is called \emph{unique},
if any two such solutions $X$ and $X'$ agree
$\Prob$-a.s., when viewed as elements of $\calC\bigl([0,\infty),\R^{d}\bigr)$.
It is well known that under Assumptions (A1)--(A3),
for any Markov control $v$,
\eqref{E-sde} has a unique strong solution \cite{Gyongy-96}.

Let $\Usm$ denote the set of stationary Markov controls.
Under $v\in\Usm$, the process $X$ is strong Markov,
and we denote its transition function by $P_{t}^{v}(x,\cdot\,)$.
It also follows from the work of \cite{Bogachev-01,Stannat-99} that under
$v\in\Usm$, the transition probabilities of $X$
have densities which are locally H\"older continuous.
Thus $L^{v}$ defined by
\begin{equation*}
L^{v} f(x) \;\df\; a^{ij}(x)\,\partial_{ij} f(x)
+ b^{i} \bigl(x,v(x)\bigr)\,\partial_{i} f(x)\,,\quad v\in\Usm\,,
\end{equation*}
for $f\in\calC^{2}(\R^{d})$,
is the generator of a strongly-continuous
semigroup on $\calC_{b}(\R^{d})$, which is strong Feller.
We let $\Prob_{x}^{v}$ denote the probability measure and
$\Exp^{v}_{x}$ the expectation operator on the canonical space of the
process under the control $v\in\Usm$, conditioned on the
process $X$ starting from $x\in\R^{d}$ at $t=0$.
The expectation operator $\Exp^{U}_{x}$ is of course
also well defined for $U\in\Uadm$.

Recall that control $v\in\Usm$ is called \emph{stable}
if the associated diffusion is positive recurrent.
We denote the set of such controls by $\Ussm$,
and let $\mu_{v}$ denote the unique invariant probability
measure on $\R^{d}$ for the diffusion under the control $v\in\Ussm$.
Recall that $v\in\Ussm$ if and only if there exists an inf-compact function
$\calV\in\calC^{2}(\R^{d})$, a bounded domain $D\subset\R^{d}$, and
a constant $\varepsilon>0$ satisfying
\begin{equation*}
L^{v}\calV(x) \;\le\;-\varepsilon
\qquad\forall x\in D^{c}\,.
\end{equation*}
We denote by $\uptau(A)$ the \emph{first exit time} of a process
$\{X_{t}\,,\;t\in\R_{+}\}$ from a set $A\subset\R^{d}$, defined by
\begin{equation*}
\uptau(A) \;\df\;\inf\;\{t>0\,\colon\,X_{t}\not\in A\}\,.
\end{equation*}
The open ball of radius $R$ in $\R^{d}$, centered at the origin,
is denoted by $B_{R}$, and we let $\uptau_{R}\;\df\;\uptau(B_{R})$,
and $\tc_{R}\df \uptau(B^{c}_{R})$.

\subsection{Topologies on $\calP(\Rd)$}
We endow the space $\calP(\Rd)$ with the Prokhorov metric $\dd$ that renders
$\calP(\Rd)$ the topology of weak convergence.
As is well known this is defined by
\begin{equation}\label{Edd}
\dd(\mu_{1}, \mu_{2})\;\df\; \inf\bigl\{\eps :\, \eps\ge 0,
\text{~such that for all Borel~} F\subset\Rd, \,
\mu_{1}(F)\le \mu_{2}(F^\eps) + \eps\bigr\}\,.
\end{equation}
It is well known that $(\calP(\Rd),\dd)$ is a Polish space and $\dd(\mu_n, \mu)\to 0$
as $n\to\infty$ if and only if, for every $f\in\calC_b(\Rd)$, we have
$\mu_n(f)\to\mu(f)$ as $n\to\infty$.
By $\calP_p(\Rd)$, $p\ge 1$, we denote the subset of $\calP (\Rd)$
containing all probability measures $\mu$ with the property that
$\int_{\Rd}\abs{x}^p\mu(\D{x})<\infty$.
The Wasserstein metric on $\calP_p(\Rd)$ is defined as follows:
\begin{equation}\label{EDp}
\mathfrak{D}_p(\mu_{1}, \mu_{2})\df \inf\;\biggl\{\int_{\Rd\times\Rd}
\abs{x-y}^p \nu(\D{x}, \D{y})\, :\, \nu\in\calP(\Rd\times\Rd)
\text{~has marginals~}
\mu_{1}, \, \mu_{2}\biggr\}^{\nicefrac{1}{p}}.
\end{equation}
It is well known that
$(\calP_p(\Rd), \eD_p)$, $p\ge 1$, is a Polish space.
The topology generated by $\eD_p$ on $\calP_p(\Rd)$ is finer than the one induced
by $\dd$.
In fact, we have
the following assertion \cite[Theorem~7.12]{villani}.
\begin{proposition}\label{P-villani}
Let $\{\mu_n\}_{n\in\NN}$ be a sequence of probability measures in $\calP_p(\Rd)$,
and let $\mu\in\calP(\Rd)$. Then, the following statements are equivalent:
\begin{enumerate}
\item $\eD_{p}(\mu_n, \mu)\to 0$, as $n\to\infty$.
\smallskip

\item $\dd(\mu_n, \mu)\to 0$ as $n\to\infty$, and 
$$\int_{\Rd}\abs{x}^p \mu_n(\D{x})\;\xrightarrow[n\to\infty]{}\;
\int_{\Rd}\abs{x}^p \mu(\D{x})\,.$$

\item $\dd(\mu_n, \mu)\to 0$ as $n\to\infty$,
and  $\{\mu_n\}$ satisfies the following condition: 
$$\lim_{R\to\infty} \limsup_{n\to\infty}\int_{B^c_R(0)} \abs{x}^p \, \D{\mu_n}
\; =\; 0\,.$$
\end{enumerate}
\end{proposition}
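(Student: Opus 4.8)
The plan is to establish the cyclic chain of implications $(1)\Rightarrow(2)\Rightarrow(3)\Rightarrow(1)$. Throughout I will use that $\eD_p$ is a metric on $\calP_p(\Rd)$ (so the triangle inequality is available, via the gluing lemma) and that its topology is finer than the $\dd$-topology, as recorded above; in addition I will invoke the classical fact that on a \emph{compact} set $K\subset\Rd$ the topology of weak convergence on $\calP(K)$ coincides with the $\eD_p$-topology — there all $p$-th moments are uniformly bounded, and the equivalence follows, e.g., from the portmanteau theorem or Skorokhod's representation. For $(1)\Rightarrow(2)$, the convergence $\dd(\mu_n,\mu)\to0$ is immediate from the comparison of topologies. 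For the moments, I would note that for every $\nu\in\calP_p(\Rd)$ the only coupling of $\nu$ with $\delta_0$ is the product measure, whence $\eD_p(\nu,\delta_0)=\bigl(\int_{\Rd}\abs{x}^p\,\nu(\D x)\bigr)^{1/p}$; in particular $\mu\in\calP_p(\Rd)$ by the triangle inequality, and since
\[
\Babs{\eD_p(\mu_n,\delta_0)-\eD_p(\mu,\delta_0)}\;\le\;\eD_p(\mu_n,\mu)\;\xrightarrow[n\to\infty]{}\;0\,,
\]
the $p$-th moments of $\mu_n$ converge to that of $\mu$.

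For $(2)\Rightarrow(3)$, fix for each $R>0$ a continuous cutoff $\chi_R\colon\Rd\to[0,1]$ with $\chi_R\equiv1$ on $B_R$ and $\chi_R\equiv0$ on $B_{2R}^c$. Since $x\mapsto\abs{x}^p\chi_R(x)$ is bounded and continuous, $\dd$-convergence gives $\int_{\Rd}\abs{x}^p\chi_R(x)\,\mu_n(\D x)\to\int_{\Rd}\abs{x}^p\chi_R(x)\,\mu(\D x)$; subtracting this from the assumed convergence of the full $p$-th moments yields
\[
\limsup_{n\to\infty}\int_{B_R^c}\abs{x}^p\,\mu_n(\D x)\;\le\;\int_{\Rd}\abs{x}^p\bigl(1-\chi_R(x)\bigr)\,\mu(\D x)\;\le\;\int_{B_R^c}\abs{x}^p\,\mu(\D x)\,,
\]
and, $\mu$ being in $\calP_p(\Rd)$ (which is immediate from (2)), the right-hand side tends to $0$ as $R\to\infty$ by dominated convergence.

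The remaining implication $(3)\Rightarrow(1)$ is the one with genuine content, and I expect it to be the main obstacle; the idea is to truncate in order to reduce to the compact case. Let $T_R(x)\df x$ for $\abs{x}\le R$ and $T_R(x)\df R\,x/\abs{x}$ for $\abs{x}>R$, which is $1$-Lipschitz. Coupling $\nu$ with its pushforward $(T_R)_*\nu$ through $X\mapsto(X,T_R(X))$ gives $\eD_p\bigl(\nu,(T_R)_*\nu\bigr)\le\bigl(\int_{B_R^c}\abs{x}^p\,\nu(\D x)\bigr)^{1/p}$ for every $\nu\in\calP_p(\Rd)$, so the triangle inequality yields
\[
\eD_p(\mu_n,\mu)\;\le\;\Bigl(\int_{B_R^c}\abs{x}^p\,\mu_n(\D x)\Bigr)^{1/p}+\eD_p\bigl((T_R)_*\mu_n,(T_R)_*\mu\bigr)+\Bigl(\int_{B_R^c}\abs{x}^p\,\mu(\D x)\Bigr)^{1/p}\,.
\]
The measures $(T_R)_*\mu_n$ and $(T_R)_*\mu$ are supported on the compact ball $\overline{B_R}$ and converge weakly (by $\dd$-convergence of $\mu_n$ to $\mu$ and continuity of $T_R$), so the middle term tends to $0$ by the compact-space equivalence above. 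Letting first $n\to\infty$ and then $R\to\infty$, the first term is controlled by hypothesis (3) and the last one by $\mu\in\calP_p(\Rd)$ — which follows from (3), since weak convergence together with Fatou's lemma and the uniform tail bound force $\mu$ to have a finite $p$-th moment. This proves $\eD_p(\mu_n,\mu)\to0$ and closes the cycle. Apart from the bookkeeping, the only substantive ingredient is this reduction to compact sets; alternatively, one can dispense with the truncation and argue via Skorokhod's representation theorem, realizing $\mu_n$ and $\mu$ on a common probability space with a.s.\ convergent representatives $X_n\to X$ and then invoking Vitali's convergence theorem, the uniform integrability of $\{\abs{X_n}^p\}_n$ being precisely a restatement of (3).
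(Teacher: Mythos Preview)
Your proof is correct and self-contained. The paper, however, does not prove this proposition at all: it simply quotes it as \cite[Theorem~7.12]{villani} and moves on. So there is nothing to compare at the level of argument---you have supplied a full proof where the paper only gives a reference. Your cyclic scheme $(1)\Rightarrow(2)\Rightarrow(3)\Rightarrow(1)$ is the standard one (and is essentially how Villani organizes it), with the truncation $T_R$ reducing $(3)\Rightarrow(1)$ to the compact case being the only step with real content, exactly as you identify. One minor remark: in $(1)\Rightarrow(2)$ you invoke the triangle inequality for $\eD_p$ with $\mu$ before having established $\mu\in\calP_p(\Rd)$; strictly speaking you should first observe that $\eD_p(\mu_n,\mu)<\infty$ for large $n$ forces $\mu\in\calP_p(\Rd)$ (any coupling with finite $p$-cost together with $\mu_n\in\calP_p$ gives this via $\abs{y}^p\le 2^{p-1}(\abs{x}^p+\abs{x-y}^p)$), and only then appeal to the metric structure. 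This is a one-line fix and does not affect the argument.
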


Therefore, a set $\calK$ which is compact in $(\calP_p(\Rd), \eD_p)$
is also compact in $(\calP_p(\Rd),\dd)$.
In the rest of the paper $\calP_p(\Rd)$ and $\calP(\Rd)$ are always meant
to be metric spaces endowed with the metrics $\eD_p$ and $\dd$,
respectively, unless mentioned otherwise.

\subsection{The ergodic control problem}
In this paper we consider dynamics as in \eqref{E-sde} and associated
running cost functions belonging to one of the three classes described
in Assumption~\ref{A2.1} below.
We use the notation $r_{\mu}(x,u)\df r(x,u,\mu)$.
Also we write $r_{\mu}\in\sorder(h)$ for $h:\Rd\times\Act\to\RR_{+}$,
provided
\begin{equation*}
\limsup_{\abs{x}\to\infty}\;\sup_{u\in\Act}\;
\frac{\abs{r_{\mu}(x,u)}}{1+h(x,u)}\;=\;0\,.
\end{equation*}

\begin{assumption}\label{A2.1}
One of the following conditions holds:
\begin{itemize}
\item[{(C1)}]
The running cost
$r:\Rd\times\Act\times\calP(\Rd)\to \R_+$ is continuous,
 and for each $\mu\in\calP(\Rd),\, r_{\mu}(\cdot\,, \cdot)$ is locally 
Lipschitz in its first argument uniformly with respect to the second.
Moreover, for any compact subset $\calK$ of $\calP(\Rd)$
there exists
$\theta>0$ such that
\begin{equation}\label{A2.1a}
\liminf_{\abs{x}\to\infty}\;\inf_{u\in\Act}\;
\frac{r(x,u,\mu)}{r(x,u,\mu')}\;>\;\theta\qquad \forall\,\mu,\mu'\in\cK\,,
\end{equation}
and
\begin{equation}\label{A2.1b}
\inf_{(u,\mu)\,\in\,\Act\times\cK}\, r(x, u,\mu)\;\xrightarrow[\abs{x}\to\infty]{}\;
\infty\,.
\end{equation}

\item[{(C2)}]
The running cost takes the form
$r_{\mu}(x,u)= \mathring r(x,u) + F(x,\mu)$,
where $F: \Rd\times\calP_p(\Rd)\to \R_+$ is a continuous function,
satisfying
$$F(x,\mu) \le \kappa_{0}\,\biggl(1+\abs{x}^p+\int_{\Rd}\abs{x}^p\,\mu(\D{x})\biggr)
\qquad \forall\,(x,\mu)\in\Rd\times\calP_p(\Rd)\,,
$$
for some constant $\kappa_{0}$ and $p\ge1$.
Also, $\mathring r:\Rd\times\Act\to\R_+$ is continuous and locally Lipschitz
in $x$ uniformly in $u\in\Act$,
and satisfies
$$\min_{u\in\Act}\,\frac{\mathring r(x,u)}{1+\abs{x}^p}
\;\xrightarrow[\abs{x}\to\infty]{}\;\infty\,.$$

\item[{(C3)}]
The running cost $r:\Rd\times\Act\times\calP(\Rd)\to \R_+$ is continuous, and 
$x\mapsto r(x, u, \mu)$ is locally Lipschitz
uniformly in $u\in\Act$ and $\mu$ in  compact subsets of $\calP(\Rd)$.
Also
\begin{itemize}
\item[(C3a)]
There exist inf-compact functions $\calV\in\Cc^{2}(\Rd)$
and $h\in\Cc(\Rd\times\Act)$ such that 
for some positive constants $c_{1}$ and $c_{2}$ we have 
\begin{equation}\label{US}
L^u\calV(x) \; \le \; c_{1} - c_{2} h(x,u)
\qquad \forall (x,u)\in\Rd\times\Act\,,
\end{equation}
\item[(C3b)]
For any compact $\calK\subset\calP(\Rd)$ it holds that
$$\sup_{\mu\in\calK}\; r_{\mu}\in \sorder(h)\,.$$
\end{itemize}
\end{itemize}
\end{assumption}

A typical example of $F$ in (C2) is $F(x,\mu)=\int \abs{x-y}^p\, \mu(\D{y})$.
Also $r(x,u, \mu) = r_{1}(x) + r_{2}(x,u, \mu)$, with 
$r_{2}\in\calC_b\bigl(\Rd\times\Act\times\calP(\Rd)\bigr)$, $r_{1}\in\calCl^{0, 1}(\Rd)$,
and $\lim_{\abs{x}\to\infty} r_{1}(x)=+\infty$ is an example of 
running cost satisfying (C1).
  
The running costs in (C1) and (C2) satisfy the condition of \emph{near monotonicity}
\cite{ari-bor-ghosh}, while \eqref{US} implies
that the controlled diffusion is uniformly stable.
In \cite{lasry-lions, feleqi} cost functions satisfying (C2)
on a compact state space are considered. 
But in the current scenario the state space is $\Rd$ which is not compact.
The cost functions in (C3) are allowed to take more general forms.
Since $\calV$ and $h$ are bounded from below (being inf-compact),
without loss of generality we assume that $\calV\ge 1$, and $h\ge 0$.

In general, $\Act$ may not be a convex set.
It is therefore often useful to enlarge the control set to $\calP(\Act)$.
To do so, for  $v\in\calP(\Act)$ we replace the drift and the  running cost with
\begin{equation}\label{relax}
\Bar{b}(x,v)\;\df\;\int_\Act b(x,u)\,v(\D{u})\,,\quad \text{and}\quad
\Bar{r}(x,v, \mu)\;\df\;\int_\Act r(x,u, \mu)\,v(\D{u})\,.
\end{equation}
It is easy to see that $\Bar{b}$ satisfies (A1)--(A2),
while and running cost $\Bar{r}$ inherits the properties
in Assumption~\ref{A2.1} from $r$.
In what follows we assume that
all the controls take values in $\calP(\Act)$.
These controls are generally referred to as \emph{relaxed} controls.
We endow the set of relaxed stationary Markov controls with the following
topology: $v_{n}\to v$ in $\Usm$ if and only if
$$\int_{\R^{d}}f(x)\int_\Act g(x,u)v_{n}(\D{u}\,|\, x)\,\D{x}
\;\xrightarrow[n\to\infty]{}\;
\int_{\R^{d}}f(x)\int_\Act g(x,u)v(\D{u}\,|\, x)\,\D{x}\,,$$
for all $f\in L^{1}(\R^{d})\cap L^{2}(\R^{d})$
and $g\in\calC_{b}(\R^{d}\times \Act)$.
Then $\Usm$ is a compact metric space under this topology
\cite[Section~2.4]{ari-bor-ghosh}.
We refer to this topology as the \emph{topology of Markov controls}.
A control is said to be \emph{precise/strict} if it takes values in $\Act$.
It is easy to see that any precise control $U_{t}$ can also be understood
as a relaxed control by $U_{t}(\D{u})=\delta_{U_{t}}$.
Abusing the notation we denote the drift and running cost by $b$ and $r$,
respectively, and the action of a relaxed control
on them is understood as in \eqref{relax}.

Now we introduce the control problem.
Let $\eta\in\calC\bigl([0, \infty),\calP(\Rd)\bigr)$. We define the ergodic cost
as follows
\begin{equation}\label{EJx}
J_x(U,\eta)\;\df\; \limsup_{T\to\infty}\, \frac{1}{T}\,
\Exp^U_x\biggl[\int_0^T r(X_t, U_t, \eta_t) \,\D{t}\biggr]\,, \quad U\in\Uadm\,,
\quad x\in\Rd\,.
\end{equation}
Let
\begin{equation*}
\varrho_{\eta}(x) \;\df\; \inf_{U\in\Uadm}\; J_x(U,\eta)\,.
\end{equation*}

\begin{definition}\label{Defi-2.1}
$\eta\in\calC([0, \infty), \calP(\Rd))$ is said to be a Mean Field Game
(MFG) solution starting at $x\in\Rd$
if there exists an admissible control $v$ such that
\begin{align*}
\D{X_t} \; &=\; b(X_t, v_t)\, \D{t} + \upsigma(X_t)\, \D{W_t}\,,\\
\Law(X_t)\; &=\; \eta_t\,,\quad X_{0}=x\,,
\end{align*}
and $J_x(U,\eta)\ge J_x(v, \eta)$ for all admissible $U$.
We say the MFG solution is relaxed (strict) if the control $v$ is
a stationary Markov control
taking values in $\calP(\Act)$ ($\Act$, respectively).
\end{definition}

One of our main goals in this paper is to establish the existence of MFG solutions. 
First we review some basic facts about ergodic occupation measures
and invariant probability measures for a controlled diffusion
as in \eqref{E-sde}.
The set of all ergodic occupation measures is defined as
\begin{equation}\label{Eeom}
\eG\;\df\;\Bigl\{\uppi\in\calP(\Rd\times\Act)\; :\;
\int_{\Rd\times\Act} L^uf(x)\,\uppi(\D{x},\D{u})=0
\quad \forall\, f\in\calC^2_c(\Rd)\Bigr\}\,.
\end{equation}
By \cite[Lemma~3.2.3]{ari-bor-ghosh} $\eG$ is a closed and convex subset on
$\calP(\Rd\times\Act)$.
Disintegrating an ergodic occupation measure $\uppi$ we write
$\uppi(\D{x},\D{u})=\mu_v(\D{x}) v(\D{u} \,|\, x)$ for some
$\mu_{v}\in\Pm(\Rd)$ and some measurable kernel
$v: \Rd\to\calP(\Act)$.
We use the the notation
$\uppi=\mu_v\cast v$ to denote this disintegration.
It straightforward to verify that $\mu_v$ satisfies
$$\int_{\Rd} L^v f(x)\, \mu_v(\D{x})\; =\; 0 \quad \text{for all~}
f\in\calC^2_c(\Rd)\,,$$
and is therefore an invariant probability measure for the diffusion
controlled by $v$.
It follows that $v\in\Ussm$.
Conversely, if $v\in\Ussm$, then there exists a unique
invariant probability measure for the diffusion
under the control $v\in\Ussm$, and $\uppi_v\df \mu_v\cast v$
is an ergodic occupation measure.

Thus, the set of all invariant probability measures
may be defined as
\begin{equation}\label{Einvm}
\eH\;\df\; \bigl\{\nu\in\calP(\Rd)\; :\; \nu\cast v\in \eG \text{~for some~}
v\in\Usm\bigr\}\,.
\end{equation}
This is a convex subset of $\calP(\Rd)$.
We refer to $\uppi_v$ ($\mu_v$) as the ergodic occupation measure
(invariant probability measure) associated with $v\in\Ussm$.

The sets $\eG$ and $\eH$ play a key role in the analysis
of the ergodic control problem.
In fact, we are going to exhibit MFG solutions associated with
$v\in\Ussm$ and $\uppi\in\eG$ that satisfy the following
\begin{align}\label{Ehjb0}
\uppi \;=\; \mu_v\cast v, \qquad
\min_{u\in\Act}\;\bigl[L^u V_{\mu_v}(x) + r_{\mu_v}(x,u)\bigr]
\;=\; \varrho_{\mu_v}\,, 
\end{align}
for some function $V_{\mu_v}\in\calC^2(\Rd)$. 
Existence results of type \eqref{Ehjb0} is established in Section~\ref{S-convex}.
Such existence result is generally shown using fixed point arguments
\cite{lasry-lions}.
This is also related to the compactness property of $\eH$. 
When the state space is compact, then of course $\eH$ is also compact.
But this is not true in general for non-compact state spaces.
We adopt the following notation.
For any $G\subset\eG$ we let $\eH[G]$ denote the corresponding set of  invariant
measures, i.e.,
$$\eH[G]\;\df\;\{\mu\in\eH\,\colon\,\mu\cast v\in G\text{~for some~} v\in\Ussm\}\,.$$
Consider the following assumption.

\begin{assumption}\label{A2.2}
The following hold:
\begin{itemize}
\item[(i)]
There exist $\mu_{0}\in\eH$ and
$\uppi_0\in\eG$ such that $\uppi_0(r_{\mu_{0}})< \infty$.
\smallskip
\item[(ii)]
For models satisfying Assumption~\ref{A2.1}\,(C1), there exists
a nonempty compact set $\sK\subset\eG$ such that
\begin{equation*}
\uppi(r_{\mu})\;>\; \Tilde\varrho_{\mu}\qquad \forall\,\uppi\in
\eG\cap\sK^{c}\,,
\end{equation*}
and for all $\mu\in\eH$ where $\tilde{\varrho}_\mu= \inf_{\uppi\in\eG}\uppi(r_\mu)$.
\end{itemize}
\end{assumption}

\begin{remark}
Assumption~\ref{A2.2}\,(i) is rather standard in ergodic
control---if it is violated, the problem is vacuous.
Note that Assumption~\ref{A2.2}\,(i) always holds for the model
in (C3) of Assumption~\ref{A2.1}.
Also, Assumption~\ref{A2.2}\,(i) implies that
for  running costs satisfying (C1)--(C2) of Assumption~\ref{A2.1}
it holds that $\uppi_0(r_{\mu})< \infty$ for all $\mu\in\eH$.
\end{remark}

Define $h(p,x,u,\mu)\df p\cdot b(x,u) + r(x,u,\mu)$, $p\in\Rd$.
Our main result of this section is the following.

\begin{theorem}\label{T2.1}
Let Assumptions~\ref{A2.1} and \ref{A2.2} hold.
Then, for any $x\in\Rd$,
there exists a relaxed MFG solution starting at $x$
in the sense of Definition~\ref{Defi-2.1}.
Moreover, if $\Act$ is convex and $u\mapsto h(p, x,u, \mu)$ is strictly convex
for all $x, p\in\Rd$, and $\mu\in\calP(\Rd)$,
then there exists a strict MFG solution.
\end{theorem}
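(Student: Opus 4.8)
The plan is to separate the argument into a fixed-point step and a verification step. Fix $x\in\Rd$. The first step --- which is really the content of the convex-analytic development in Section~\ref{S-convex} (cf.\ Theorem~\ref{T3.2}) --- is to produce a triple $(V,\Tilde\varrho,\mu^{*})\in\calC^{2}(\Rd)\times\R_{+}\times\calP(\Rd)$ and a control $v^{*}\in\Ussm$ with $\uppi^{*}:=\mu^{*}\cast v^{*}\in\eG$ satisfying \eqref{Ehjb0}. Granting this, let $X$ solve \eqref{E-sde} under $v^{*}$ with $X_{0}=x$ and put $\eta_{t}:=\Law(X_{t})=P^{v^{*}}_{t}(x,\cdot\,)$; then $\eta\in\calC([0,\infty),\calP(\Rd))$, $\eta_{0}=\delta_{x}$, and since $v^{*}\in\Ussm$ the laws $\eta_{t}$ converge to $\mu^{*}$ as $t\to\infty$ --- weakly in general, and in $\eD_{p}$ under (C2) once a uniform bound on the $p$-th moments of $\{\eta_{t}\}$ is recorded. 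It then remains to check the two inequalities in Definition~\ref{Defi-2.1}.

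For the fixed-point step I would run a Kakutani--Fan--Glicksberg argument on ergodic occupation measures. For fixed $\mu\in\calP(\Rd)$ (in $\calP_{p}(\Rd)$ under (C2)), near monotonicity of $r_{\mu}$ under (C1)--(C2), or the uniform Lyapunov bound \eqref{US} under (C3), makes the ergodic control problem with running cost $r_{\mu}$ well posed, so by \cite{ari-bor-ghosh} its value equals $\Tilde\varrho_{\mu}:=\inf_{\uppi\in\eG}\uppi(r_{\mu})$, the set $\eG^{*}_{\mu}:=\{\uppi\in\eG:\uppi(r_{\mu})=\Tilde\varrho_{\mu}\}$ is nonempty and each of its elements disintegrates as $\mu_{v}\cast v$ with $v\in\Ussm$, and the HJB equation for $r_{\mu}$ has a solution $V_{\mu}\in\calC^{2}(\Rd)$. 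I would then fix a nonempty convex compact set $\Gam\subset\eG$ --- all of $\eG$ under (C3), which is compact because integrating \eqref{US} gives $\uppi(h)\le c_{1}/c_{2}$ for every $\uppi\in\eG$ with $h$ inf-compact; a compact convex subset of $\eG$ containing the set $\sK$ of Assumption~\ref{A2.2}\,(ii) under (C1); and $\eG$ intersected with a set of measures of uniformly bounded $p$-th moment under (C2) --- and define $\Psi(\uppi):=\eG^{*}_{\mu_{\uppi}}\cap\Gam$, where $\mu_{\uppi}$ denotes the first marginal of $\uppi$. Since $\uppi'\mapsto\uppi'(r_{\mu_{\uppi}})$ is affine and lower semicontinuous, $\Psi(\uppi)$ is convex and compact, and it is nonempty precisely because Assumptions~\ref{A2.1}--\ref{A2.2} force a minimizer of that functional over $\eG$ into $\Gam$. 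A fixed point $\uppi^{*}=\mu^{*}\cast v^{*}$ of $\Psi$, which then satisfies \eqref{Ehjb0}, is obtained once $\Psi$ is shown to have closed graph.

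That closed-graph property is, I expect, the main obstacle. Given $\uppi_{n}\to\uppi$ in $\Gam$, $\uppi'_{n}\in\Psi(\uppi_{n})$ with $\uppi'_{n}\to\uppi'$, one must pass to the limit in $\uppi'_{n}(r_{\mu_{\uppi_{n}}})\le\uppi''(r_{\mu_{\uppi_{n}}})$ for arbitrary $\uppi''\in\eG$, which requires continuity of $\mu\mapsto r_{\mu}$ together with uniform integrability of $\{r_{\mu_{\uppi_{n}}}\}$ against $\{\uppi'_{n}\}$. This is precisely where the hypotheses are decisive: under (C3) the bound $\uppi'_{n}(h)\le c_{1}/c_{2}$ combined with $\sup_{\mu\in\calK}r_{\mu}\in\sorder(h)$ gives it; under (C1) the ratio condition \eqref{A2.1a} sandwiches $r_{\mu_{\uppi_{n}}}$ between fixed multiples of $r_{\mu_{0}}$ outside a large ball while $\uppi'_{n}(r_{\mu_{\uppi_{n}}})=\Tilde\varrho_{\mu_{\uppi_{n}}}\le\uppi_{0}(r_{\mu_{\uppi_{n}}})<\infty$ controls the tails; under (C2) the super-$p$ growth of $\mathring r$ and the $\calP_{p}$-continuity of $\mu\mapsto F(\cdot\,,\mu)$ do the job. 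The same estimates also give continuity of $\mu\mapsto\Tilde\varrho_{\mu}$ and the nonemptiness claimed above.

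Finally I would verify Definition~\ref{Defi-2.1} for $(\eta,v^{*})$; set $\Tilde\varrho:=\Tilde\varrho_{\mu^{*}}=\uppi^{*}(r_{\mu^{*}})$, which is finite. First, $J_{x}(U,\eta)\ge\Tilde\varrho$ for every admissible $U$: if $J_{x}(U,\eta)=\infty$ this is trivial; otherwise, pick $T_{k}\to\infty$ realizing the $\limsup$ and let $\zeta_{k}\in\calP(\Rd\times\Act)$ be the mean occupation measure $\zeta_{k}(g)=\tfrac1{T_{k}}\Exp^{U}_{x}\bigl[\int_{0}^{T_{k}}g(X_{t},U_{t})\,\D t\bigr]$. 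An a priori estimate --- coercivity of $\mathring r$ under (C2), \eqref{A2.1b} under (C1), \eqref{US} under (C3) --- makes $\{\zeta_{k}\}$ tight, any subsequential weak limit $\zeta$ lies in $\eG$ because $\zeta_{k}(L^{u}f)=\tfrac1{T_{k}}\Exp^{U}_{x}[f(X_{T_{k}})-f(x)]\to0$ for $f\in\calC^{2}_{c}(\Rd)$, and truncating $r_{\mu^{*}}$ at level $M$ and using uniform continuity of $r$ on compacts in its last argument together with $\eta_{t}\to\mu^{*}$ gives $J_{x}(U,\eta)\ge\zeta(r_{\mu^{*}}\wedge M)$ for all $M$, hence $J_{x}(U,\eta)\ge\zeta(r_{\mu^{*}})\ge\inf_{\uppi\in\eG}\uppi(r_{\mu^{*}})=\Tilde\varrho$. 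Applied with $U=v^{*}$ this gives $J_{x}(v^{*},\eta)\ge\Tilde\varrho$, and the reverse inequality follows from the diffusion ergodic theorem under $v^{*}$ together with $\eta_{t}\to\mu^{*}$, which yield $\tfrac1T\Exp^{v^{*}}_{x}\bigl[\int_{0}^{T}r(X_{t},v^{*}(X_{t}),\eta_{t})\,\D t\bigr]\to\uppi^{*}(r_{\mu^{*}})=\Tilde\varrho$. Thus $J_{x}(v^{*},\eta)=\Tilde\varrho\le J_{x}(U,\eta)$ for all admissible $U$, settling the relaxed case. When in addition $\Act$ is convex and $u\mapsto h(p,x,u,\mu)$ is strictly convex, the minimizer in $\min_{u\in\Act}\bigl[a^{ij}(x)\,\partial_{ij}V_{\mu^{*}}(x)+h(\grad V_{\mu^{*}}(x),x,u,\mu^{*})\bigr]$ is unique and, by a measurable selection theorem, defines a precise stationary Markov control $\hat v\colon\Rd\to\Act$; optimality puts $\hat v$ in $\Ussm$ with invariant measure $\mu^{*}$, so $\uppi^{*}=\mu^{*}\cast\delta_{\hat v(\cdot)}$, and repeating the construction with $\hat v$ in place of $v^{*}$ yields a strict MFG solution.
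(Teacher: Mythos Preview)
Your proposal is correct and follows essentially the same two-step architecture as the paper: a Kakutani--Fan--Glicksberg fixed point yielding the stationary data $(V,\Tilde\varrho,\mu^{*},v^{*})$ (your fixed point is set up on ergodic occupation measures $\eG$ rather than on invariant measures $\eH$ as in Lemma~\ref{L3.6} and Theorem~\ref{T3.2}, but since $\eH$ is exactly the set of first marginals of $\eG$ the two formulations are interchangeable), followed by the verification that the path of laws $\eta_{t}=P^{v^{*}}_{t}(x,\cdot)$ gives an MFG solution in the sense of Definition~\ref{Defi-2.1}. Your lower-bound argument via mean occupation measures and truncation is the same device the paper uses with the cutoff $\phi_{R}$ and \cite[Theorem~3.4.7]{ari-bor-ghosh}.

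The one place where you are a bit too quick is the equality $J_{x}(v^{*},\eta)=\Tilde\varrho$ under (C1) and (C2). Writing ``the diffusion ergodic theorem together with $\eta_{t}\to\mu^{*}$'' is not quite enough: you need uniform integrability of $t\mapsto r\bigl(X_{t},v^{*}(X_{t}),\eta_{t}\bigr)$ under $\Prob^{v^{*}}_{x}$, and under (C1) there is no ambient Lyapunov function to deliver this. The paper closes this gap by extracting, from the HJB equation at $(\mu^{*},v^{*})$ and \cite[Lemma~3.7.2]{ari-bor-ghosh}, a pair $(\Lyap,h)$ with $L^{v^{*}}\Lyap\le c_{0}-h$ and $r_{\mu^{*}}(\cdot,v^{*}(\cdot))\in\sorder(h)$; this both confines $\{\eta_{t}\}$ to a compact set $G\subset\calP(\Rd)$ and, via \eqref{A2.1a}, upgrades the bound to $\sup_{\nu\in G}r_{\nu}(\cdot,v^{*}(\cdot))\in\sorder(h)$, which is exactly the uniform integrability needed. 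Under (C2) the analogous point is that $\eta_{t}\to\mu^{*}$ must hold in $\eD_{p}$, for which the paper invokes \cite[Proposition~2.6]{ichihara-sheu}. Once you insert these ingredients your argument matches the paper's.
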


\section{MFG solutions for HJB}\label{S-convex}
In this section we investigate the existence of MFG solutions for the associated
Hamilton--Jacobi--Bellman (HJB) equation given by \eqref{Ehjb0}.

Recall the notation $r_\mu(x,u)\,=\, r(x,u, \mu)$.
Consider the ergodic control problem
\begin{equation*}
\varrho^{*}_\mu \,\df\, \inf_{U\in\Uadm} \,\limsup_{T\to\infty}\,
\frac{1}{T}\, \Exp^U_x\biggl[\int_0^T r_\mu(X_t, U_t) \,\D{t}\biggr]
\end{equation*}
for fixed $\mu\in\Pm(\Rd)$.
Also recall the abbreviated notation
$\uppi(r)=\int_{\Rd\times\Act} r\,\D{\uppi}$.
We need the following definition.

\begin{definition}
Let $f\,\colon\,\Rd\times\Act\to\RR_{+}$.
We say that $\Bar\uppi\in\eG$ is \emph{optimal relative to $f$}
(for the ergodic cost criterion) if
$\Bar\uppi(f) = \inf_{\uppi\in\eG}\, \uppi(f)$.
For $\mu\in\eH$, we let $\calA(\mu)\subset\eG$ denote the set of optimal ergodic
occupation measures relative to $r_{\mu}$,
and $\calA^{*}(\mu)\subset\eH$
denote the corresponding set of invariant probability measures.
We also let $\Tilde\varrho_\mu\df\inf_{\uppi\in\eG}\, \uppi(r_\mu)$.
\end{definition}

There are two general models for which there exists an
optimal ergodic occupation relative to $r_{\mu}$ for $\mu\in\Pm(\Rd)$,
and optimality can be characterized by the HJB equation:

\smallskip

\noindent\textbf{(H1)}
The running cost $r_{\mu}$ satisfies
$\liminf_{\abs{x}\to\infty}\,\inf_{u\in\Act}\,r_{\mu}(x,u) > \Tilde{\varrho}_{\mu}$,
and $\Tilde{\varrho}_{\mu}<\infty$.

\smallskip

\noindent\textbf{(H2)}
The set $\eH$ is compact, and $r_{\mu}$ is uniformly integrable
with respect to $\eH$.

\medskip
For models in (H1)--(H2) we assume that
$r:\Rd\times\Act\times\calP(\Rd)\to \R_+$ is continuous.
Hypothesis~(H2) is equivalent to (C3a) of Assumption~\ref{A2.1}, with $h$ satisfying
$r_{\mu}\in\sorder(h)$ by \cite[Theorem~3.7.2]{ari-bor-ghosh}.

We quote the following result which is contained in
Theorems~3.6.10 and Theorem~3.7.12 of \cite{ari-bor-ghosh}.

\begin{theorem}\label{T3.1}
If (H1) holds, then
there exists a unique $V_\mu\in\calC^2(\Rd)$
which is bounded below in $\Rd$
and satisfies
\begin{equation}\label{Ehjb1}
\min_{u\in\Act}\;\bigl[L^u V_\mu(x) + r_\mu(x,u)\bigr]\;=\;
\Tilde{\varrho}_\mu\,, \quad V_\mu(0)\;=\;0\,.
\end{equation}
Under (H2), there exists a unique
$V_\mu\in\calC^2(\Rd)\cap\sorder(\calV)$
satisfying \eqref{Ehjb1} (see \cite[Theorem~3.7.12]{ari-bor-ghosh}).
In either case, $\Tilde\varrho_{\mu}=\varrho^*_\mu$, and
$v\in\Usm$ is optimal for the ergodic
control problem if and only if it satisfies
\begin{equation}\label{Ehjb2}
\min_{u\in\Act}\;\bigl[L^u V_\mu(x) + r_\mu(x,u)\bigr]\;=\;
L^v V_\mu(x) + r_\mu\bigl(x,v(x)\bigr) \quad \text{almost everywhere in $\Rd$}\,.
\end{equation}
\end{theorem}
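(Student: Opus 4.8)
The plan is to prove both existence assertions by the \emph{vanishing discount} method, and then to identify the constant $\Tilde\varrho_{\mu}$ together with the optimality characterization by combining an integration of the HJB equation against ergodic occupation measures with a verification argument. Fix $\mu\in\Pm(\Rd)$. For $\al\in(0,1)$ let $V_{\al}$ be the $\al$-discounted value function, $V_{\al}(x)\df\inf_{U\in\Uadm}\Exp^{U}_{x}\bigl[\int_{0}^{\infty}\E^{-\al t}r_{\mu}(X_{t},U_{t})\,\D{t}\bigr]$; by the standard theory of discounted controlled diffusions $V_{\al}\in\calC^{2}(\Rd)$ is the unique solution, with the appropriate growth, of $\min_{u\in\Act}\bigl[L^{u}V_{\al}(x)+r_{\mu}(x,u)\bigr]=\al V_{\al}(x)$, and $V_{\al}\ge0$. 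Under either hypothesis there is a stable control $v'\in\Ussm$ with $\mu_{v'}\bigl(r_{\mu}(\cdot\,,v')\bigr)<\infty$: this is automatic under (H1), since $\Tilde\varrho_{\mu}<\infty$ and $\Tilde\varrho_{\mu}=\inf_{\uppi\in\eG}\uppi(r_{\mu})$, and under (H2) it follows from uniform integrability of $r_{\mu}$ over the compact set $\eH$. Comparing $V_{\al}$ with the cost of $v'$ and applying an Abelian theorem to the ergodic average of $r_{\mu}(\cdot\,,v')$ under $\Prob^{v'}_{0}$ gives $\limsup_{\al\to0}\al V_{\al}(0)\le\mu_{v'}\bigl(r_{\mu}(\cdot\,,v')\bigr)$, so $\{\al V_{\al}(0)\}_{\al\in(0,1)}$ is bounded. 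Set $\bar V_{\al}\df V_{\al}-V_{\al}(0)$, which satisfies $\min_{u\in\Act}\bigl[L^{u}\bar V_{\al}+r_{\mu}(\cdot\,,u)\bigr]=\al V_{\al}(0)+\al\bar V_{\al}$.

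The core of the argument is a family of a priori estimates on $\bar V_{\al}$ that are uniform in $\al$. In case (H1), choose $v'$ with $\mu_{v'}\bigl(r_{\mu}(\cdot\,,v')\bigr)$ so close to $\Tilde\varrho_{\mu}$ that it lies below $\liminf_{\abs{x}\to\infty}\inf_{u\in\Act}r_{\mu}(x,u)$; then, outside a fixed ball $B_{R_{0}}$ and for all small $\al$, one has $\al V_{\al}(0)-r_{\mu}(\cdot\,,u)\le-\del$ for some $\del>0$, so $\bar V_{\al}$ is there a supersolution of $L^{v_{\al}}-\al$ with strictly negative right-hand side, where $v_{\al}$ is a measurable minimizing selector. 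A maximum-principle and Harnack-inequality argument applied to the nonnegative function $\bar V_{\al}-\min_{\bar{B}_{R_{0}}}\bar V_{\al}$ then yields both a uniform lower bound $\inf_{\Rd}\bar V_{\al}>-\infty$ and a uniform local oscillation bound $\osc_{B_{R}}\bar V_{\al}\le C_{R}$. In case (H2), comparison with the Lyapunov function $\calV$ of (C3a), together with $r_{\mu}\in\sorder(h)$, gives $\bar V_{\al}\in\order(\calV)$ uniformly in $\al$, hence again a uniform local bound. Once $\{\bar V_{\al}\}$ is locally bounded, interior $\Sobl^{2,p}$ and Schauder estimates for the linear equation $a^{ij}\partial_{ij}\bar V_{\al}+b\bigl(x,v_{\al}(x)\bigr)\cdot\grad\bar V_{\al}+r_{\mu}\bigl(x,v_{\al}(x)\bigr)=\al V_{\al}(0)+\al\bar V_{\al}$ show that $\{\bar V_{\al}\}$ is relatively compact in $\calCl^{2,\bet}(\Rd)$. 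Choose $\al_{n}\downarrow0$ with $\bar V_{\al_{n}}\to V_{\mu}$ in $\calCl^{2,\bet}(\Rd)$ and $\al_{n}V_{\al_{n}}(0)\to\varrho$; since $\al_{n}\bar V_{\al_{n}}\to0$ locally uniformly and $w\mapsto\min_{u\in\Act}\bigl[L^{u}w+r_{\mu}(\cdot\,,u)\bigr]$ is continuous on $\calCl^{2}$ (using compactness of $\Act$ and continuity of $r$), passing to the limit gives $\min_{u\in\Act}\bigl[L^{u}V_{\mu}+r_{\mu}(\cdot\,,u)\bigr]=\varrho$ and $V_{\mu}(0)=0$, with $V_{\mu}$ bounded below in case (H1) and $V_{\mu}\in\order(\calV)$ in case (H2); the refinement $V_{\mu}\in\sorder(\calV)$ in case (H2) follows from a second comparison using $r_{\mu}\in\sorder(h)$.

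It remains to identify $\varrho=\Tilde\varrho_{\mu}=\varrho^{*}_{\mu}$ and to establish the optimality characterization. Let $v$ be a measurable minimizing selector in the limiting HJB equation, so $L^{v}V_{\mu}+r_{\mu}(\cdot\,,v)=\varrho$ a.e. In case (H1), outside a large ball $L^{v}V_{\mu}=\varrho-r_{\mu}(\cdot\,,v)\le-\eps<0$, so a Dynkin-formula estimate over $\uptau_{R}$ (using that $V_{\mu}$ is bounded below) shows that the diffusion under $v$ spends finite expected time outside that ball, whence $v\in\Ussm$; in case (H2) every $v\in\Usm$ is stable because $L^{v}\calV(x)\le c_{1}-c_{2}\inf_{u\in\Act}h(x,u)\to-\infty$ as $\abs{x}\to\infty$. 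Integrating $L^{v}V_{\mu}+r_{\mu}(\cdot\,,v)=\varrho$ against $\mu_{v}$ —the identity $\int L^{v}V_{\mu}\,\D\mu_{v}=0$ being justified by truncating $V_{\mu}$ and using the growth bounds— gives $\mu_{v}\bigl(r_{\mu}(\cdot\,,v)\bigr)=\varrho$, so $\Tilde\varrho_{\mu}\le\uppi_{v}(r_{\mu})=\varrho$. Conversely, for any $\uppi=\nu\cast w\in\eG$ with $\uppi(r_{\mu})<\infty$, integrating the inequality $L^{w}V_{\mu}+r_{\mu}(\cdot\,,w)\ge\varrho$ against $\nu$ (again $\nu(L^{w}V_{\mu})=0$ by truncation) gives $\uppi(r_{\mu})\ge\varrho$, hence $\Tilde\varrho_{\mu}\ge\varrho$; thus $\varrho=\Tilde\varrho_{\mu}$. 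The ergodic theorem for the positive recurrent diffusion under $v$ gives $\limsup_{T\to\infty}\tfrac{1}{T}\Exp^{v}_{x}\bigl[\int_{0}^{T}r_{\mu}(X_{t},v)\,\D{t}\bigr]=\mu_{v}\bigl(r_{\mu}(\cdot\,,v)\bigr)=\Tilde\varrho_{\mu}$ for every $x$, so $\varrho^{*}_{\mu}\le\Tilde\varrho_{\mu}$; the reverse inequality $\varrho^{*}_{\mu}\ge\Tilde\varrho_{\mu}$ follows by extracting an ergodic occupation measure as a subsequential weak limit of the mean empirical measures of a near-optimal control, tightness being supplied by near-monotonicity in (H1) and by the Lyapunov bound in (H2). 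This also settles the optimality characterization: if $v\in\Usm$ satisfies the second equality in \eqref{Ehjb2}, the computation above gives $\mu_{v}\bigl(r_{\mu}(\cdot\,,v)\bigr)=\varrho=\varrho^{*}_{\mu}$, so $v$ is optimal; conversely, an optimal $v$ is stable with $\mu_{v}\bigl(r_{\mu}(\cdot\,,v)\bigr)=\varrho^{*}_{\mu}=\varrho$, and integrating $L^{v}V_{\mu}+r_{\mu}(\cdot\,,v)\ge\varrho$ against $\mu_{v}$ forces equality $\mu_{v}$-a.e., hence a.e.\ in $\Rd$ since $\mu_{v}$ has a strictly positive density.

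Finally, for uniqueness suppose $V^{1},V^{2}$ both solve \eqref{Ehjb1} with $V^{i}(0)=0$ and the stated growth, and let $v^{2}\in\Ussm$ be a minimizing selector for $V^{2}$. Since $L^{v^{2}}V^{1}+r_{\mu}(\cdot\,,v^{2})\ge\min_{u\in\Act}\bigl[L^{u}V^{1}+r_{\mu}(\cdot\,,u)\bigr]=\varrho=L^{v^{2}}V^{2}+r_{\mu}(\cdot\,,v^{2})$, the function $g\df V^{1}-V^{2}$ satisfies $L^{v^{2}}g\ge0$ with $g(0)=0$. In case (H2), $g\in\sorder(\calV)$, and a strong maximum principle at infinity for $L^{v^{2}}$ (using that $\calV$ is inf-compact and $L^{v^{2}}\calV\le c_{1}$) forces $g\equiv0$; in case (H1) one instead uses the representation of a bounded-below solution of \eqref{Ehjb1} as a relative value function over exit times from bounded domains, which determines $V_{\mu}$ uniquely given the normalization $V_{\mu}(0)=0$. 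The main obstacle is precisely the uniform-in-$\al$ a priori estimates of the second paragraph—extracting the stochastic stability and the local oscillation control from near-monotonicity alone in case (H1), and from the Foster--Lyapunov inequality together with $r_{\mu}\in\sorder(h)$ in case (H2)—together with the justification of the identities $\int L^{v}V_{\mu}\,\D\mu_{v}=0$ and the maximum-principle/representation argument behind uniqueness; all of these are carried out in detail in \cite[Theorems~3.6.10 and 3.7.12]{ari-bor-ghosh}.
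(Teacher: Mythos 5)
The paper offers no proof of this theorem: it is quoted verbatim from Theorems~3.6.10 and~3.7.12 of \cite{ari-bor-ghosh}, and your vanishing-discount construction, the uniform oscillation/Harnack estimates under near-monotonicity, the Lyapunov comparison under (H2), the identification of $\Tilde\varrho_\mu=\varrho^*_\mu$ by integrating the HJB against ergodic occupation measures, and the uniqueness arguments are precisely the route taken in those cited results. Your reconstruction is correct and essentially the same approach.
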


It follows by Theorem~\ref{T3.1} that if (H1) or (H2) hold, then
the set valued maps $\calA$ and $\calA^{*}$ can be
characterized by
\begin{align*}
\calA(\mu)&\,=\, \bigl\{\uppi\in \eG\;\colon\, \uppi=\uppi_v\,,~ \text{where}~
v\in\Ussm ~\text{satisfy}~ \eqref{Ehjb2}\}\,,
\\[5pt]
\calA^{*}(\mu) &\,=\, \{\nu\in\eH\;\colon\, \nu\cast v\in\calA(\mu)
~\text{for some}~ v\in\Ussm\}\,.
\end{align*}

This motivates the definition of the following notion of an MFG solution.

\begin{definition}\label{DMFG2}
An invariant probability measure $\mu\in\eH$ is said to be a
MFG solution if  $\mu\in\calA^*(\mu)$ and there exists
$V_\mu\in\calC^2(\Rd)$ and $v\in\Ussm$ such that
\begin{align}
\min_{u\in\Act}\;\bigl[L^u V_\mu(x) + r_\mu(x,u)\bigr]
&\;=\;  L^v\, V_\mu + r_\mu\bigl(x,v(x)\bigr) \;=\;
\Tilde\varrho_\mu\quad \text{a.e.}\; x\in\Rd\,,\label{Ehjb3}\\
\int_{\Rd} L^v f(x)\, \mu(\D{x}) & \,=\, 0 \quad \forall \; f\in\calC^2_c(\Rd)\,.
\label{Eipm}
\end{align}
We retain the notion of a relaxed, or strict solution form
Definition~\ref{Defi-2.1}.
\end{definition}

Equation \eqref{Ehjb3} is the HJB equation corresponding to the
ergodic control problem with running cost $r_\mu$,
while \eqref{Eipm} asserts that
$\mu=\mu_v$ is the invariant probability measure associated with the optimal
Markov control $v$.

\begin{remark}
The reader should have noticed the relation between Definitions~\ref{Defi-2.1}
and~\ref{DMFG2}.
It should observed that the initial distribution in Definition~\ref{Defi-2.1} is a
Dirac mass at $x$.
In fact, one may consider any \textit{nice} distribution as initial condition
in Definition~\ref{Defi-2.1}.
For example, if we fix
the initial condition to be $\mu$ satisfying \eqref{Eipm},
then a solution $\mu\in\Pm$ according to Definition~\ref{DMFG2}
gives rise to a solution according to 
Definition~\ref{Defi-2.1} due to stationarity.
\end{remark}

We start with the following lemma.

\begin{lemma}\label{L3.1}
Suppose that either (H1) or (H2) hold.
Then the set $\calA^*(\mu)$ is non-empty, convex and compact in
$\Pm(\Rd)$ under the total variation norm topology. 
\end{lemma}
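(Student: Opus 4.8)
The goal is to show that $\calA^*(\mu)$ is non-empty, convex, and compact in the total variation norm topology on $\Pm(\Rd)$. I would organize the argument into three parts. First, \emph{non-emptiness}: under either (H1) or (H2), Theorem~\ref{T3.1} furnishes a function $V_\mu\in\calC^2(\Rd)$ solving the HJB equation \eqref{Ehjb1}, and the verification part of that theorem guarantees the existence of at least one $v\in\Usm$ satisfying the pointwise minimization \eqref{Ehjb2}; moreover any such $v$ is optimal, hence lies in $\Ussm$ (the associated diffusion is positive recurrent, as noted just after \eqref{Einvm}), so $\mu_v\in\calA^*(\mu)$ and the set is non-empty.

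\smallskip

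\emph{Convexity.} I would argue at the level of ergodic occupation measures: $\calA(\mu)$ is the set of $\uppi\in\eG$ with $\uppi(r_\mu)=\Tilde\varrho_\mu$, i.e. the intersection of the convex set $\eG$ (closed and convex by \cite[Lemma~3.2.3]{ari-bor-ghosh}) with a supporting hyperplane of the linear functional $\uppi\mapsto\uppi(r_\mu)$; such an intersection is convex. To pass to $\calA^*(\mu)$ one notes that under (H1) or (H2) the map $\uppi\mapsto$ its $\Rd$-marginal restricted to $\calA(\mu)$ is affine and, by the HJB characterization \eqref{Ehjb2}, essentially injective on the relevant set (an optimal $\mu_v$ determines $v$ a.e.\ on $\{\mu_v>0\}$ via \eqref{Ehjb2}); more simply, if $\mu_1=\mu_{v_1},\mu_2=\mu_{v_2}\in\calA^*(\mu)$ with occupation measures $\uppi_1,\uppi_2\in\calA(\mu)$, then $t\uppi_1+(1-t)\uppi_2\in\calA(\mu)$, and its disintegration has $\Rd$-marginal $t\mu_1+(1-t)\mu_2$, which therefore lies in $\calA^*(\mu)$. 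This gives convexity.

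\smallskip

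\emph{Compactness in total variation.} This is the main obstacle, precisely because the state space is $\Rd$ and total variation convergence is strong. The plan is: take a sequence $\mu_n=\mu_{v_n}\in\calA^*(\mu)$ with $\uppi_n=\mu_n\cast v_n\in\calA(\mu)$. Under (H1) the near-monotonicity / inf-compactness of $r_\mu$ gives tightness of $\{\uppi_n\}$ in $\calP(\Rd\times\Act)$ directly (the bound $\uppi_n(r_\mu)=\Tilde\varrho_\mu<\infty$ together with $r_\mu$ inf-compact at level $\Tilde\varrho_\mu$); under (H2) compactness of $\eH$ is assumed outright. Either way, along a subsequence $\uppi_n\to\uppi$ weakly with $\uppi\in\eG$ (closedness) and $\uppi(r_\mu)\le\liminf\uppi_n(r_\mu)=\Tilde\varrho_\mu$ (lower semicontinuity, valid because $r_\mu\ge0$ is continuous and, under uniform integrability in (H2), no mass escapes), so $\uppi\in\calA(\mu)$ and its marginal $\tilde\mu\in\calA^*(\mu)$. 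The final step is to upgrade weak convergence $\mu_n\to\tilde\mu$ to total variation convergence: here I would invoke the regularity theory recorded in Section~\ref{S-existence}---under any $v\in\Ussm$ the invariant measure $\mu_v$ has a density $\varrho_v$ that is locally H\"older continuous (from \cite{Bogachev-01,Stannat-99}), with interior estimates uniform over the (tight, hence on compacts equicontinuous) family; combined with tightness this forces $L^1$-convergence of the densities, i.e.\ total variation convergence of $\mu_n$ to $\tilde\mu$. Hence $\calA^*(\mu)$ is sequentially compact, and being a metric space, compact, in the total variation topology.

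\smallskip

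The delicate point throughout is the interplay between the two topologies: the fixed-point arguments later need compactness in total variation, but the natural a~priori bounds live at the level of weak convergence of occupation measures; the bridge is the uniform local regularity of invariant densities plus tightness, and I expect verifying the uniformity of those density estimates (and the no-escape-of-mass under (H2)) to be where most of the work sits.
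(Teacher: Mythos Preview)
Your plan is correct and follows essentially the same route as the paper's proof. Two minor points of comparison: first, your tightness argument under (H1) is slightly glib---near-monotonicity $\liminf_{|x|\to\infty}\inf_u r_\mu>\Tilde\varrho_\mu$ is weaker than inf-compactness, and the bound $\uppi_n(r_\mu)=\Tilde\varrho_\mu$ alone only gives $\uppi_n(K^c)\le\Tilde\varrho_\mu/(\Tilde\varrho_\mu+\delta)<1$, not arbitrarily small mass outside compacts; the paper closes this via a one-point-compactification argument, showing that if mass escaped then the normalized finite part of the limit would lie in $\eG$ with value strictly below $\Tilde\varrho_\mu$, a contradiction. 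Second, for the weak-to-total-variation upgrade the paper simply invokes \cite[Lemma~3.2.5]{ari-bor-ghosh} as a black box, whereas you sketch its content (uniform local regularity of invariant densities plus tightness forces $L^1$ convergence); your sketch is correct in spirit and is indeed how that lemma is proved, via Harnack-type estimates on the invariant densities.
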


\begin{proof}
It is well known that $\eG$ is convex
(see \cite[Lemma~3.2.3]{ari-bor-ghosh}).
The convexity of $\calA(\mu)$ follows by the linearity
of the map $\uppi\to \int_{\Rd\times\Act}r_\mu(x,u) \pi(\D{x},\D{u})$.
It then follows that
$\calA^*(\mu)$ is convex by the linearity of the projection.

To prove compactness, let $\{\nu_n\}$ be a sequence in $\calA^*(\mu)$, and
$\{\uppi_n\}$ be a corresponding sequence in 
$\calA(\mu)$ i.e., $\uppi_n=\nu_n\cast v_n$ for some $v_n\in\Ussm$ that
satisfies \eqref{Ehjb3}.
Let $(\Rd\times\Act)\cup\{\infty\}$ be the one point compactification
of $(\Rd\times\Act)$.
If $\{\uppi_n\}$ is not tight in $\Pm(\Rd\times\Act)$
then there exist  a constant $\varepsilon>0$,
and a subsequence, also denoted by $\{\uppi_n\}$, such that
$\uppi_{n}$ converges to a probability measure of the form
$\uppi'$
on $(\Rd\times\Act)\cup\{\infty\}$ such that $\uppi'(\infty)\ge\varepsilon$.
It is evident from the near monotone condition in (H1) that
$\uppi'(\Rd\times\Act)>0$.
It is also standard to show
that $\frac{1}{1-\uppi'(\infty)}\uppi'$ is an ergodic occupation
measure on $\Rd\times\Act$
which implies by optimality that
\begin{equation}\label{EL3.1a}
\frac{1}{1-\uppi'(\infty)}\uppi'(r_\mu) \;\ge\; \Tilde\varrho_\mu\,.
\end{equation}
However, the lower semicontinuity of the
map $\uppi\mapsto\uppi(r_\mu)$ and (H1) imply that
$\uppi'(r_\mu) < \bigl(1-\uppi'(\infty)\bigr)\Tilde\varrho_\mu$,
which contradicts \eqref{EL3.1a}.
Therefore $\{\uppi_n\}$ must be tight in $\Pm(\Rd\times\Act)$ which
implies that $\{\nu_n\}$ is tight.
On the other hand, under (H2), $\{\nu_n\}$ is trivially tight.
Consider any subsequence such that
$\nu_n\to \nu$ in $\calP(\Rd)$
and $v_n\to v$ in $\Usm$ under the topology of Markov controls,
and let $\uppi_n\df\nu_n\cast v_n$.
It follows by \cite[Lemma~3.2.6]{ari-bor-ghosh}
that $\uppi_n\to\uppi\df\nu\cast v\in\eG$
as $n\to\infty$.
By the lower semicontinuity of the map $\uppi\mapsto \uppi(r_{\mu})$ we have
$$\Tilde\varrho_\mu \;=\;
\liminf_{n\to\infty}\;\uppi_n(r_\mu)
\; \ge\; \uppi(r_\mu)\;\ge\;\Tilde\varrho_\mu\,,$$
which implies that $\calA(\mu)$ is closed and therefore compact.
It then follows by \cite[Lemma~3.2.5]{ari-bor-ghosh} that
$\calA^*(\mu)$ is compact in
$\Pm(\Rd)$ under the total variation norm topology.
Compactness of $\calA^*(\mu)$ is obvious under (H2).
\end{proof}

\begin{remark}
It follows by Proposition~\ref{P-villani},
that for a running cost satisfying (C2),
$\calA^*(\mu)$ is
compact in $\calP_p(\Rd)$ for all $\mu\in \calP_p(\Rd)$
such that $\Tilde\varrho_{\mu}<\infty$.
\end{remark}

The following theorem asserts the existence of MFG solutions in the sense of
Definition~\ref{DMFG2}.

\begin{theorem}\label{T3.2}
Suppose that Assumptions~\ref{A2.1}--\ref{A2.2} hold.
Then there exists a relaxed MFG solution in the sense of
Definition~\ref{DMFG2}.
Moreover, if $\Act$ is convex and $u\mapsto h(p, x,u, \mu)$ is strictly convex
for all $x, p\in\Rd$, and $\mu\in\calP(\Rd)$,
then there exists a strict MFG solution.
\end{theorem}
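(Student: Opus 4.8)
The plan is to exhibit an MFG solution in the sense of Definition~\ref{DMFG2} as a fixed point and to apply the Kakutani--Fan--Glicksberg theorem. It is convenient to work on the space of ergodic occupation measures: writing $\mu_{\uppi}$ for the $\Rd$-marginal of $\uppi\in\eG$ (so that $\mu_{\uppi}\in\eH$ automatically), I would look for $\uppi^{*}\in\eG$ that is optimal relative to $r_{\mu_{\uppi^{*}}}$, i.e.\ a fixed point of the set-valued map $\uppi\mapsto\calA(\mu_{\uppi})$. Given such a $\uppi^{*}$, Theorem~\ref{T3.1} supplies $V_{\mu_{\uppi^{*}}}\in\calC^{2}(\Rd)$ and a stationary Markov control $v\in\Ussm$ for which \eqref{Ehjb3} holds with $\mu=\mu_{\uppi^{*}}$, while $\mu_{\uppi^{*}}=\mu_{v}$ is the invariant probability measure of $v$, so that \eqref{Eipm} is satisfied as well; hence $\mu_{\uppi^{*}}$ is a relaxed MFG solution. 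Thus everything reduces to the fixed-point problem.

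First I would fix a nonempty, convex, compact subset $\calK$ invariant under the map, and on which it is well defined. Under hypothesis (H2)---in force for running costs of type (C3) of Assumption~\ref{A2.1}---the Lyapunov inequality \eqref{US} makes $\eG$ itself compact and convex, and one takes $\calK=\eG$. Under (C1), Assumption~\ref{A2.2}\,(i) together with the comparability bound \eqref{A2.1a} gives $\Tilde\varrho_{\mu}<\infty$ for every $\mu\in\eH$, \eqref{A2.1b} gives the near-monotonicity required for (H1), and Assumption~\ref{A2.2}\,(ii) confines $\calA(\mu_{\uppi})$ to the compact set $\sK$ for all $\uppi\in\eG$; one then takes $\calK$ to be the closed convex hull of $\sK$, which lies in $\eG$ (closed and convex), is convex, and is compact, since tightness is preserved under convex combinations. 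Under (C2) one works inside $\calP_{p}(\Rd)$: the superlinear growth of $\mathring r$, the growth bound on $F$, and Proposition~\ref{P-villani} produce a convex, $\eD_{p}$-compact invariant domain, since optimal occupation measures have $p$-th moments controlled by $\Tilde\varrho_{\mu}$, which in turn is controlled by the $p$-th moment of $\mu$.

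Next I would verify the hypotheses of Kakutani--Fan--Glicksberg for $\uppi\mapsto\calA(\mu_{\uppi})$ on $\calK$, regarded as a subset of the Hausdorff locally convex space $\calM(\Rd\times\Act)$ with the topology of weak convergence (or of $\calP_{p}(\Rd)$ with $\eD_{p}$, in the (C2) case). Nonemptiness, convexity and compactness of the values are exactly Lemma~\ref{L3.1} (with the $\calP_{p}$ refinement recorded after it). The remaining, and principal, point is the closed-graph property: if $\uppi_{n}\to\uppi$ in $\calK$ and $\sigma_{n}\in\calA(\mu_{\uppi_{n}})$ with $\sigma_{n}\to\sigma$, I must show $\sigma\in\calA(\mu_{\uppi})$, i.e.\ $\sigma(r_{\mu_{\uppi}})=\Tilde\varrho_{\mu_{\uppi}}$. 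Continuity of $r$ in its third argument gives $r_{\mu_{\uppi_{n}}}\to r_{\mu_{\uppi}}$ uniformly on compact subsets of $\Rd\times\Act$; the structural hypotheses---near monotonicity with \eqref{A2.1b} (or the growth bound on $F$ with Proposition~\ref{P-villani}), or the Lyapunov bound \eqref{US} with $\sup_{\mu\in\calK}r_{\mu}\in\sorder(h)$---furnish the uniform integrability that upgrades this to $\liminf_{n}\sigma_{n}(r_{\mu_{\uppi_{n}}})\ge\sigma(r_{\mu_{\uppi}})$ via lower semicontinuity, while for every $\tau\in\eG$ one has $\limsup_{n}\sigma_{n}(r_{\mu_{\uppi_{n}}})\le\limsup_{n}\tau(r_{\mu_{\uppi_{n}}})=\tau(r_{\mu_{\uppi}})$ by dominated convergence (using $\uppi_{0}(r_{\mu_{\uppi}})<\infty$). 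Taking the infimum over $\tau\in\eG$ forces $\sigma(r_{\mu_{\uppi}})=\Tilde\varrho_{\mu_{\uppi}}$ as desired, and incidentally shows $\mu\mapsto\Tilde\varrho_{\mu}$ is continuous along such sequences. Kakutani--Fan--Glicksberg then yields a fixed point $\uppi^{*}$, and the relaxed MFG solution follows as in the first paragraph. I expect this uniform-integrability step---ruling out escape of mass to infinity---to be the crux, since it is where non-compactness of $\Rd$ is felt and where the three cost classes of Assumption~\ref{A2.1} must each be handled on their own terms.

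Finally, for the strict solution, suppose $\Act$ is convex and $u\mapsto h(p,x,u,\mu)=p\cdot b(x,u)+r(x,u,\mu)$ is strictly convex for all $x,p\in\Rd$ and $\mu\in\calP(\Rd)$. Writing the minimized Hamiltonian in \eqref{Ehjb3} as $\min_{u\in\Act}\bigl[a^{ij}(x)\partial_{ij}V_{\mu^{*}}(x)+h(\grad V_{\mu^{*}}(x),x,u,\mu^{*})\bigr]$, where $\mu^{*}=\mu_{\uppi^{*}}$, strict convexity together with compactness of $\Act$ makes the minimizer a single point $v^{*}(x)\in\Act$, and a measurable selection theorem gives a measurable map $v^{*}\colon\Rd\to\Act$. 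Since the relaxed control $v$ produced above attains this minimum a.e., it must coincide (a.e., in particular $\mu^{*}$-a.e.) with the Dirac kernel $\delta_{v^{*}(x)}$; hence $v^{*}\in\Ussm$, $\mu_{v^{*}}=\mu^{*}$, and $V_{\mu^{*}}$ together with the precise control $v^{*}$ is a strict MFG solution.
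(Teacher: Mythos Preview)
Your proposal is correct and follows essentially the same route as the paper: a Kakutani--Fan--Glicksberg fixed-point argument on a nonempty convex compact invariant domain, with nonemptiness, convexity and compactness of the values supplied by Lemma~\ref{L3.1}, and the closed-graph property obtained from lower semicontinuity of $\uppi\mapsto\uppi(r_{\mu})$ together with upper semicontinuity (hence continuity) of $\mu\mapsto\Tilde\varrho_{\mu}$. The paper carries this out at the level of invariant measures via the map $\mu\mapsto\calA^{*}(\mu)$ on a set $\calK\subset\calP(\Rd)$ constructed in Lemma~\ref{L3.6}, and packages the closed-graph verification into Lemmas~\ref{L3.3}--\ref{L3.5} and Corollary~\ref{C3.1}, whereas you work on ergodic occupation measures and argue directly; the content is the same. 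One small but genuine difference is in the strict case: the paper observes that strict convexity makes $\calA^{*}$ single-valued and continuous and then invokes Schauder--Tychonoff afresh, while you argue that the relaxed fixed point already obtained must have its kernel concentrated on the unique precise minimizer, so is automatically strict. Your argument is shorter and avoids repeating the fixed-point step; the paper's version has the mild advantage of highlighting that $\mu\mapsto\calA^{*}(\mu)$ is in fact a continuous single-valued map in this situation.
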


The rest of this section is devoted in proving the above result.
The proof is an application of the Kakutani--Fan--Glicksberg fixed point theorem.
A similar fixed point theorem has been
applied in \cite{lacker} to obtain
MFG solutions for finite horizon control problems.
Readers may consult \cite[Chapter~17]{ali-bor} for some basic
properties of set-valued maps used in the proofs below.

We recall the
definition of \emph{hemicontinuity} \cite[Section~17.3]{ali-bor}.

\begin{definition}
The map $\mu\mapsto\calA^*(\mu)$ is said to be \emph{upper hemicontinuous}
if whenever $\mu_n\to\mu$ as $n\to\infty$, and $\nu_n\in\calA^*(\mu_n)$ for all $n$,
then the sequence $\{\nu_n\}$ has a limit point in $\calA^*(\mu)$.
The map $\mu\mapsto\calA^*(\mu)$ is said to be \emph{lower hemicontinuous} 
if whenever $\mu_n\to\mu$ as $n\to\infty$ and $\nu\in\calA^*(\mu)$, then there
exists a subsequence $\{\nu_{n_k}\}$ such that
$\nu_{n_k}\in\calA^*(\mu_{n_k})$ and $\nu_{n_k}\to \nu$
as $n_k\to\infty$.
The map $\mu\mapsto\calA^*(\mu)$ is said to be \emph{continuous} if it is both
upper and
lower hemicontinuous.
\end{definition}

We have the following general lemma.

\begin{lemma}\label{L3.3}
Suppose that
\begin{itemize}
\item[(a)]
$r:\Rd\times\Act\times\calP(\Rd)\to \R_+$ is continuous;
\smallskip
\item[(b)] $\mu\mapsto\Tilde\varrho_{\mu}$ is upper semicontinuous;
\smallskip
\item[(c)] whenever $\mu_{n}\to\mu$, then 
$\calA^*(\mu_{n})$ is tight along some subsequence\,.
\end{itemize}
Then $\mu\mapsto\calA^*(\mu)$ is upper hemicontinuous,
and $\mu\mapsto\Tilde\varrho_{\mu}$ is continuous.
\end{lemma}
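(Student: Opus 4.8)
The plan is to run a single weak-compactness argument on the convex, weakly closed set $\eG\subset\calP(\Rd\times\Act)$ and read off both conclusions from it. Fix $\mu_n\to\mu$ in $\calP(\Rd)$ and a sequence $\nu_n\in\calA^*(\mu_n)$; this sequence is \emph{given} in the upper hemicontinuity statement, and \emph{chosen} arbitrarily when proving the continuity of $\mu\mapsto\Tilde\varrho_\mu$, the latter choice being possible because under the standing hypotheses (H1) or (H2), Theorem~\ref{T3.1} guarantees that $\Tilde\varrho_{\mu_n}$ is attained, i.e.\ $\calA(\mu_n)\neq\emptyset$ (if $\calA^*(\mu_n)$ is empty for all large $n$, the upper hemicontinuity claim is vacuous). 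Fix $v_n\in\Ussm$ with $\uppi_n\df\nu_n\cast v_n\in\calA(\mu_n)$, so that $\uppi_n(r_{\mu_n})=\Tilde\varrho_{\mu_n}$ by optimality.

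First I would extract a convergent subsequence. By hypothesis (c), along some subsequence the sequence $\{\nu_n\}$ is tight; since $\Act$ is compact this makes $\{\uppi_n\}$ tight in $\calP(\Rd\times\Act)$, so after a further passage to a subsequence we may assume $\uppi_n\to\uppi$ weakly, whence also $\nu_n\to\nu$ weakly, where $\nu$ is the $\Rd$-marginal of $\uppi$ (the marginal map being weakly continuous). For each $f\in\calC^2_c(\Rd)$ the integrand $(x,u)\mapsto L^uf(x)$ is bounded and continuous on $\Rd\times\Act$, so $\uppi\mapsto\int_{\Rd\times\Act} L^uf(x)\,\uppi(\D{x},\D{u})$ is weakly continuous and $\eG$ is weakly closed (cf.\ \cite[Lemma~3.2.3]{ari-bor-ghosh}); hence $\uppi\in\eG$, and disintegrating $\uppi$ we obtain $\uppi=\nu\cast v$ for some $v\in\Ussm$, so that $\nu\in\eH$.

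Next I would pass the running cost to the limit. By the joint continuity assumption (a), $r_{\mu_n}\to r_\mu$ continuously, i.e.\ $r(x_n,u_n,\mu_n)\to r(x,u,\mu)$ whenever $(x_n,u_n)\to(x,u)$. Realizing $\uppi_n\to\uppi$ as almost sure convergence via the Skorokhod representation theorem on the Polish space $\Rd\times\Act$, and using $r\ge0$, Fatou's lemma gives $\uppi(r_\mu)\le\liminf_n\uppi_n(r_{\mu_n})=\liminf_n\Tilde\varrho_{\mu_n}$. Since $\uppi\in\eG$ we have $\uppi(r_\mu)\ge\Tilde\varrho_\mu$, while (b) gives $\limsup_n\Tilde\varrho_{\mu_n}\le\Tilde\varrho_\mu$; along the extracted subsequence we thus obtain $\Tilde\varrho_\mu\le\uppi(r_\mu)\le\liminf_n\Tilde\varrho_{\mu_n}\le\limsup_n\Tilde\varrho_{\mu_n}\le\Tilde\varrho_\mu$, forcing $\uppi(r_\mu)=\Tilde\varrho_\mu$ and $\Tilde\varrho_{\mu_n}\to\Tilde\varrho_\mu$. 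The equality $\uppi(r_\mu)=\Tilde\varrho_\mu$ says $\uppi\in\calA(\mu)$, hence $\nu\in\calA^*(\mu)$; as $\nu$ is a subsequential limit of $\{\nu_n\}$, this is exactly upper hemicontinuity. Since the same extraction can be performed starting from an arbitrary subsequence of $(\mu_n)$ (each still converging to $\mu$), the convergence $\Tilde\varrho_{\mu_n}\to\Tilde\varrho_\mu$ holds for the full sequence, giving the continuity of $\mu\mapsto\Tilde\varrho_\mu$.

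The step I expect to be the crux is the passage to the limit in the cost: the integrand $r_{\mu_n}$ varies with $n$ and, in the generality of (H1), is only near-monotone rather than bounded or uniformly integrable, so a portmanteau-type estimate is not available and one genuinely needs the Skorokhod/Fatou device — this is also where assumption (a) is used in full (joint) strength. Two secondary points need care: the bookkeeping of the nested subsequences, since (c) supplies tightness only along \emph{some} subsequence, so that the full-sequence convergence of $\Tilde\varrho$ is obtained by the routine subsequence-of-subsequence argument; and the fact that, outside the models (H1)/(H2), the infima defining $\Tilde\varrho$ need not be attained, so the existence of the minimizers $\uppi_n$ used above should be understood as furnished by Theorem~\ref{T3.1}.
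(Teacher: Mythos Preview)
Your proof is correct and follows essentially the same route as the paper's: pass from $\calA^*$ to $\calA$, use hypothesis~(c) and compactness of $\Act$ to extract a subsequential limit $\Hat\uppi\in\eG$, obtain $\liminf_n\Tilde\varrho_{\mu_n}\ge\Hat\uppi(r_\mu)\ge\Tilde\varrho_\mu$ by lower semicontinuity, and close the chain with assumption~(b). The paper's version is terser---it simply asserts the inequality $\liminf_n\uppi_n(r_{\mu_n})\ge\Hat\uppi(r_\mu)$ as a lower-semicontinuity fact, whereas you spell it out via Skorokhod representation and Fatou's lemma---but the logical structure is identical.
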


\begin{proof}
Since $\Usm$ is compact under the topology of Markov controls,
and (c) holds, it
is enough to show that $\mu\mapsto\calA(\mu)$ is upper hemicontinuous.
So suppose $\mu_n\to\mu$ as $n\to\infty$ and $\uppi_{n}\in\calA(\mu_{n})$.
Let $\Hat\uppi$ be the limit of $\uppi_{n}$ along some subsequence also
denoted as $\{\uppi_{n}\}$.
Then,
\begin{align}\label{Eusc}
\liminf_{n\to\infty}\;\Tilde\varrho_{\mu_{n}}&\;=\;
\liminf_{n\to\infty}\;\uppi_{n}(r_{\mu_{n}})\nonumber\\[5pt]
&\;\ge\;\Hat\uppi(r_{\mu})\nonumber\\[5pt]
&\;\ge\;\Tilde\varrho_{\mu}\,.
\end{align}
Since by hypothesis $\limsup_{n\to\infty}\;\Tilde\varrho_{\mu_{n}}\le
\Tilde\varrho_{\mu}$, equality follows in \eqref{Eusc}.
Since $\Hat\uppi\in\eG$ and
$\Hat\uppi(r_{\mu})=\Tilde\varrho_{\mu}$, we have that $\Hat\uppi\in\calA(\mu)$,
and upper hemicontinuity
of $\mu\mapsto\calA(\mu)$ follows.
Moreover, it follows by \eqref{Eusc} and (b)
that $\mu\mapsto\Tilde\varrho_{\mu}$ is necessarily continuous.
\end{proof}

Consider the model in (H1).
Note that  Assumption~\ref{A2.1}\,(C2) implies \eqref{A2.1a}
and \eqref{A2.1b}.

\begin{lemma}\label{L3.4}
Suppose that Assumptions~\ref{A2.1}\,(C1) and \ref{A2.2}\,(i) hold.
Then $\mu\mapsto\calA^*(\mu)$ is upper hemicontinuous on $\eH$.
\end{lemma}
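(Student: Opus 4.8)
The plan is to verify the three hypotheses of Lemma~\ref{L3.3}, from which the conclusion follows immediately. Hypothesis~(a) holds since $r$ is continuous by Assumption~\ref{A2.1}\,(C1), so the substance is to establish the upper semicontinuity of $\mu\mapsto\Tilde\varrho_{\mu}$ (hypothesis~(b)) and the subsequential tightness of $\calA^{*}(\mu_{n})$ whenever $\mu_{n}\to\mu$ in $\eH$ (hypothesis~(c)). Before either, I would note that hypothesis~(H1) is in force for each $r_{\mu_{n}}$ and for $r_{\mu}$: if $\mu_{n}\to\mu$ in $\eH$ and $\cK\df\{\mu_{n}:n\in\NN\}\cup\{\mu\}$, then \eqref{A2.1b} gives $\liminf_{\abs{x}\to\infty}\inf_{u\in\Act}r_{\mu'}(x,u)=\infty$ for all $\mu'\in\cK$, while Assumption~\ref{A2.2}\,(i) and the Remark following it give $\Tilde\varrho_{\mu'}\le\uppi_{0}(r_{\mu'})<\infty$ for all $\mu'\in\eH$. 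Thus Theorem~\ref{T3.1} applies, $\calA^{*}(\mu_{n})$ is nonempty by Lemma~\ref{L3.1}, and the characterization of $\calA(\cdot)$ invoked inside the proof of Lemma~\ref{L3.3} is available.

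Fix $\mu_{n}\to\mu$ in $\eH$ and let $\cK$ be as above. Assumption~\ref{A2.1}\,(C1) supplies two facts that I would use repeatedly. First, by \eqref{A2.1b}, the function $\Phi(R)\df\inf\{r(x,u,\mu')\,:\,\abs{x}\ge R,\ u\in\Act,\ \mu'\in\cK\}$ satisfies $\Phi(R)\to\infty$ as $R\to\infty$. Second, by \eqref{A2.1a} applied to $\cK$, there are $\theta>0$ and $R_{1}>0$ with $r_{\mu_{n}}(x,u)\le\theta^{-1}r_{\mu}(x,u)$ for all $\abs{x}\ge R_{1}$, $u\in\Act$, and all $n$; hence $\theta^{-1}r_{\mu}$ is an $n$-uniform dominating function for $r_{\mu_{n}}$ on $B^{c}_{R_{1}}\times\Act$, which is $\uppi$-integrable for every $\uppi\in\eG$ with $\uppi(r_{\mu})<\infty$.

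For hypothesis~(b) there is nothing to prove when $\Tilde\varrho_{\mu}=\infty$, so assume $\Tilde\varrho_{\mu}<\infty$, fix $\eps>0$, and choose $\uppi\in\eG$ with $\uppi(r_{\mu})\le\Tilde\varrho_{\mu}+\eps$. Splitting $\uppi(r_{\mu_{n}})=\int_{B_{R_{1}}\times\Act}r_{\mu_{n}}\,\D{\uppi}+\int_{B^{c}_{R_{1}}\times\Act}r_{\mu_{n}}\,\D{\uppi}$, the first integral tends to $\int_{B_{R_{1}}\times\Act}r_{\mu}\,\D{\uppi}$ by the uniform convergence $r_{\mu_{n}}\to r_{\mu}$ on the compact set $\overline{B}_{R_{1}}\times\Act$ (a consequence of the joint continuity of $r$ and $\mu_{n}\to\mu$), and the second tends to $\int_{B^{c}_{R_{1}}\times\Act}r_{\mu}\,\D{\uppi}$ by dominated convergence, using the pointwise convergence $r_{\mu_{n}}\to r_{\mu}$ and the majorant $\theta^{-1}r_{\mu}$. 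Therefore $\uppi(r_{\mu_{n}})\to\uppi(r_{\mu})$, so $\limsup_{n}\Tilde\varrho_{\mu_{n}}\le\lim_{n}\uppi(r_{\mu_{n}})=\uppi(r_{\mu})\le\Tilde\varrho_{\mu}+\eps$; letting $\eps\downarrow0$ proves~(b), and in particular $\varrho^{*}\df\sup_{n}\Tilde\varrho_{\mu_{n}}<\infty$ since $\Tilde\varrho_{\mu}<\infty$.

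For hypothesis~(c), take $\nu_{n}\in\calA^{*}(\mu_{n})$ and a corresponding optimal $\uppi_{n}=\nu_{n}\cast v_{n}\in\calA(\mu_{n})$, so that $\uppi_{n}(r_{\mu_{n}})=\Tilde\varrho_{\mu_{n}}\le\varrho^{*}$. Then for $R$ large enough that $\Phi(R)>0$, since $r_{\mu_{n}}\ge\Phi(R)$ on $B^{c}_{R}\times\Act$ and $r_{\mu_{n}}\ge0$ elsewhere,
\begin{equation*}
\Phi(R)\,\uppi_{n}\bigl(B^{c}_{R}\times\Act\bigr)\;\le\;\int_{B^{c}_{R}\times\Act}r_{\mu_{n}}\,\D{\uppi_{n}}\;\le\;\varrho^{*}\,,
\end{equation*}
whence $\uppi_{n}(B^{c}_{R}\times\Act)\le\varrho^{*}/\Phi(R)\to0$ as $R\to\infty$, uniformly in $n$; so $\{\uppi_{n}\}$, and hence $\{\nu_{n}\}$, is tight. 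With (a)--(c) established, Lemma~\ref{L3.3} yields that $\mu\mapsto\calA^{*}(\mu)$ is upper hemicontinuous on $\eH$. I expect the main obstacle to be the step in the second paragraph: producing from \eqref{A2.1a} an $n$-independent radius $R_{1}$ and a single $\uppi$-integrable majorant, i.e.\ exploiting the near-monotone structure of (C1) uniformly along $\{\mu_{n}\}$ rather than only pairwise; the remaining ingredients are routine or already quoted from \cite{ari-bor-ghosh}.
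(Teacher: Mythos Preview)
Your proof is correct and follows exactly the paper's route: verify the three hypotheses of Lemma~\ref{L3.3} and conclude. The paper's own argument is much terser---it simply asserts that finiteness of one $\Tilde\varrho_{\mu}$ propagates to all of $\eH$ via \eqref{A2.1a}, that $\bigcup_{\mu\in\cK}\calA^{*}(\mu)$ is tight, and that continuity of $\mu\mapsto\Tilde\varrho_{\mu}$ is ``routine''---so what you have written is a genuine fleshing-out of the same skeleton, with your tightness step~(c) resting cleanly on \eqref{A2.1b} (which is manifestly uniform over $\cK$) and your upper semicontinuity step~(b) resting on \eqref{A2.1a}. The concern you flag at the end---extracting a single radius $R_{1}$ valid for all $\mu_{n}$ from \eqref{A2.1a}---is precisely the point the paper leaves implicit; the paper evidently reads \eqref{A2.1a} as uniform over the compact $\cK$, and under that reading your dominated-convergence argument goes through without further work.
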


\begin{proof}
It is evident that since $\Tilde\varrho_{\mu}$ is finite for some $\mu\in\eH$,
then \eqref{A2.1a} implies that it is finite for all $\mu\in\eH$.
It then follows by \eqref{A2.1a} that $\cup_{\mu\in\cK}\calA^{*}(\mu)$ is
tight, and it is routine to show that this together with \eqref{A2.1a} imply
that $\mu\mapsto\Tilde\varrho_{\mu}$ is continuous on $\eH$.
The result then follows by Lemma~\ref{L3.3}.
\end{proof}

Next we turn to the model in (H2).
By \cite[Theorem~3.7.2]{ari-bor-ghosh},  Assumption~\ref{A2.1}\,(C3)
is equivalent to
\begin{equation*}
\sup_{\uppi\in\eG}\; \int_{B^{c}_{R}\times\Act} \sup_{\mu\in\eH}\,r_{\mu}(x,u)
\,\uppi(\D{x},\D{u})\;\xrightarrow[R\to\infty]{}\;0\,.
\end{equation*}
We work under a weaker hypothesis.

\begin{lemma}\label{L3.5}
Let (H2) hold and suppose that
\begin{equation*}
\sup_{\uppi\in\eG}\; \sup_{\mu\in\eH}\;\int_{B^{c}_{R}\times\Act} \,r_{\mu}(x,u)
\,\uppi(\D{x},\D{u})\;\xrightarrow[R\to\infty]{}\;0\,.
\end{equation*}
Then $\mu\mapsto\calA^*(\mu)$ is upper hemicontinuous on $\eH$.
\end{lemma}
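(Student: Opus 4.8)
The plan is to verify that the hypotheses (a)--(c) of Lemma~\ref{L3.3} hold under (H2) together with the stated weaker uniform integrability condition, and then invoke Lemma~\ref{L3.3}. Hypothesis (a) is part of the standing assumptions for models in (H1)--(H2), so nothing is needed there. Hypothesis (c) is immediate: under (H2) the set $\eH$ is compact, hence $\calA^*(\mu_n)\subset\eH$ is automatically tight for every sequence $\{\mu_n\}$. So the entire content of the proof is to establish hypothesis (b): the map $\mu\mapsto\Tilde\varrho_{\mu}$ is upper semicontinuous on $\eH$.

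To prove (b), fix a sequence $\mu_n\to\mu$ in $\eH$. Pick $\Bar\uppi\in\calA(\mu)$, i.e. an ergodic occupation measure with $\Bar\uppi(r_\mu)=\Tilde\varrho_\mu$ (such $\Bar\uppi$ exists under (H2) by Theorem~\ref{T3.1} and Lemma~\ref{L3.1}). Since $\Tilde\varrho_{\mu_n}\le\Bar\uppi(r_{\mu_n})$ by definition of the infimum over $\eG$, it suffices to show $\limsup_{n\to\infty}\Bar\uppi(r_{\mu_n})\le\Bar\uppi(r_\mu)$. Here I would split $\Bar\uppi(r_{\mu_n})$ over $B_R\times\Act$ and its complement. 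On $B_R\times\Act$, the continuity of $r$ in the measure argument (hypothesis (a)) together with the fact that $r$ is bounded on the compact set $\overline{B_R}\times\Act\times\overline{\{\mu_n\}\cup\{\mu\}}$ gives $\int_{B_R\times\Act} r_{\mu_n}\,\D\Bar\uppi \to \int_{B_R\times\Act} r_{\mu}\,\D\Bar\uppi$ as $n\to\infty$, by dominated convergence. On the complement, the tail is controlled by the displayed hypothesis of the lemma: $\sup_{\uppi\in\eG}\sup_{\mu'\in\eH}\int_{B^c_R\times\Act} r_{\mu'}\,\D\uppi\to 0$ as $R\to\infty$, so given $\eps>0$ we may fix $R$ large enough that $\int_{B^c_R\times\Act} r_{\mu_n}\,\D\Bar\uppi<\eps$ uniformly in $n$ (and the same bound holds for $r_\mu$). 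Combining the two pieces and letting $\eps\downarrow0$ yields $\limsup_n \Bar\uppi(r_{\mu_n})\le \Bar\uppi(r_\mu)$, hence $\limsup_n \Tilde\varrho_{\mu_n}\le\Tilde\varrho_\mu$, which is (b).

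With (a)--(c) in hand, Lemma~\ref{L3.3} gives that $\mu\mapsto\calA^*(\mu)$ is upper hemicontinuous on $\eH$ (and, as a bonus, $\mu\mapsto\Tilde\varrho_\mu$ is continuous), completing the proof. The only genuinely delicate point is the uniform (in $n$) tail estimate on $B^c_R\times\Act$: one must be careful that the single measure $\Bar\uppi$ is fixed while the integrand $r_{\mu_n}$ varies, so it is precisely the $\sup_{\mu\in\eH}$ appearing in the lemma's hypothesis that makes the tail bound uniform in $n$; without it one could not interchange the limit in $n$ with the limit in $R$. Everything else is a routine application of dominated convergence on compacts plus the definition of $\Tilde\varrho$ as an infimum over the fixed set $\eG$.
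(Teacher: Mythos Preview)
Your proof is correct and follows essentially the same route as the paper: both verify hypothesis~(b) of Lemma~\ref{L3.3} by fixing an optimal $\Bar\uppi\in\calA(\mu)$ and showing $\limsup_n \Bar\uppi(r_{\mu_n})\le\Bar\uppi(r_\mu)$, with~(c) immediate from compactness of $\eH$ under~(H2). The only difference is cosmetic---the paper compresses your $B_R$/$B_R^c$ splitting into the single phrase ``by uniform integrability,'' whereas you spell out the dominated-convergence-plus-uniform-tail argument explicitly.
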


\begin{proof}
If $\mu_n\to\mu$ and $\Bar\uppi^\mu\in\eG$ is optimal relative
to $r_{\mu}$, then by uniform integrability we have
\begin{equation*}
\limsup_{n\to\infty}\;\Tilde\varrho_{\mu_{n}}\;\le\;
\limsup_{n\to\infty}\;\Bar\uppi^\mu(r_{\mu_{n}})
\;=\; \Tilde\varrho_{\mu}\,,
\end{equation*}
which implies that $\mu\mapsto\Tilde\varrho_{\mu}$ is continuous.
The result then follows by Lemma~\ref{L3.3}.
\end{proof}

Lemmas~\ref{L3.4} and \ref{L3.5} imply the following.

\begin{corollary}\label{C3.1}
Let Assumptions~\ref{A2.1}--\ref{A2.2} hold.
Then $\mu\mapsto\calA^*(\mu)$ is upper hemicontinuous on $\eH$.
\end{corollary}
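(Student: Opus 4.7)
The plan is to argue by cases according to which alternative of Assumption~\ref{A2.1} is in force, reducing each case to either Lemma~\ref{L3.4} or Lemma~\ref{L3.5}. Under (C1), the statement is precisely the content of Lemma~\ref{L3.4}, so nothing further is required.

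Under (C2), the strategy is to verify that the explicit form $r_\mu(x,u) = \mathring{r}(x,u) + F(x,\mu)$ forces conditions \eqref{A2.1a} and \eqref{A2.1b} of (C1), and then to apply Lemma~\ref{L3.4}. Condition \eqref{A2.1b} is immediate since $F \geq 0$ and $\min_u \mathring{r}(x,u)/(1+\abs{x}^p) \to \infty$. For \eqref{A2.1a}, I would first observe that one need only check upper hemicontinuity at $\mu \in \eH$ with $\Tilde{\varrho}_\mu < \infty$ (otherwise $\calA^*(\mu)$ is empty and upper hemicontinuity is vacuous at $\mu$); finiteness of the cost together with the given bound on $F$ forces such measures to have finite $p$-th moment, so any compact $\cK \subset \eH$ of interest may be taken to lie in $\calP_p(\Rd)$ and to be compact in the $\eD_p$-topology. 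By Proposition~\ref{P-villani}, $\sup_{\mu \in \cK} \int \abs{x}^p\,\mu(\D{x}) < \infty$, which combined with the growth bound on $F$ gives $F(x,\mu) \leq C_\cK\bigl(1 + \abs{x}^p\bigr)$ uniformly in $\mu \in \cK$. Since $\mathring{r}$ dominates $1+\abs{x}^p$ at infinity, the ratio $r(x,u,\mu)/r(x,u,\mu')$ stays bounded below by a positive constant for large $\abs{x}$, which is exactly \eqref{A2.1a}.

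Under (C3), the plan is to apply Lemma~\ref{L3.5}. By the equivalence quoted from \cite[Theorem~3.7.2]{ari-bor-ghosh} immediately before Lemma~\ref{L3.5}, Assumption~\ref{A2.1}(C3) is equivalent to the strong uniform tail condition
\begin{equation*}
\sup_{\uppi \in \eG} \int_{B_R^c \times \Act} \sup_{\mu \in \eH} r_\mu(x,u)\,\uppi(\D{x},\D{u}) \;\xrightarrow[R\to\infty]{}\;0\,,
\end{equation*}
which dominates the (weaker) hypothesis of Lemma~\ref{L3.5}, so the conclusion follows. The only non-routine step is the (C2) reduction, where one must correctly identify the relevant compact subsets of $\eH$ as living in $(\calP_p(\Rd),\eD_p)$ rather than in $(\calP(\Rd),\dd)$, so that Proposition~\ref{P-villani} delivers the uniform moment bound needed to control $F(\cdot,\mu)$. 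Once this is recognized, both lemmas apply essentially off the shelf.
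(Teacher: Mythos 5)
Your proposal is correct and follows the paper's own route: the corollary is obtained by exactly the same three-way case split, with (C1) and (C2) handled by Lemma~\ref{L3.4} (using the observation, stated just before that lemma, that (C2) implies \eqref{A2.1a}--\eqref{A2.1b}, which you verify via the uniform $p$-th moment bound on $\eD_p$-compact subsets of $\calP_p(\Rd)$) and (C3) handled by Lemma~\ref{L3.5} through the quoted equivalence with the stronger uniform integrability condition. The only quibble is your parenthetical that $\calA^*(\mu)$ is empty when $\Tilde\varrho_\mu=\infty$ (in that degenerate case every $\uppi\in\eG$ is trivially optimal, so $\calA^*(\mu)=\eH$), but this is immaterial since Assumption~\ref{A2.2}\,(i) guarantees $\Tilde\varrho_\mu<\infty$ on the relevant domain.
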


In order to apply the fixed point theorem
it remains to show that there exists some nonempty, convex and compact set
$\cK\subset\eH$ such
that $\calA^{*}(\mu)\subset\cK$ for all $\mu\in\cK$.
For the models satisfying (H2) we can select $\cK\equiv\eH$.

We have the following lemma.

\begin{lemma}\label{L3.6}
Let Assumptions~\ref{A2.1} and \ref{A2.2} hold.
There exists a non-empty, convex and compact set $\calK\subset\calP(\Rd)$
such that for $\mu\in\calK$ we have $\calA^*(\mu)\subset\calK$. \end{lemma}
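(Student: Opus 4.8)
The plan is to exhibit the invariant set $\calK$ case by case according to the three structural classes in Assumption~\ref{A2.1}, since the relevant topology differs among them ($\dd$ for (C1) and (C3), $\eD_{p}$ for (C2)). In the case of (C3)---equivalently the model (H2)---there is nothing to do: $\eH$ is convex, it is compact by hypothesis, and it is nonempty since it contains the measure $\mu_{0}$ from Assumption~\ref{A2.2}\,(i); moreover $\calA^{*}(\mu)\subset\eH$ for every $\mu$ by definition, so one simply takes $\calK\df\eH$.

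For a model satisfying (C1) I would localize the optimal ergodic occupation measures using Assumption~\ref{A2.2}\,(ii). First, replace the set $\sK$ there by its closed convex hull: this is again compact (convex combinations of a tight family are tight, and the closure of a tight set is tight), it is still contained in $\eG$ (which is closed and convex by \cite[Lemma~3.2.3]{ari-bor-ghosh}), and it still satisfies the displayed inequality $\uppi(r_{\mu})>\Tilde\varrho_{\mu}$ on $\eG\cap\sK^{c}$ because enlarging $\sK$ shrinks $\sK^{c}$; so we may assume $\sK$ is convex. Now if $\uppi\in\calA(\mu)$ for some $\mu\in\eH$, then $\uppi\in\eG$ and $\uppi(r_{\mu})=\Tilde\varrho_{\mu}$, so $\uppi$ cannot lie in $\eG\cap\sK^{c}$, i.e.\ $\uppi\in\sK$; hence $\calA(\mu)\subset\sK$ and therefore $\calA^{*}(\mu)\subset\eH[\sK]$ for every $\mu\in\eH$. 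Since $\sK\subset\eG$, disintegration identifies $\eH[\sK]$ with the image of $\sK$ under the (weakly continuous) marginal projection $\uppi\mapsto\mu_{v}$, so $\eH[\sK]$ is compact and convex, and it is nonempty because $\calA^{*}(\mu_{0})\neq\emptyset$ by Theorem~\ref{T3.1}. Taking $\calK\df\eH[\sK]$ then gives $\calA^{*}(\mu)\subset\calK$ for all $\mu\in\calK$.

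For a model satisfying (C2)---which I expect to be the main obstacle, since $\eH$ is no longer compact and the running cost itself contains the $p$-th moment of $\mu$---I would build $\calK$ directly from moment estimates in $\calP_{p}(\Rd)$. Using the growth bound on $F$ and the reference measure $\uppi_{0}$ (whose marginal lies in $\calP_{p}$, as the superlinearity of $\mathring r$ and $\uppi_{0}(r_{\mu_{0}})<\infty$ force), one gets a constant $C_{0}$ with $\Tilde\varrho_{\mu}\le\uppi_{0}(r_{\mu})\le C_{0}\bigl(1+\int_{\Rd}\abs{x}^{p}\,\mu(\D{x})\bigr)$. Writing $M_{R}\df\inf_{\abs{x}\ge R,\,u\in\Act}\mathring r(x,u)/\abs{x}^{p}$, the superlinear growth of $\mathring r$ gives $M_{R}\to\infty$; and if $\mu'\in\calA^{*}(\mu)$ is the marginal of an optimal $\uppi'\in\calA(\mu)$, then, using $F\ge0$ and $\uppi'(r_{\mu})=\Tilde\varrho_{\mu}$,
\begin{equation*}
\int_{B_{R}^{c}}\abs{x}^{p}\,\mu'(\D{x})\;\le\;\frac{\uppi'(\mathring r)}{M_{R}}\;\le\;\frac{\Tilde\varrho_{\mu}}{M_{R}}\,,\qquad
\int_{\Rd}\abs{x}^{p}\,\mu'(\D{x})\;\le\;R^{p}+\frac{\Tilde\varrho_{\mu}}{M_{R}}\qquad\forall\,R>0\,.
\end{equation*}
Fix $R_{0}$ large enough that $C_{0}/M_{R_{0}}<1$, then pick $m$ large enough that $R_{0}^{p}+C_{0}(1+m)/M_{R_{0}}\le m$, and set $\phi(R)\df C_{0}(1+m)/M_{R}\to0$. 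I would then take
\begin{equation*}
\calK\;\df\;\Bigl\{\mu\in\calP_{p}(\Rd)\,\colon\,\int_{\Rd}\abs{x}^{p}\,\mu(\D{x})\le m\ \text{and}\ \int_{B_{R}^{c}}\abs{x}^{p}\,\mu(\D{x})\le\phi(R)\ \text{for all}\ R>0\Bigr\}\,.
\end{equation*}
Every defining condition is a sublevel set of a convex, $\dd$-lower semicontinuous functional, so $\calK$ is convex and $\dd$-closed; the uniform bound on $p$-th moments makes it tight, hence $\dd$-compact, and the uniform-integrability condition is exactly condition~(3) of Proposition~\ref{P-villani}, so $\dd$-convergent subsequences of elements of $\calK$ in fact converge in $\eD_{p}$ and $\calK$ is compact in $(\calP_{p}(\Rd),\eD_{p})$. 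It is nonempty (e.g.\ it contains $\delta_{0}$). Finally, for $\mu\in\calK$ we have $\Tilde\varrho_{\mu}\le C_{0}(1+m)$, so the two displays above, with $R=R_{0}$ in the second, give $\int\abs{x}^{p}\,\mu'\le m$ and $\int_{B_{R}^{c}}\abs{x}^{p}\,\mu'\le\phi(R)$ for every $\mu'\in\calA^{*}(\mu)$ and all $R$; hence $\calA^{*}(\mu)\subset\calK$.

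The routine points I am glossing over are: continuity of the marginal projection on $\eG$ and nonemptiness of $\calA^{*}(\mu_{0})$ (from \cite{ari-bor-ghosh} and Theorem~\ref{T3.1}); the weak lower semicontinuity of $\mu\mapsto\int_{B_{R}^{c}}\abs{x}^{p}\,\D\mu$ and of $\mu\mapsto\int_{\Rd}\abs{x}^{p}\,\D\mu$ (portmanteau, the integrands being nonnegative and lower semicontinuous); and the elementary choice of $R_{0}$ and $m$. The one genuinely delicate step, where I would be most careful, is the moment estimate in the (C2) case: it is precisely because the $p$-th moment of any $\mu'\in\calA^{*}(\mu)$ is controlled by $\Tilde\varrho_{\mu}$ with a gain $M_{R_{0}}$ that can be made large \emph{before} fixing $m$ that the self-map property holds with a single $m$ independent of $\mu$. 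Combined with the upper hemicontinuity of $\mu\mapsto\calA^{*}(\mu)$ (Corollary~\ref{C3.1}, whose proof applies on any compact subset of $\calP(\Rd)$) and the nonemptiness, convexity and compactness of its values (Lemma~\ref{L3.1}), this $\calK$ supplies exactly what the Kakutani--Fan--Glicksberg theorem requires to produce the fixed point asserted in Theorem~\ref{T3.2}.
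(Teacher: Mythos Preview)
Your treatment of cases (C3) and (C1) matches the paper's proof: for (C3) one takes $\calK=\eH$, and for (C1) the paper forms $\calK=\overline{\conv\,\eH[\sK]}$ rather than your $\eH[\,\overline{\conv\sK}\,]$, but since the marginal projection $\eG\to\eH$ is linear and weakly continuous these two constructions agree, and your version is arguably tidier.

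For (C2) you take a genuinely different route. The paper stays inside $\eH$ and defines
\[
N_{R}\;=\;\Bigl\{\mu\in\eH\,\colon\, \int_{\Rd}\min_{u\in\Act}\mathring r(x,u)\,\mu(\D x)\le \uppi_{0}(\mathring r)+R\Bigr\}
\]
together with a companion set $M_R$ coming from the growth bound on $F$; one chooses $R_0$ with $N_{R_0}\subset M_{R_0}$ and checks that $\uppi(r_\mu)>\uppi_0(r_\mu)$ whenever the marginal of $\uppi$ lies outside $N_{R_0}$, so $\calA^*(\mu)\subset N_{R_0}$ for $\mu\in N_{R_0}$. Compactness of $N_{R_0}$ in $\calP_p$ is immediate because the superlinear growth of $\mathring r$ forces uniform $p$-integrability. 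Your approach instead builds $\calK$ directly from explicit moment and tail bounds in $\calP_p(\Rd)$, obtaining the self-map property from the chain $\Tilde\varrho_\mu\le \uppi_0(r_\mu)\le C_0(1+m)$ and $\uppi'(\mathring r)\le\Tilde\varrho_\mu$. This is more hands-on and does not require $\calK\subset\eH$, at the cost of having to check that upper hemicontinuity of $\calA^*$ (Corollary~\ref{C3.1}) extends off $\eH$; you correctly observe that the proof of Lemma~\ref{L3.3} does not use $\mu\in\eH$.

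There is one slip in your (C2) argument: the map $\mu\mapsto\int_{B_R^c}\abs{x}^p\,\D\mu$ is \emph{not} $\dd$-lower semicontinuous, because $B_R^c$ is closed and hence $\abs{x}^p\Ind_{B_R^c}$ is upper, not lower, semicontinuous. So your $\calK$ as written need not be $\dd$-closed. The fix is painless---either replace $B_R^c$ by the open set $\{\abs{x}>R\}$ in the definition of $\calK$ (your tail estimate is unaffected), or simply work with the $\eD_p$-closure of your $\calK$, which is still convex and compact, retains the moment bound $\int\abs{x}^p\,\D\mu\le m$ by lower semicontinuity, and therefore inherits $\calA^*(\mu)\subset\calK$ by the same computation.
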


\begin{proof}
Under Assumption~\ref{A2.1}\,(C3) we choose $\calK=\eH$.

Suppose that Assumption~\ref{A2.1}\,(C1) holds and let $\sK$ be as in
Assumption~\ref{A2.2}. Then 
$\eH\bigl[\sK\bigr]$ is compact under the total variation norm topology
\cite[Lemma~3.2.5]{ari-bor-ghosh}.
Therefore it follows that $\overline{\conv\eH\bigl[\sK\bigr]}$ is also compact
in the total variation norm topology \cite[Theorem~5.35]{ali-bor}.
Defining $\calK=\overline{\conv\eH\bigl[\sK\bigr]}$ we see that $\calK$ is a convex,
compact subset of
$\calP(\Rd)$.
Again by Assumption~\ref{A2.2}\,(ii) it easy to see that $\calK$ has required property.

Next, consider Assumption~\ref{A2.1}\,(C2),
and let $\uppi_{0}=\nu_{0}\cast v_{0}$ be as
in Assumption~\ref{A2.2}\,(i).
For $R>0$, let $M_{R}, N_{R}\subset\eH$ be defined by
\begin{align*}
M_{R}&\;\df\;\biggl\{\mu\in\eH\,\colon\,
\kappa_{0}\,\biggl(1+\int_{\Rd}\abs{x}^p\,\nu_{0}(\D{x})
+\int_{\Rd}\abs{x}^p\,\mu(\D{x})\biggr)\,\le\, R\biggr\}\,,\\[5pt]
N_{R}&\;\df\;\biggl\{\mu\in\eH\,\colon\,
\int_{\Rd}\Bigl(\min_{u\in\Act}\;\mathring{r}(x,u)\Bigr)\,\nu(\D{x})\;\le\;
\uppi_{0}(\mathring r) + R\biggr\}\,.
\end{align*}
By Assumption~\ref{A2.1}\,(C2),
there exists $R_{0}>0$ such that $M_{R_{0}}\supset N_{R_{0}}$.
It is evident that $N_{R_{0}}$ is convex and compact in $\Pm_{p}(\Rd)$.
Let $\mu\in N_{R_{0}}\subset M_{R_{0}}$.
If $\uppi=\nu\cast v\in\eG$, and $\nu\in N_{R_{0}}^{c}$, then
\begin{align*}
\uppi(\mathring r) + \int_{\Rd} F(x,\mu)\,\nu(\D{x})
&\;>\; \uppi_{0}(\mathring r) + R_{0}\nonumber\\[5pt]
&\;\ge\; \uppi_{0}(\mathring r) + 
\kappa_{0}\,\biggl(1+\int_{\Rd}\abs{x}^p\,\nu_{0}(\D{x})
+\int_{\Rd}\abs{x}^p\,\mu(\D{x})\biggr)\nonumber\\[5pt]
&\;\ge\; \uppi_{0}(\mathring r) + F(\nu_{0},\mu)\,,
\end{align*}
where the first inequality follows since $\nu\in N_{R_{0}}^{c}$,
while the second follows from the hypothesis that $\mu\in N_{R_{0}}\subset M_{R_{0}}$.
This of course implies that $\uppi\notin\cA(\mu)$.
Therefore $\cA^*(\mu)\in N_{R_{0}}$ for all $\mu\in N_{R_{0}}$.
This completes the proof.
\end{proof}

Next we prove Theorem~\ref{T3.2}.

\begin{proof}[Proof of Theorem~\ref{T3.2}.]\,
Consider the map $\mu\in\calK\mapsto \calA^*(\mu)\in 2^{\calK}$ where $\calK$
is chosen from Lemma~\ref{L3.6}.
We note that $\calK$ is a non-empty, convex and compact
subset of $\calM(\Rd)$ which is a locally convex Hausdorff space under
the weak topology.
By Lemma~\ref{L3.1} $\calA^*(\mu)$ is non-empty, convex and compact.
From Lemma~\ref{L3.1},
Corollary~\ref{C3.1} and \cite[Theorem~17.10]{ali-bor} we conclude that the map
$\mu\mapsto A^*(\mu)$ has closed graph.
Therefore applying the Kakutani--Fan--Glicksberg fixed point theorem
(see \cite[Corollary~17.55]{ali-bor}) there exists $\mu\in\calK$
satisfying $\mu\in\calA^*(\mu)$.
This proves the existence of a relaxed MFG solution in
the sense of Definition~\ref{DMFG2}.
 
Suppose now that $\Act$ is convex and
$u\mapsto h(p, x,u, \mu)$ is strictly convex for all
$x, p\in\Rd$, $\mu\in\calP(\Rd)$.
Then we can find a unique continuous,
strict Markov control $v : \Rd\to \Act$ such that 
\begin{equation*}
\min_{u\in\Act}\;\bigl[L^u V_\mu(x) + r_\mu(x,u)\bigr]\;=\;
L^v V_\mu(x) + r_\mu\bigl(x,v(x)\bigr) \qquad \forall\,x\in\Rd\,.
\end{equation*}
Note that in this case $\cA^{*}(\mu)$ is a singleton,
and $\mu\mapsto\calA^*(\mu)$ is continuous in $\Pm(\Rd)$.
Hence, an application of the Schauder--Tychonoff fixed point theorem
suffices to assert existence of a strict MFG solution.
\end{proof}

\begin{remark}
It is possible to allow the drift $b$ to depend on the measure $\mu$.
In case (C3) we can even consider a continuous 
$b: \Rd\times\Act\times\calP(\Rd)\to \Rd$ such that $b(\cdot\,, u, \mu)$ is
locally Lipschitz uniformly with respect to $u\in\Act$ for all $\mu\in\calP(\Rd)$.
The argument in the proof of
 Theorem~\ref{T3.2} holds in this case if \eqref{US} is satisfied.
In particular, consider $b(x,u, \mu) \equiv b(x,u) + e(\mu)$
for some bounded continuous
vector valued map $e:\calP(\Rd)\to\Rd$ where $b$ satisfies the following:
there exists $\calV, h$ satisfying \eqref{US} when the operator $L$ is defined
using the drift $b(x,u)$ and $\abs{\grad V}\in\sorder(h)$. Then
it is easy to see that \eqref{US} holds for the original drift $b(x,u, \mu)$
with the same functions $\calV$ and $h$.
\end{remark}

\begin{proof}[Proof of Theorem~\ref{T2.1}.]
Consider a MFG solution $\mu$ in the sense of Definition~\ref{DMFG2}
and take a relaxed/strict control $v\in\Ussm$ associated to it
in $\calA(\mu)$. The existence of such a $v$ is
assured by Theorem~\ref{T3.2}.
We know that there exists a unique strong Markov process corresponding to
$v$ satisfying \eqref{E-sde} i.e.,
\begin{equation*}
\D{X}_{t} \;=\;b(X_{t}, v(X_t))\,\D{t} + \upsigma(X_{t})\,\D{W}_{t}\,,
\quad X_0=x\, .
\end{equation*}
By definition, $\mu$ is the unique invariant probability
measure of the process $X$ under the control $v$.
Let $\eta\in\calC([0, \infty), \calP(\Rd))$ be the path of transition
probabilities of this process.
It suffices to show that $J_x(v, \eta)=\Tilde{\varrho}_\mu$ for all
$x\in\Rd$, and that
for any admissible control $U\in\Uadm$ we have 
\begin{equation}\label{T2.1a}
J_x(U,\eta)\;\ge\; \Tilde{\varrho}_\mu\qquad \forall\, U\in\Uadm\,,
\qquad\forall\,x\in\Rd\,.
\end{equation}
where $J$ is defined by \eqref{EJx}.
We divide the proof in three cases.

\noindent\emph{Case~1.}
Consider models satisfying (C3). Applying \cite[Proposition~2.6]{ichihara-sheu}
it follows that there exists a compact set $G\in\calP(\Rd)$ such that
$\eta_{t}\in G$ for all $t\ge0$.
Therefore
\begin{equation}\label{T2.1b}
r_{\eta_{t}}\;\le\;
\sup_{\nu\in G}\;r_\nu \;\in\; \sorder(h)\qquad\forall t\ge0\,.
\end{equation}
Also by \eqref{US}, it follows that
\begin{equation*}
\limsup_{T\to\infty}\;\frac{1}{T}\;\Exp^U_x
\biggl[\int_0^T h(X_t,U_t)\,\D{t}\biggl]\;\le\;
\frac{1}{c_{2}}\bigr(c_{1}-\Lyap(x)\bigr)
\qquad\forall\,U\in\Uadm\,.
\end{equation*}
This shows that
\begin{equation}\label{T2.1c}
\limsup_{T\to\infty}\;\frac{1}{T}\;\Exp^U_x
\biggl[\int_0^T \Ind_{B_{R}^c}(X_t)\sup_{\nu\in G}\,
r_\nu(X_t,U_t)\,\D{t}\biggr]\;\xrightarrow[R\to\infty]{} 0
\qquad\forall\,U\in\Uadm\,.
\end{equation}
Therefore since $r$ is continuous and $\eta_t\to\mu$ in $\Pm(\Rd)$ as $t\to\infty$,
we obtain by \eqref{T2.1b} and \eqref{T2.1c} that
\begin{equation}\label{T2.1d}
\limsup_{T\to\infty}\;\frac{1}{T}\;\Exp^v_x\biggl[
\int_0^T r(X_t, v(X_t), \eta_t)\,\D{t}\biggr]\;=\;
\limsup_{T\to\infty}\;\frac{1}{T}\;\Exp^v_x
\biggl[\int_0^T r(X_t,  v(X_t), \mu)\,\D{t}\biggr]\;=\;\Tilde\varrho_\mu\,.
\end{equation}
It remains to show that
\begin{equation}\label{T2.1e}
\limsup_{T\to\infty}\;\frac{1}{T}\;\Exp^U_x\biggl[
\int_0^T r(X_t, U_t, \eta_t)\,\D{t}\biggr]\; \ge\; \Tilde\varrho_\mu
\qquad\forall\,U\in\Uadm\,.
\end{equation}
For this purpose, we consider a smooth cut-off function $\phi_R$ that equals
$1$ on $B_R$ and vanishes outside $B_{R+1}$.
By $\upomega$ we denote the local modulus of continuity of $r$,
defined by
\begin{multline*}
\upomega(R, G, \eps)\;\df\; \sup\;\bigl\{\abs{r(x,u,\mu
- r(\Bar{x},\Bar{u}, \Bar{\mu})} \;\colon\; \abs{x-\Bar{x}}
+ d_\Act(u, \Bar{u}) + \dd(\mu,\Bar{\mu})\le \eps, \\[3pt]
 x, \Bar{x}\in \Bar{B}_{R},~ \mu, \Bar{\mu}\in G\,,~
u, \Bar{u}\in\Act\bigr\}.
\end{multline*}
Since the mean empirical measures of the process $(X_t,U_t)$
are tight, applying Theorem~3.4.7 in \cite{ari-bor-ghosh}, we obtain
\begin{equation*}
\liminf_{T\to\infty}\;\frac{1}{T}\;\Exp^U_x\biggl[
\int_0^T \phi_R(X_t)\,r(X_t, U_t, \mu)\,\D{t}\biggr]
\;\ge\;\inf_{\uppi\in\eG}\;
\int_{\Rd\times\Act}\phi_{R}(x) r_\mu(x,u)\,\uppi(\D{x},\D{u})\,.
\end{equation*}
Thus, using the inequality
\begin{align*}
r(X_t, U_t, \eta_t)&\;\ge \;\phi_R(X_t)\,r(X_t, U_t, \eta_t)\nonumber\\[5pt]
&\;\ge\;\phi_R(X_t)\,r(X_t, U_t, \mu)
-\phi_R(X_t)\,\upomega\bigl(R+1,G,\dd(\eta_t,\mu)\bigr),
\end{align*}
 and
the fact that $\dd(\eta_t,\mu)\to0$ as $t\to\infty$,
we obtain
\begin{equation*}
\liminf_{T\to\infty}\;\frac{1}{T}\;\Exp^U_x\biggl[
\int_0^T r(X_t, U_t, \eta_t)\,\D{t}\biggr]\;\ge \;
\inf_{\uppi\in\eG}\;
\int_{\Rd\times\Act}\phi_{R}(x) r_\mu(x,u)\,\uppi(\D{x},\D{u})\,.
\end{equation*}
Letting $R\to\infty$,  and using the fact $\Tilde\varrho_\mu$ is the optimal value,
we obtain \eqref{T2.1e}.

\noindent\emph{Case~2.}
We consider running costs satisfying (C1).
From the HJB equation we have
\begin{equation*}
L^v V_\mu(x) + r_\mu(x,v(x))\;=\; \Tilde{\varrho}_\mu\,,
\end{equation*}
with $\mu\in\cA^*(\mu)$.
Hence, by \cite[Lemma~3.7.2]{ari-bor-ghosh}, there exist nonnegative,
inf-compact functions $\Lyap\in\Cc^{2}(\Rd)$ and $h\in\Cc(\Rd)$
such that $r_\mu(\cdot,v(\cdot))\in\sorder(h)$, and
satisfy
\begin{equation}\label{T2.1f}
L^v \Lyap(x) \;\le\; c_{0}-h(x)
\end{equation}
for some constant $c_{0}$.
It follows by \eqref{T2.1f} that $\eta_{t}\in G$ for all $t\ge0$,
where $G$ is a compact subset of $\Pm(\Rd)$.
By \eqref{A2.1a} we have
\begin{equation*}
\sup_{\nu\in G}\; r_{\nu}(\cdot,v(\cdot))\;\in\;\sorder(h)\,.
\end{equation*}
Repeating the argument used in Case~1, we obtain \eqref{T2.1d}.
Also \eqref{T2.1e} follows as in Case~1 by using the
near-monotone property of $r_{\mu}$.

\noindent\emph{Case~3.}
We consider (C2).
 To show \eqref{T2.1a} in this case, it is enough to show that
$F(x,\eta_t)\to F(x, \mu)$ as $t\to\infty$ uniformly in $x$ on
compact subsets of $\Rd$.
Since
$F$ is continuous in $\Rd\times\calP_p(\Rd)$, we need to show that
$\eD_{p}(\eta_t, \mu)\to 0$ as $t\to\infty$.
Since
$\int_{\Rd} \mathring{r}(x,v(x))\mu(\D{x})\le\Tilde\varrho_\mu$,
it follows that for any continuous $\phi$ with $\phi\in\order(\mathring{r})$ we have
$\int \phi \, \D{\mu}<\infty$.
Then by \cite[Proposition~2.6]{ichihara-sheu} we have
$\int \phi \, \D{\eta_t}\to \int\phi\, \D{\mu}$ as $t\to \infty$ for every 
initial condition $x$.
Combining this fact with Proposition~\ref{P-villani} we obtain that
$\eD_{p}(\eta_t, \mu)\to 0$ as $t\to\infty$.
This shows \eqref{T2.1a}. It is also easy to see that
$\varrho_\eta=\Tilde{\varrho}_\mu$.
\end{proof}

\begin{remark}
We note that for models satisfying
(C3) we can strengthen the assumption on $r$ depending on the growth rate of $h$.
For example, if $h\sim \abs{x}^p$ for $p\ge 1$, then
one may a consider continuous $r$ defined on 
$\Rd\times\Act\times\calP_p(\Rd)$
that is locally Lipschitz in first and third
arguments uniformly in $u\in\Act$, and with the property that
$\sup_{\mu\in\calK}\,r_{\mu}\in\sorder(h)$ for any compact
$\calK\subset\calP_p(\Rd)$.
The results of Theorem~\ref{T2.1} continue to hold in this case.
\end{remark}

\section{Long Time Behavior and the Relative Value Iteration}\label{S-RVI}

In this section we study the long time behavior
of the finite horizon mean field game equations.
The problem is as follows.
We are given a running cost function $r(x,u,\mu)$,
a horizon $T>0$, a `terminal cost function' $\varphi_{0}\in\Cc^{2}(\Rd)$,
and an initial distribution $\eta\in\Pm(\Rd)$.
For $U\in\Uadm$ and $\{\mu_{t}\in\Pm(\Rd)\,,\; t\in[0,T]\}$ we  define
\begin{equation*}
\mathcal{J}(U,\mu;\eta)\;\df\;
\Exp^{U}_\eta
\biggl[\int_{0}^{T} r(X_{t},U_{t},\mu_{t})\,\D{t} + V_{T}(X_{T})\biggr]\,,
\end{equation*}
where $X_{t}$ is governed by \eqref{E-sde} with $\Law(X_{0})=\eta$.
Let $\Law^{U}_{\eta}(X_{t})$ denote the law of the process $X_{t}$ governed
by \eqref{E-sde} under a control $U$ with $\Law(X_{0})=\eta$.
Then $\{\mu^{*}_{t}\in\Pm(\Rd)\,,\;t\in[0,T]\}$ is called an MFG solution
for the problem
on $[0,T]$ if there exists an admissible control $U^{*}$ such that
$\Law^{U^{*}}_{\eta}(X_{t}) = \mu^{*}_{t}$ for all $t\in[0,T]$
\begin{equation*}
\mathcal{J}(U,\mu^{*};\eta)\;\ge\; \mathcal{J}(U^{*},\mu^{*};\eta)
\qquad\forall\,U\in\Uadm\,.
\end{equation*}

We assume that
$r(x,u,\mu)$ has the separable form $\mathring{r}(x,u)+F(x,\mu)$,
so that the Hamiltonian $H(x,p)$ is given by
\begin{equation}\label{E-H}
H(x,p) = \min_{u}\; \left\{b(x,u)\cdot p + \mathring{r}(x,u)\right\}\,.
\end{equation}
Denoting by $\chi(t,\cdot\,)$ the density of $\mu_{T-t}=\Law(X_{T-t})$,
the dynamic programming formulation amounts to solving
\begin{subequations}
\begin{align}
\begin{split}
\partial_{t} V &\;=\;
 a^{ij}\partial_{ij} V + H(x,\nabla V) + F(x,\mu_{T-t})\,,\\[3pt]
&\mspace{50mu}\quad V(0,x) = \varphi_{0}(x)\quad \forall\,x\in\Rd\,,
\end{split}\label{HJB1a}\\[6pt]
\begin{split}
-\partial_{t} \chi &\;=\;
\partial_{i}\bigl(a^{ij}\partial_{j}\chi+ (\partial_{j}a^{ij})\chi\bigr)
-\text{div} \left(\tfrac{\partial H}{\partial p} (x,\nabla V) \chi\right)\,,\\
&\mspace{50mu}\chi(T,\cdot\,)~\text{is the density of}~\eta\,.
\end{split}
\label{HJB1b}
\end{align}
\end{subequations}
Equation \eqref{HJB1b} is the Kolmogorov equation for the density $\chi(t,\cdot\,)$,
running in backward time.
Therefore, if $(V,\chi)$ is a solution of \eqref{HJB1a}--\eqref{HJB1b},
then
$\mu^{*}_{t}(\D{x}) = \chi(T-t,x)\,\D{x}$
for $t\in[0,T]$ is a MFG solution in the sense of the above definition.
It also follows by the dynamic programming principle that the solution
$V_T(t,x)$, where the $T$ in the subscript denotes the dependence
of the solution on the horizon $[0,T]$,
has the stochastic representation
\begin{equation*}
V_{T}(t,x)\;=\;\inf_{U\in\Uadm}\;\Exp^{U}_{x} \left[\int_{0}^{T-t}
r\bigl(X_{s},U_{s},\mu^{*}_{T-t+s}\bigr)\,\D{t}+h(X_{T-t})\right]
\qquad\forall\,(t,x)\in[0,T]\times\Rd\,,
\end{equation*}
where the process $X$ is
governed by \eqref{E-sde}.

Lasry and Lions have examined thoroughly the case where $b(x,u)=-u$,
$\upsigma$ is the identity matrix,
$r(x,u) = \nicefrac{1}{2} \abs{u}^{2}$, and $F(x,\mu)$ takes
the form $F(x,\chi(x))$, where $\chi$ is the density of $\mu$.
Moreover, they assume that $F(x,t)\in\Rd\times\RR$ is $\Cc^{1}$,
is strictly increasing in $t$, and is
$\ZZ^{d}$-periodic in $x$.
As a result the state space is a $d$-dimensional torus $\mathbb{T}$.
Under the assumption
that the density $\eta$ is H\"older continuous and has
finite second moments, they have shown the existence and uniqueness
of a solution
to \eqref{HJB1a}--\eqref{HJB1b} for this problem \cite{lasry-lions}.
They have also proved the existence and uniqueness of a stationary solution,
i.e., the corresponding equation for the ergodic problem.
The behavior over a long horizon for this model has been studied,
with both local and non-local interactions in \cite{CLLP-13,Cardaliaguet12}.
In the case of non-local interactions,
they establish convergence in the average sense, i.e.,
$\lim_{T\to\infty}\,\frac{1}{T} V(\gamma T)= (1-\gamma) \Bar{\varrho}$,
for $\gamma\in(0,1)$, where $\Bar{\varrho}$ is the value of the associated
ergodic problem (see \eqref{T4.2a} below), and also convergence
in $L^{2}(\mathbb{T})$ uniformly over compact intervals of time.
Also, they show that the density $\chi_{T}$ converges to the density
of the stationary solution in $L^{2}(\mathbb{T})$
uniformly over compact intervals of time.
Under stronger assumptions on $F$ they show that convergence is exponential
in $T$.

For the problem on $\Rd$ we are dealing with, in order to avoid restrictive
assumptions on $F$, we have to compensate for the non-compactness
of the state space by imposing a uniform stability hypothesis on the
dynamics in \eqref{E-sde}.
We describe these assumptions in the next section.

\subsection{Assumptions and basic properties}
Existence and uniqueness of solutions to \eqref{HJB1a}--\eqref{HJB1b}
under general vector fields requires strong regularity of the data.
We refer the reader to \cite{Kolokoltsov-11a,Kolokoltsov-10B,Huang-06}.
We note here that the results in this paper can be extended to include
a drift $b$, a diffusion matrix $\sigma$ and running cost $r$ that
all depend on $\mu$, albeit necessitating various assumptions on the
smoothness of the data.

In this paper we are not interested in the regularity of the
Fokker--Planck equation \eqref{HJB1b}.
If a MFG solution $\mu_{t}$ is provided, then $F(x,\mu_{t})$ is a given
function of $(t,x)\in[0,T]\times\Rd$, and the Hamiltonian does
not depend on $\mu_{t}$.
If $F(x,\mu_{t})$ is
H\"older in $x$ and continuous in $t$, and $\varphi_{0}$ is smooth enough,
then \eqref{HJB1b} has almost classical solutions.
Therefore we concentrate on a set of assumptions that
guarantee the existence and uniqueness of a MFG solution, and at
the same time maintain sufficient regularity for the solutions
of \eqref{HJB1b}.

For $\eta\in\Pm(\Rd)$ we let
$\sM_{\eta}([0,T])$ denote the set of all trajectories
$\bigl\{\mu_{t}=\Law^{U}_{\eta}(X_{t})\,,\;U\in\Uadm\,,t\in[0,T]\bigr\}$,
and define $\mathscr{P}(\eta)
\df\{\Law^{U}_{\eta}(X_{t})\in\Pm(\Rd)\,\colon\,U\in\Uadm\,,\;t\ge0\}$.

\begin{assumption}\label{A4.1}
The following hold:
\begin{itemize}
\item[(i)]
Assumptions (A1)--(A3) on the data hold,
and $\mathring r:\Rd\times\Act\to\R_+$ is continuous and locally Lipschitz
in $x$ uniformly in $u\in\Act$.
\item[(ii)]
The function $F$ is defined on $\Rd\times\Tilde\Pm$, where $\Tilde\Pm$
is some subset of $\Pm(\Rd)$ which contains $\mathscr{P}(\eta)$,
and satisfies
\begin{equation}\label{E-mon}
\mathscr{F}(\mu,\mu')\;\df\;\int_{\Rd} \bigl(F(x,\mu) - F(x,\mu')\bigr)
\bigl(\mu(\D{x})-\mu'(\D{x})\bigr)\;\ge\;0\qquad\forall \mu,\mu'\in\Tilde\Pm\,.
\end{equation}
Moreover, $F(x,\mu)$ is locally Lipschitz
in $x$ uniformly on compact subsets of $\Tilde\Pm$,
and $\mu\mapsto F(\cdot\,,\mu)$ is a continuous map from $\Tilde\Pm$ to
$\Cc(\Rd)$ under the topology of uniform convergence on compact sets.
\item[(iii)]
The terminal cost $\varphi_{0}$ is in $\Cc^{2}(\Rd)$
and the density  of the initial distribution $\eta$ is H\"older continuous, and has
a finite second moment.
\item[(iv)]
There exists a unique $u^{*}$ that minimizes the Hamiltonian in \eqref{E-H}.
\end{itemize}
\end{assumption}

Under Assumption~\ref{A4.1}, existence of an MFG solution is
asserted in \cite[Theorem~2.1]{lacker}.
The (non-strict) monotonicity hypothesis \eqref{E-mon} together
with the fact that $\calA^{*}(\mu)$ is a singleton implied
by Assumption~\ref{A4.1}\,(iv), is enough
to guarantee uniqueness of the MFG solution for the ergodic problem.
The monotonicity hypothesis has become a standard assumption in the literature
\cite{lasry-lions,CLLP-13}.

Recall the definition of the weighted Banach space $\Cc_{\Lyap}(\RR^{d})$ from
Section~\ref{S-notation}.
The following assumption
is a strengthening of the stability hypothesis in (C3).

\begin{assumption}\label{A4.2}
A number $p\ge1$ is specified as a parameter.
There exists a nonnegative, inf-compact $\Lyap\in\Cc^{2}(\Rd)$,
and positive constants $c_{0}$ and $c_{1}$ satisfying
\begin{equation}\label{E-Lyap}
\Lg^{u} \Lyap(x) \,\le\,  c_{0} - c_{1} \Lyap(x) \qquad
\forall\, (x,u)\in\Rd\times\Act\,.
\end{equation}
Without loss of generality we assume $\Lyap\ge1$.
Also
\begin{itemize}
\item[(i)] It holds that
\begin{equation*}
\frac{\Lyap(x)}{1+\abs{x}^p}
\;\xrightarrow[\abs{x}\to\infty]{}\;\infty\,,\quad\text{and}\quad
\limsup_{\abs{x}\to\infty}\;
\frac{\sup_{u\in\Act}\;\mathring{r}(x\,,u)}{1+\abs{x}^p}\;<\;\infty\,.
\end{equation*}
\item[(ii)]
For any compact
$\cK\subset\calP_{p}(\RR^{d})$ and $R>0$,
there exists a constant $M_{p}(R)>0$ such that
such that
\begin{equation*}
\babs{F(x,\mu)-F(x',\mu)}\;\le\;M_{p}(R)
\abs{x-x'}\qquad\forall\, x,x'\in B_{R}\,,~\forall\,\mu\in\cK\,.
\end{equation*}
\item[(iii)]
The map $\mu\to F(\cdot\,,\mu)$ from $\calP_{p}(\RR^{d})$
to $\Cc_{\Lyap}(\Rd)$ is continuous.
\end{itemize}
\end{assumption}

\medskip
It is well known (see \cite{ari-bor-ghosh,GihSko72})
that \eqref{E-Lyap} implies that
\begin{equation}\label{EA4.2b}
\Exp^{U}_{x} \left[\Lyap(X_{t}) \right] \le
\frac{c_{0}}{c_{1}} + \Lyap(x) \E^{-c_{1} t}\qquad \forall x\in\RR^{d}\,,~
\forall\, U\in\Uadm\,.
\end{equation}
It follows by \eqref{EA4.2b} that
all stationary Markov controls $\Usm$ are stable and that
\begin{equation*}
\int_{\RR^{d}} \Lyap(x)\,\imeas_{v}(\D{x})\;\le\;
\frac{c_{0}}{c_{1}}\,,
\end{equation*}
where, as usual, $\imeas_{v}$ denotes the unique invariant probability measure
of the diffusion controlled under $v$.
Therefore, $\mu_{v}\in\Pm_{p}(\Rd)$ for all $v\in\Usm$.

It also follows that for any $v\in\Usm$ the controlled process
under $v$ is $\Lyap$-geometrically ergodic
(see \cite{Down-95,Fort-05}), or in other words, that
there exist constants $M_{0}$ and $\gamma>0$ such that,
if $h\,\colon\,\Rd\to\RR$ is Borel measurable and $h\in\order(\Lyap)$, then
\begin{equation}\label{E-geom}
\left|\,\Exp_{x}^{v}\bigl[h(X_{t})\bigr]
- \int_{\RR^{d}}h(x)\,\imeas_{v}(\D{x})\right|
\;\le\; M_{0} \E^{-\gamma t}
\bnorm{h}_{\Lyap} \bigl(1+\Lyap(x)\bigr)
\end{equation}
for all $t\ge0$ and $x\in\RR^{d}$.

We have the following simple assertion.

\begin{lemma}\label{L4.1}
Under Assumption~\ref{A4.2},
here exists a constant $\Tilde{M}_{p}$ which depends only on $p\ge1$
and $\Law(X_{0})\in\Pm_{p}(\Rd)$ such that
\begin{equation*}
\eD_{p}\bigl(\Law(X_{t}),\Law(X_{s})\bigr) \;\le\;
\Tilde{M}_{p}\,\sqrt{\abs{t-s}}\qquad
\forall\, s,t\in\RR_{+}\,,~\abs{s-t}<1\,,\qquad\text{under any}~ U\in\Uadm\,.
\end{equation*}
\end{lemma}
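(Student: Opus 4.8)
The plan is to bound the Wasserstein distance by an $L^{p}$-increment of the process itself and then to control that increment using the affine growth condition (A2) together with the uniform-in-time moment bounds that the Lyapunov inequality \eqref{E-Lyap} supplies. Concretely, for fixed $s<t$ the joint law $\Law(X_{s},X_{t})$ is a coupling of $\Law(X_{s})$ and $\Law(X_{t})$, so by the definition of $\eD_{p}$ in \eqref{EDp},
\[
\eD_{p}\bigl(\Law(X_{t}),\Law(X_{s})\bigr)^{p}\;\le\;\Exp\bigl[\abs{X_{t}-X_{s}}^{p}\bigr]\,.
\]
Hence it suffices to produce a constant $\Tilde{M}_{p}$, depending only on $p$ and $\Law(X_{0})$, such that $\Exp[\abs{X_{t}-X_{s}}^{p}]\le\Tilde{M}_{p}^{\,p}\,\abs{t-s}^{p/2}$ whenever $\abs{t-s}<1$; the moment bound established next also shows $\Law(X_{t})\in\Pm_{p}(\Rd)$ for all $t$, so the left-hand side above is well defined.

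\emph{Uniform moments.} I would next prove $\sup_{t\ge0}\Exp[\abs{X_{t}}^{p}]<\infty$. By Assumption~\ref{A4.2}\,(i) the ratio $(1+\abs{x}^{p})/\Lyap(x)$ is bounded on $\Rd$ --- it vanishes as $\abs{x}\to\infty$ and $\Lyap\ge1$ is continuous --- so $1+\abs{x}^{p}\le\kappa_{0}\Lyap(x)$ for a constant $\kappa_{0}$. Integrating \eqref{EA4.2b} against $\Law(X_{0})$ gives $\Exp[\Lyap(X_{t})]\le c_{0}/c_{1}+\Exp[\Lyap(X_{0})]$ for every $t\ge0$, whence $\Exp[\abs{X_{t}}^{p}]\le\kappa_{0}\bigl(c_{0}/c_{1}+\Exp[\Lyap(X_{0})]\bigr)=:\Tilde{K}$ uniformly in $t$. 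Moreover, the classical moment estimate for \eqref{E-sde} over a unit time interval --- which uses only (A1)--(A2) --- yields a constant $\kappa_{p}$ with $\Exp\bigl[\sup_{s\le r\le s+1}\abs{X_{r}}^{p}\bigm|\sF_{s}\bigr]\le\kappa_{p}\bigl(1+\abs{X_{s}}^{p}\bigr)$ for every $s\ge0$, and therefore
\[
\sup_{s\ge0}\;\Exp\Bigl[\sup_{s\le r\le s+1}\abs{X_{r}}^{p}\Bigr]\;\le\;\kappa_{p}\bigl(1+\Tilde{K}\bigr)\;=:\;\Tilde{K}_{1}\;<\;\infty\,.
\]

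\emph{The increment.} Fix $s<t$ with $t-s<1$ and write $X_{t}-X_{s}=\int_{s}^{t}b(X_{r},U_{r})\,\D r+\int_{s}^{t}\upsigma(X_{r})\,\D W_{r}$. Since $\abs{b(x,u)}\le C_{1}^{1/2}(1+\abs{x})$ by (A2), the drift term is bounded pathwise by $\kappa_{1}(t-s)^{p}\bigl(1+\sup_{s\le r\le t}\abs{X_{r}}^{p}\bigr)$. Since $\norm{\upsigma(x)}^{2}\le C_{1}(1+\abs{x}^{2})$, the Burkholder--Davis--Gundy inequality (valid for all $p\ge1$) gives
\[
\Exp\Babs{\int_{s}^{t}\upsigma(X_{r})\,\D W_{r}}^{p}\;\le\;\kappa_{2}\,\Exp\Bigl[\Bigl(\int_{s}^{t}\norm{\upsigma(X_{r})}^{2}\,\D r\Bigr)^{p/2}\Bigr]\;\le\;\kappa_{3}\,(t-s)^{p/2}\Bigl(1+\Exp\bigl[\sup_{s\le r\le t}\abs{X_{r}}^{p}\bigr]\Bigr)\,.
\]
Adding the two bounds through $\abs{a+b}^{p}\le2^{p-1}(\abs{a}^{p}+\abs{b}^{p})$, using $t-s<1$ so that $(t-s)^{p}\le(t-s)^{p/2}$, and invoking $\Exp[\sup_{s\le r\le t}\abs{X_{r}}^{p}]\le\Tilde{K}_{1}$ from the previous step, we obtain $\Exp[\abs{X_{t}-X_{s}}^{p}]\le\Tilde{M}_{p}^{\,p}(t-s)^{p/2}$ for a constant $\Tilde{M}_{p}$ determined by $p$, $C_{1}$ and $\Tilde{K}_{1}$, hence by $p$ and $\Law(X_{0})$. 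Together with the reduction step this proves the claim.

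\emph{Main obstacle.} The one point that goes beyond routine It\^o calculus is the uniform-in-$t$ moment estimate: affine growth alone would only give constants growing exponentially in $t$, and it is precisely the geometric stability encoded in Assumption~\ref{A4.2} through \eqref{E-Lyap}--\eqref{EA4.2b} that renders this bound --- and hence $\Tilde{M}_{p}$ --- independent of $t$. Everything else follows from the elementary splitting above and the restriction $\abs{t-s}<1$.
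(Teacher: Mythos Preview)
Your proof is correct and follows essentially the same route as the paper: couple $\Law(X_s)$ and $\Law(X_t)$ by the joint law, split the increment into drift and martingale parts, apply Burkholder--Davis--Gundy, and use the uniform-in-time moment bound supplied by \eqref{E-Lyap}--\eqref{EA4.2b}. The paper's version is marginally more direct---it bounds $\Exp\bigl[\int_s^t\abs{b(X_r,U_r)}^{p}\,\D r\bigr]$ and the BDG term straight from $\abs{b}^{p},\norm{\upsigma}^{p}\in\order(\Lyap)$ and \eqref{EA4.2b}, so it never needs your intermediate step controlling $\Exp\bigl[\sup_{s\le r\le s+1}\abs{X_r}^{p}\bigr]$---but the substance is the same.
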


\begin{proof}
By the Burkholder--Davis--Gundy inequality, for some constant
$\kappa_{p}>0$, we obtain
\begin{equation*}
\Exp\,\biggl[\sup_{s\le r\le t}\; \abs{X_{r}-X_{s}}^{p}\biggr]
\;\le\; 2^{p-1} (t-s)^{p-1}
\Exp\,\biggl[\int_{s}^{t}\abs{b(X_{r},U_{r})}^{p}\,\D{r}\biggr]
+ 2^{p-1}
\kappa_{p} \Exp\,\biggl[\int_{s}^{t}\norm{\upsigma(X_{r})}^{p}\,\D{r}\biggr]
^{\nicefrac{p}{2}}\,.
\end{equation*}
Since $\sup_{u\in\Act}\,\abs{b(\cdot,u)}^{p}\in\order(\Lyap)$ and
$\norm{\upsigma(\cdot\,)}^{p}\in\order(\Lyap)$ by (A2) and Assumption~\ref{A4.2}\,(i),
the result follows from the inequality above and \eqref{EA4.2b}.
\end{proof}

Let
$$\mathscr{M}_{0} \df \left\{\mu\in\Pm(\RR^{d}) :
\int_{\RR^{d}} \Lyap(x)\,\mu(\D{x})\le
\frac{c_{0}}{c_{1}}\right\}\,.$$
The set $\mathscr{M}_{0}$ is compact in $\Pm_{p}(\Rd)$
and $\eH\subset\mathscr{M}_{0}$ by Assumption~\ref{A4.2}.
We have not assumed that $F$ is nonnegative. Nevertheless, Assumption~\ref{A4.2}
implies that $\inf_{\mu,\mu'\in\eH}\, \int F(x,\mu)\,\mu'(\D{x})<-\infty$, 
and therefore, the
ergodic cost problem is well posed.
Combining the preceding discussion with the results in Section~\ref{S-existence}, we
have the following.

\begin{theorem}
Let Assumptions~\ref{A4.1}--\ref{A4.2} hold.
Then there exists a unique
MFG solution $\Bar{\mu}\in\eH$ to the ergodic control problem. 
Associated with that, we obtain a unique $\Bar{V}\in\Cc^{2}(\Rd)\cap\Cc_{\Lyap}(\Rd)$,
satisfying $\Bar{V}\in\sorder(\Lyap)$ and $\Bar{V}(0)=0$, which solves
\begin{equation}\label{T4.2a}
a^{ij}(x)\partial_{ij} \Bar{V}(x) + 
H(x,\nabla\Bar{V}) + F(x,\Bar\mu)=\Bar{\varrho}
\end{equation}
with $\Bar{\varrho}=\varrho_{\Bar\mu}$.
\end{theorem}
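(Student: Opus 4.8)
The plan is to read off existence from the results of Section~\ref{S-convex} and then to establish uniqueness by a Lasry--Lions type argument built on the monotonicity \eqref{E-mon} together with the uniqueness of the Hamiltonian minimizer postulated in Assumption~\ref{A4.1}\,(iv). For existence, I would first note that \eqref{E-Lyap} is a strengthening of (C3a) (taking $h$ proportional to $\Lyap$): every stationary Markov control is then stable, $\eH\subset\sM_{0}$, and $\sM_{0}$ is compact in $\calP_{p}(\Rd)$. Together with $\sup_{u\in\Act}\mathring r(\cdot,u)\in\order(1+\abs{x}^{p})\subset\sorder(\Lyap)$ (Assumption~\ref{A4.2}\,(i)) and the continuity of $\mu\mapsto F(\cdot,\mu)$ from $\calP_{p}(\Rd)$ into $\Cc_{\Lyap}(\Rd)$ (Assumption~\ref{A4.2}\,(iii), so that $\sup_{\mu\in\sM_{0}}\norm{F(\cdot,\mu)}_{\Lyap}<\infty$), the ergodic control problem for the running cost $r_{\mu}=\mathring r+F(\cdot,\mu)$ fits the hypotheses of Section~\ref{S-convex}, its well-posedness ($\Tilde\varrho_{\mu}$ finite for $\mu\in\eH$) being the content of the discussion preceding the theorem. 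Theorem~\ref{T3.2} then furnishes $\bar\mu\in\eH$ and $\bar V=V_{\bar\mu}\in\Cc^{2}(\Rd)\cap\sorder(\Lyap)$, normalized so that $\bar V(0)=0$, which solve \eqref{Ehjb3} with $\bar\varrho=\Tilde\varrho_{\bar\mu}=\varrho_{\bar\mu}$. By Assumption~\ref{A4.1}\,(iv), for each $(x,p)$ the map $u\mapsto b(x,u)\cdot p+\mathring r(x,u)$ has a unique minimizer over $\Act$, say $u^{*}(x,p)$, and $u^{*}$ is continuous by Berge's maximum theorem; hence every optimal relaxed control for \eqref{Ehjb3} must coincide with the strict control $v^{*}(x)\df u^{*}(x,\nabla\bar V(x))\in\Usm$, so $\bar\mu=\mu_{v^{*}}$, and substituting this minimizer turns \eqref{Ehjb3} into exactly \eqref{T4.2a}. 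Finally $\sorder(\Lyap)\subset\order(\Lyap)$ and $\bar V\in\Cc^{2}(\Rd)$ give $\bar V\in\Cc_{\Lyap}(\Rd)$.

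For uniqueness, suppose $(\bar V_{1},\bar\mu_{1},\bar\varrho_{1})$ and $(\bar V_{2},\bar\mu_{2},\bar\varrho_{2})$ are two MFG solutions, with $v^{*}_{i}(x)=u^{*}(x,\nabla\bar V_{i}(x))$ the optimal control associated with $\bar V_{i}$ and $\bar\mu_{i}=\mu_{v^{*}_{i}}$. I would subtract the two copies of \eqref{T4.2a} and integrate the difference against $\bar\mu_{1}$ and against $\bar\mu_{2}$, in each case eliminating the second-order term via the stationarity identity $\int_{\Rd}L^{v^{*}_{i}}f\,\bar\mu_{i}(\D x)=0$ applied to $f=\bar V_{1}-\bar V_{2}$, together with the envelope relation $b(x,v^{*}_{i}(x))=\partial_{p}H(x,\nabla\bar V_{i}(x))$, which is valid since $H(x,\cdot)$ is concave and, by Assumption~\ref{A4.1}\,(iv), continuously differentiable in $p$. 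Using the supergradient inequality for $H(x,\cdot)$ at $\nabla\bar V_{1}(x)$ and at $\nabla\bar V_{2}(x)$ respectively, this yields
\[
\int_{\Rd}\bigl(F(x,\bar\mu_{1})-F(x,\bar\mu_{2})\bigr)\bar\mu_{1}(\D x)\;\le\;\bar\varrho_{1}-\bar\varrho_{2}\;\le\;\int_{\Rd}\bigl(F(x,\bar\mu_{1})-F(x,\bar\mu_{2})\bigr)\bar\mu_{2}(\D x),
\]
whence $\mathscr{F}(\bar\mu_{1},\bar\mu_{2})\le0$; by \eqref{E-mon} this forces $\mathscr{F}(\bar\mu_{1},\bar\mu_{2})=0$ and equality everywhere above. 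Equality of the outer expressions with $\bar\varrho_{1}-\bar\varrho_{2}$ makes the two supergradient inequalities tight, $\bar\mu_{1}$-a.e.\ and $\bar\mu_{2}$-a.e.\ respectively, so that $u^{*}(x,\nabla\bar V_{1}(x))$ is also a minimizer of $u\mapsto b(x,u)\cdot\nabla\bar V_{2}(x)+\mathring r(x,u)$; uniqueness of that minimizer gives $v^{*}_{1}=v^{*}_{2}$ both $\bar\mu_{1}$-a.e.\ and $\bar\mu_{2}$-a.e. Consequently $\int_{\Rd}L^{v^{*}_{2}}f\,\bar\mu_{1}(\D x)=\int_{\Rd}L^{v^{*}_{1}}f\,\bar\mu_{1}(\D x)=0$ for every $f\in\Cc^{2}_{c}(\Rd)$, i.e.\ $\bar\mu_{1}$ is an invariant probability measure for the control $v^{*}_{2}$; since $v^{*}_{2}$ is stable by \eqref{E-Lyap}, its invariant measure is unique, so $\bar\mu_{1}=\bar\mu_{2}$. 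Then $F(\cdot,\bar\mu_{1})=F(\cdot,\bar\mu_{2})$, so $\bar V_{1}$ and $\bar V_{2}$ both solve \eqref{Ehjb1} with $\mu=\bar\mu_{1}$, and the uniqueness assertion of Theorem~\ref{T3.1} forces $\bar\varrho_{1}=\bar\varrho_{2}$ and $\bar V_{1}=\bar V_{2}$.

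The main obstacle will be the justification of the stationarity identity $\int_{\Rd}L^{v^{*}_{i}}f\,\bar\mu_{i}(\D x)=0$ for the non-compactly-supported test function $f=\bar V_{1}-\bar V_{2}$ (equivalently, for each $f=\bar V_{i}$): this requires $a^{ij}\partial_{ij}\bar V_{i}$ and $\abs{\nabla\bar V_{i}}$ to be $\bar\mu_{i}$-integrable and a vanishing-flux-at-infinity bound, which I would derive from interior elliptic estimates for \eqref{T4.2a}, the a priori bounds $\bar V_{i}\in\sorder(\Lyap)$ and $\sup_{u\in\Act}\abs{b(\cdot,u)}\in\order(\Lyap)$, and $\int_{\Rd}\Lyap\,\D\bar\mu_{i}\le c_{0}/c_{1}$, by a cut-off argument along the lines of the ergodic identities in \cite{ari-bor-ghosh}. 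A secondary technical point is the joint continuity of $(x,p)\mapsto u^{*}(x,p)$ and the differentiability of $H$ in $p$ underlying the envelope relation, both of which follow from Assumption~\ref{A4.1}\,(iv) via Berge's and Danskin's theorems.
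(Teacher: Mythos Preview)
Your proposal is correct and aligns with the paper's approach. The paper gives no explicit proof: it simply states that the theorem follows by ``combining the preceding discussion with the results in Section~\ref{S-existence}'', having already remarked (just after Assumption~\ref{A4.1}) that ``the (non-strict) monotonicity hypothesis \eqref{E-mon} together with the fact that $\calA^{*}(\mu)$ is a singleton implied by Assumption~\ref{A4.1}\,(iv), is enough to guarantee uniqueness of the MFG solution for the ergodic problem.'' Your write-up supplies exactly those details, and your identification of the one delicate point (extending the invariance identity to the non-compactly supported test function $\bar V_{1}-\bar V_{2}$) is apt.

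One simplification worth noting: the technical obstacle you flag can be avoided entirely by arguing through the ergodic occupation measure characterization rather than integrating the PDE. Since $\bar\varrho_{i}=\min_{\uppi\in\eG}\uppi(r_{\bar\mu_{i}})=\uppi_{v_{i}^{*}}(r_{\bar\mu_{i}})$ with $\uppi_{v_{i}^{*}}=\bar\mu_{i}\cast v_{i}^{*}\in\eG$, suboptimality of $\uppi_{v_{1}^{*}}$ for the cost $r_{\bar\mu_{2}}$ gives
\[
\bar\varrho_{2}\;\le\;\uppi_{v_{1}^{*}}(r_{\bar\mu_{2}})
\;=\;\int\mathring r\bigl(x,v_{1}^{*}(x)\bigr)\,\bar\mu_{1}(\D x)
+\int F(x,\bar\mu_{2})\,\bar\mu_{1}(\D x),
\]
while $\bar\varrho_{1}=\int\mathring r(x,v_{1}^{*}(x))\,\bar\mu_{1}(\D x)+\int F(x,\bar\mu_{1})\,\bar\mu_{1}(\D x)$; subtracting yields $\bar\varrho_{2}-\bar\varrho_{1}\le\int(F(x,\bar\mu_{2})-F(x,\bar\mu_{1}))\,\bar\mu_{1}(\D x)$, and the symmetric inequality follows by exchanging indices. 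This reproduces your chain of inequalities without any integration by parts or cut-off. Equality then says $\uppi_{v_{1}^{*}}\in\calA(\bar\mu_{2})$, hence $\bar\mu_{1}\in\calA^{*}(\bar\mu_{2})$; since $\calA^{*}(\bar\mu_{2})=\{\bar\mu_{2}\}$ by Assumption~\ref{A4.1}\,(iv), one gets $\bar\mu_{1}=\bar\mu_{2}$ directly. This is more in the convex-analytic spirit of Section~\ref{S-convex} and requires only $\int\Lyap\,\D\bar\mu_{i}<\infty$, which you already have.
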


For the rest of this section
we let $\Bar{v}$ denote some Markov control associated with
the stationary solution in \eqref{T4.2a}, i.e.,
a measurable selector from the minimizer
of the Hamiltonian $H(x,\nabla\Bar{V})$.
By uniqueness of the solutions we have $\mu_{\Bar{v}}=\Bar{\mu}$.

\subsection{The relative value iteration}
Note that the Markov control associated with
\eqref{HJB1a} is computed `backward' in time.
We need the following definition.

\begin{definition}
Let $\Hat{v}=\{\Hat{v}_{t}\,,\ t\in[0,T]\}$ denote a measurable selector
from the minimizer of the Hamiltonian in \eqref{HJB1a}.
For each $T>0$ we define the (nonstationary) Markov control
\begin{equation*}
\Hat{v}^{T}\df\bigl\{\Hat{v}^{T}_{s}\;=\;\Hat{v}_{T-s}\,,\; s\in[0,T]\bigr\}\,.
\end{equation*}
We also let 
$\Hat{\eta}^{T}_{s}$ denote the law of $X_{s}$, $s\in[0,T]$ under
the control $\Hat{v}^{T}$.
As remarked earlier $\Hat{\eta}^{T}_{s}(\cdot)=\Law^{\Hat{v}}_{\eta}(X_{s})$
for $s\in[0,T]$, and thus, $\Hat{\eta}^{T}_{0}$ agrees with the initial
law $\eta$, which we also denote by $\Hat{\eta}_{0}$.
\end{definition}

We modify \eqref{HJB1a} by normalizing it as follows:
\begin{equation}\label{VI}
\partial_{t}\varphi(t,x) \;=\; a^{ij}(x)\partial_{ij} \varphi(t,x) + 
H(x,\nabla\varphi(t,x)) + F(x,\Hat{\eta}^{T}_{T-t})-\Bar{\varrho}\,,
\quad \varphi(0,x) =\varphi_{0}(x)\,,
\end{equation}
where $\varphi_{0}\in\Cc^{2}(\Rd)\cap\sorder(\Lyap)$ denotes
the terminal cost.
It is evident the solution $\varphi$ depends also on the horizon $[0,T]$
and to distinguish among these solutions we adopt the notation
$\varphi^T(t,x)$, or
$\varphi^T_{t}(x)$.

For existence and uniqueness of solutions to \eqref{VI} in cylinders
we refer the reader
to \cite[Theorem~6.1, p.~452]{Lady} and to p.~492 of the same reference
for the Cauchy problem.
See also \cite{RVI,RVIM} for the Cauchy problem
in \eqref{VI} as well as \eqref{RVI} below.
We need to mention though that Theorem~6.1 in \cite{Lady} concerns
solutions in H\"older spaces, and in order to satisfy
the assumptions of this theorem $t\mapsto F(x,\Hat{\eta}^{T}_{T-t})$
has to be H\"older continuous.
However under our assumptions it is only continuous, which means that
the time derivative of the solution $\varphi(t,x)$ is not necessarily H\"older
continuous.
In general then, \eqref{VI} has to be solved in the parabolic Sobolev space
$\Sobl^{1,2,q}((0,\infty)\times\Rd)$ (see \cite[Section IV.9]{Lady}).
We don't require more regularity than that in this paper.

We are concerned here only with the solution $\varphi^T$ which
agrees with the stochastic representation
\begin{align}\label{E-VISR}
\varphi^T_{t}(x) &\;=\; \inf_{U}\;\Exp^{U}_{x}
\left[\int_{0}^{t}r_{\Hat{\eta}^{T}_{T-t+s}}(X_{s},U_{s})\,\D{s}
+\varphi_{0}(X_{t})\right] -\Bar{\varrho}\,t\nonumber\\[5pt]
&\;=\; \Exp^{\Hat{v}^{T}}_{x}
\left[\int_{0}^{t}r_{\Hat{\eta}^{T}_{T-t+s}}
\bigl(X_{s},\Hat{v}^{T}_{T-t+s}(X_{s})\bigr)\,\D{s}
+\varphi_{0}(X_{t})\right] -\Bar{\varrho}\,t
\qquad\forall\,t\in[0,T]\,,
\end{align}
and in general, for any $[t_{1},t_{2}]\in[0,T]$,
\begin{equation*}
\varphi^T_{t_{2}}(x) \;=\; \Exp^{\Hat{v}^{T}}_{x}
\left[\int_{0}^{t_{2}-t_{1}}r_{\Hat{\eta}^{T}_{T-t_{2}+s}}
\bigl(X_{s},\Hat{v}^{T}_{T-t_{2}+s}(X_{s})\bigr)\,\D{s}
+\varphi^T_{t_{1}}(X_{t_{2}-t_{1}})\right] -\Bar{\varrho}\,(t_{2}-t_{1})\,.
\end{equation*}
We also consider the following variation of \eqref{VI}:
\begin{equation}\label{RVI}
\partial_{t}\psi^T_{t}(x) \;=\; a^{ij}(x)\partial_{ij} \psi^T_{t}(x) + 
H(x,\nabla\psi^T_{t}(x))\\[5pt] + F(x,\Hat{\eta}^{T}_{T-t})-\psi^T_{t}(0)\,,
\quad \psi^{T}_{0}(x) = \varphi_{0}(x)\,.
\end{equation}

It is straightforward to show that $\Hat{v}_{t}$ is also
a measurable selector from the minimizer of the Hamiltonian in \eqref{RVI}, and that
$\varphi^T$ and $\psi^{T}$ are related by
\begin{equation*}
\varphi^T_{t}(x) \;=\; \psi^T_{t}(x) -\Bar{\varrho}\,t
+ \int_{0}^{t} \psi^T_s(0)\,\D{s}
\,,\qquad (t,x)\in[0,T]\times\RR^{d}\,.
\end{equation*}
We have in particular that
\begin{equation}\label{viloc}
\varphi^T_{t}(x) - \varphi^T_{t}(0) \;=\; \psi^T_{t}(x) - \psi^T_{t}(0)
\,,\qquad (t,x)\in[0,T]\times\RR^{d}\,.
\end{equation}
Conversely, if $\varphi^T$ is a solution of \eqref{VI}, then
one obtains a corresponding solution of \eqref{RVI}
that takes the form \cite[Lemma~4.4]{RVI}:
\begin{equation}\label{virvi}
\psi^T_{t}(x) \;=\; \varphi^T_{t}(x) - \int_{0}^{t}\E^{s-t}\,\varphi^T_s(0)\,\D{s} +
\Bar{\varrho}\,(1-\E^{-t})\,,\qquad (t,x)\in[0,T]\times\RR^{d}\,.
\end{equation}
We refer to \eqref{VI}, and \eqref{RVI} as the \emph{value iteration} (VI),
and \emph{relative value iteration} (RVI) equations, respectively.

The following technique is rather standard.
For $\eta\in\Pm(\Rd)$ and $v\in\Usm$ we define
$$\Bar F(\eta,\mu) \;\df\; \int F(x,\mu)\,\eta(\D{x})\,,\quad\text{and}\quad
\Bar{\mathring{r}}(\eta,v) \;\df\; \int \mathring{r}\bigl(x,v(x)\bigr)\,\eta(\D{x})$$
for $\eta\in\Pm(\Rd)$.
We consider $X$ in \eqref{E-sde}
under the following Markov controls:
$\Hat{v}_{t}$ which a measurable selector from the minimizer in \eqref{VI},
and the stationary control $\Bar{v}$ which corresponds to \eqref{T4.2a}.
Applying \eqref{E-VISR} and integrating with
respect to $\Hat{\eta}^{T}_{0}$ and $\Bar\mu$, respectively, we obtain
\begin{align}
\Hat{\eta}^{T}_{0}(\varphi^T_{T})
&\;=\; \int_{0}^{T} \bigl(\Bar{\mathring{r}}(\Hat{\eta}^{T}_{t},\Hat{v}_{t}) +
\Bar{F}(\Hat{\eta}^{T}_{t},\Hat{\eta}^{T}_{t})-\Bar{\varrho}\bigr)\,\D{t}
+\Hat{\eta}^{T}_{T}(\varphi_{0})\,,\label{EcompA}\\[5pt]
\Bar\mu(\varphi^T_{T})
&\;\le\; \int_{0}^{T}
\bigl(\Bar{\mathring{r}}(\Bar\mu,\Bar{v}) +
\Bar{F}(\Bar\mu,\Hat{\eta}^{T}_{t})-\Bar{\varrho}\bigr)\,\D{t}
+\Bar{\mu}(\varphi_{0})\,.\label{EcompB}
\end{align}
Repeating this with terminal cost $\Bar{V}$, and using
\eqref{T4.2a}, we obtain
\begin{align}
\Bar{\mu}(\Bar{V}) &\;=\; \int_{0}^{T} \bigl(\Bar{\mathring{r}}(\Bar{\mu},\Bar{v})
 +\Bar{F}(\Bar{\mu},\Bar{\mu})-\Bar{\varrho}\bigr)\,\D{t}
+ \Bar{\mu}(\Bar{V})\,,\label{EcompC}\\[5pt]
\Hat{\eta}^{T}_{0}(\Bar{V})
&\;\le\; \int_{0}^{T} \bigl(\Bar{\mathring{r}}(\Hat{\eta}^{T}_{t},\Hat{v}_{t}) +
\Bar{F}(\Hat{\eta}^{T}_{t},\Bar{\mu})-\Bar{\varrho}\bigr)\,\D{t}
 + \Hat{\eta}^{T}_{T}(\Bar{V})\,.\label{EcompD}
\end{align}
Adding together \eqref{EcompA}--\eqref{EcompC} and subtracting
\eqref{EcompB}--\eqref{EcompD} we obtain
\begin{equation}\label{Ecomp1}
\int_{0}^{T} \mathscr{F}(\Hat{\eta}^{T}_{t},\Bar{\mu})\,\D{t}\;\le\;
(\Hat{\eta}^{T}_{0}-\Bar\mu)(\varphi^T_T-\Bar V)
- (\Hat{\eta}^{T}_{T}-\Bar\mu)(\varphi_0-\Bar V)\,.
\end{equation}

Define
\begin{equation*}
\varGamma_{T}(t) \;\df\;
\bigl(\Hat\eta^{T}_{T-t}-\Bar\mu\bigr)
\bigl(\varphi^T_{t}-\Bar{V}\bigr)\,,\quad t\in[0,T]\,.
\end{equation*}
In complete analogy to \eqref{Ecomp1} we have
\begin{equation}\label{EcompX}
\int_{t_{1}}^{t_{2}}\mathscr{F}(\Hat{\eta}^{T}_{T-s},\Bar{\mu})\,\D{s}
\;\le\; \varGamma_{T}(t_2)-\Gamma_{T}(t_1)\,,\qquad t_1\le t_2\,. 
\end{equation}

\begin{remark}\label{R4.1}
We often use in the proofs the following fact:
if
$f_{t},\,h_{t}:\Rd\to\RR$ and $g:\Rd\to\RR$ are such
that $\sup_{t\ge0}\,\norm{f_{t}}_{\Lyap}<\infty$, $\norm{g}_{\Lyap}<\infty$,
and
$h_{t}(x)
= \Exp^{\Bar{v}}_{x}\bigl[\int_{0}^{t} f_{s}(X_{s})\,\D{s} + g(X_{t})\bigr]$,
then it holds that $\sup_{t>0}\,\norm{h_{t}(x)-h_{t}(0)}_{\Lyap}<\infty$.
Indeed, by \eqref{E-geom} we have
\begin{equation*}
\babss{h_{t}(x) - \int_{0}^{t} \Bar{\mu}(f_{s})\,\D{s} - \Bar{\mu}(g)}
\;\le\; \bigl(1+\Lyap(x))
\biggl(\int_{0}^{t}M_{0}\,\E^{-\gamma s}\norm{f_{s}}_{\Lyap}\,\D{s}
+ \norm{g}_{\Lyap}\biggr)\,,
\end{equation*}
so that
\begin{align*}
\abs{h_{t}(x)-h_{t}(0)}&\;\le\;
\bigl(2+\Lyap(x)+\Lyap(0))
\biggl(\int_{0}^{t}M_{0}\,\E^{-\gamma s}\norm{f_{s}}_{\Lyap}\,\D{s}
+ \E^{-\gamma t}\,\norm{g}_{\Lyap}\biggr)\\[5pt]
&\;\le\;
\bigl(2+\Lyap(x)+\Lyap(0))
\biggl(\frac{M_{0}}{\gamma}\,\sup_{s\ge0}\,\norm{f_{s}}_{\Lyap}
+ \E^{-\gamma t}\,\norm{g}_{\Lyap}\biggr)\,.
\end{align*}
\end{remark}

We start with the following result.

\begin{theorem}\label{T-basic}
Let Assumptions~\ref{A4.1}--\ref{A4.2} hold.
Then, for all $\lambda\in[0,1)$, we have
\begin{equation*}
\frac{1}{(1-\lambda) T}\;\int_{\lambda T}^{T}\Hat{\eta}^{T}_{t}\,\D{t}
\;\xrightarrow[T\to\infty]{}\;\Bar\mu\quad\text{in~} \Pm(\Rd)\,.
\end{equation*}
Moreover,
\begin{equation*}
\sup_{T>0}\;
\int_{0}^{T} \mathscr{F}(\Hat{\eta}^{T}_{t},\Bar{\mu})\,\D{t}\;<\; \infty\,.
\end{equation*}
\end{theorem}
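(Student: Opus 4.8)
The argument starts from the comparison inequalities \eqref{EcompA}--\eqref{EcompD}, which encode the optimality of $\Hat{v}_{t}$ for \eqref{VI} and of $\Bar{v}$ for \eqref{T4.2a} and, combined with the monotonicity \eqref{E-mon}, give \eqref{Ecomp1} and \eqref{EcompX}. Since $\mathscr{F}(\,\cdot\,,\Bar\mu)\ge0$, the plan is to reduce both assertions to the single uniform estimate
\begin{equation*}
\sup_{T>0}\;\sup_{0\le t\le T}\;\bnorm{\varphi^{T}_{t}-\varphi^{T}_{t}(0)}_{\Lyap}\;<\;\infty\,.
\end{equation*}
Granting this, the second assertion is immediate from \eqref{Ecomp1}: both terms on its right-hand side pair a zero-mass signed measure (namely $\Hat\eta^{T}_{0}-\Bar\mu$, respectively $\Hat\eta^{T}_{T}-\Bar\mu$) against $\varphi^{T}_{T}-\Bar V$, respectively $\varphi_{0}-\Bar V$; subtracting the constant $\varphi^{T}_{T}(0)$ and using $\Bar V(0)=0$, $\Bar V\in\Cc_{\Lyap}(\Rd)$, and the fact that $\Hat\eta^{T}_{T}$, $\Bar\mu$ and $\eta$ all have uniformly bounded $\Lyap$-mass (by \eqref{EA4.2b}, $\eta$ having finite $\Lyap$-moment), one bounds the right-hand side of \eqref{Ecomp1} by a constant independent of $T$. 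The same centering shows that $\varGamma_{T}(t)=(\Hat\eta^{T}_{T-t}-\Bar\mu)(\varphi^{T}_{t}-\Bar V)$ is bounded uniformly in $T>0$ and $t\in[0,T]$, a fact used below.

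The crux, and the point where I expect the main difficulty, is the displayed bound. Put $\mathscr{M}'_{0}\df\{\mu\in\Pm(\Rd):\mu(\Lyap)\le c_{0}/c_{1}+\eta(\Lyap)\}$; by \eqref{EA4.2b} every $\Hat\eta^{T}_{t}$ ($0\le t\le T$, $T>0$) lies in $\mathscr{M}'_{0}$, which, exactly as for $\mathscr{M}_{0}$, is compact in $\Pm_{p}(\Rd)$. Since $\mu\mapsto F(\,\cdot\,,\mu)$ is continuous from $\Pm_{p}(\Rd)$ into $\Cc_{\Lyap}(\Rd)$ (Assumption~\ref{A4.2}(iii)) and $\sup_{u\in\Act}\mathring r(\,\cdot\,,u)\in\order(\Lyap)$ (Assumption~\ref{A4.2}(i)), we get $C_{r}\df\sup_{\mu\in\mathscr{M}'_{0}}\norm{r_{\mu}}_{\Lyap}<\infty$. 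By the stochastic representation \eqref{E-VISR}, $\varphi^{T}_{t}(x)+\Bar\varrho\,t=\Exp^{\Hat{v}^{T}}_{x}\bigl[\int_{0}^{t}f_{s}(X_{s})\,\D{s}+\varphi_{0}(X_{t})\bigr]$ with $\sup_{0\le s\le t}\norm{f_{s}}_{\Lyap}\le C_{r}$ and $\norm{\varphi_{0}}_{\Lyap}<\infty$, uniformly in $t$ and $T$. The time-inhomogeneous analogue of Remark~\ref{R4.1} then gives $\abs{\varphi^{T}_{t}(x)-\varphi^{T}_{t}(0)}\le(2+\Lyap(x)+\Lyap(0))\bigl(\tfrac{M_{0}}{\gamma}C_{r}+\norm{\varphi_{0}}_{\Lyap}\bigr)$ for all $(t,x)$ and all $T$, which is the desired bound. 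This analogue is valid because the Foster--Lyapunov inequality \eqref{E-Lyap} together with the uniform nondegeneracy (A3) yields the geometric ergodicity estimate \eqref{E-geom} with constants independent of the (possibly time-dependent) Markov control; establishing, or locating in the literature, this uniform ergodicity over the class of admissible controls is the one genuinely nonroutine ingredient here.

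For the first assertion, set $\bar\nu^{\lambda}_{T}\df\frac{1}{(1-\lambda)T}\int_{\lambda T}^{T}\Hat\eta^{T}_{t}\,\D{t}$. Then $\bar\nu^{\lambda}_{T}\in\mathscr{M}'_{0}$, so $\{\bar\nu^{\lambda}_{T}\}_{T>0}$ is precompact in $\Pm_{p}(\Rd)$, hence in $\Pm(\Rd)$, and it suffices to identify every subsequential limit. By the second assertion, $\frac{1}{(1-\lambda)T}\int_{\lambda T}^{T}\mathscr{F}(\Hat\eta^{T}_{t},\Bar\mu)\,\D{t}\to0$ as $T\to\infty$; since $\mathscr{F}\ge0$, for each $\eps>0$ the set $\{t\in[\lambda T,T]:\mathscr{F}(\Hat\eta^{T}_{t},\Bar\mu)>\eps\}$ has relative Lebesgue density tending to $0$, so, using the continuity of $\mathscr{F}(\,\cdot\,,\Bar\mu)$ on the compact set $\mathscr{M}'_{0}$ (Assumption~\ref{A4.2}(iii)) and intersecting over $\eps$, any subsequential limit $\nu^{\ast}$ of $\bar\nu^{\lambda}_{T}$ lies in $\overline{\conv}\,\{\mu\in\mathscr{M}'_{0}:\mathscr{F}(\mu,\Bar\mu)=0\}$. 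It also lies in $\eH$: integrating the Fokker--Planck identity $\tfrac{\D}{\D{t}}\Hat\eta^{T}_{t}(f)=\int_{\Rd\times\Act}L^{u}f(x)\,(\Hat\eta^{T}_{t}\cast\Hat{v}^{T}_{t})(\D{x},\D{u})$ over $[\lambda T,T]$, dividing by $(1-\lambda)T$, and letting $T\to\infty$ shows that any limit of the mean empirical measures $\frac{1}{(1-\lambda)T}\int_{\lambda T}^{T}\Hat\eta^{T}_{t}\cast\Hat{v}^{T}_{t}\,\D{t}$ (tight, since $\Act$ is compact and $\{\bar\nu^{\lambda}_{T}\}$ is tight) belongs to $\eG$ with $\Rd$-marginal $\nu^{\ast}$. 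Finally $\nu^{\ast}=\Bar\mu$: this is the heart of the matter and follows from \eqref{E-mon} together with the uniqueness of the ergodic MFG solution guaranteed by Assumption~\ref{A4.1}(iv), via a limiting form of the Lasry--Lions comparison computation in which $\nu^{\ast}$ is paired with a subsequential limit of the centered value functions $\psi^{T}_{t}-\psi^{T}_{t}(0)$ (bounded in $\Cc_{\Lyap}(\Rd)$ by the estimate of the previous paragraph, cf.\ \eqref{viloc}). This identification, along with that uniform estimate, is where the real work lies. Since $\nu^{\ast}$ is independent of the subsequence and of $\lambda$, the Cesàro averages converge to $\Bar\mu$ in $\Pm(\Rd)$.
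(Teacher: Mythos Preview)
Your argument rests on the uniform estimate $\sup_{T>0}\sup_{t\in[0,T]}\norm{\varphi^{T}_{t}-\varphi^{T}_{t}(0)}_{\Lyap}<\infty$, which you try to obtain by applying a ``time-inhomogeneous analogue of Remark~\ref{R4.1}'' to the stochastic representation \eqref{E-VISR}. This is a genuine gap. Remark~\ref{R4.1} uses geometric ergodicity \eqref{E-geom} under the \emph{stationary} control $\Bar{v}$; extending it to the nonstationary optimal control $\Hat{v}^{T}$ would require a contraction or coupling estimate of the type $\babs{\Exp^{U}_{x}[h(X_{t})]-\Exp^{U}_{y}[h(X_{t})]}\le M_{0}\E^{-\gamma t}\norm{h}_{\Lyap}(\Lyap(x)+\Lyap(y))$ uniformly over time-dependent admissible controls. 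The Foster--Lyapunov bound \eqref{E-Lyap} alone does not give this; the paper isolates exactly your estimate as Property~\ref{P4.1} and states explicitly that it is \emph{unclear} whether Assumptions~\ref{A4.1}--\ref{A4.2} imply it, proving it only under the additional hypothesis of asymptotic flatness (Lemma~\ref{Ln4.4}). So your ``one genuinely nonroutine ingredient'' is in fact an extra hypothesis that Theorem~\ref{T-basic} is designed to avoid.

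The paper's proof circumvents this entirely: it introduces the auxiliary quantities $\Hat{g}^{T}_{t}$ (value under $\Hat{v}^{T}$ with cost $r_{\Bar\mu}$) and $\Bar{g}^{T}_{t}$ (value under $\Bar{v}$ with cost $r_{\Hat\eta^{T}}$), and uses suboptimality to compare $\varphi^{T}_{t}$ with $\Bar{g}^{T}_{t}$ and $\Bar{V}$ with $\Hat{g}^{T}_{t}$. The point is that $\Bar{g}^{T}_{t}$ is an expectation under the \emph{stationary} control $\Bar{v}$, so Remark~\ref{R4.1} applies to it directly and gives $\sup_{T,t}\norm{\Bar{g}^{T}_{t}-\Bar{V}-\Bar{g}^{T}_{t}(0)}_{\Lyap}<\infty$. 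Combining with \eqref{E-mon} and the suboptimality inequalities yields the two one-sided bounds \eqref{TbasicX}, from which both assertions follow: the integral of $\mathscr{F}$ from the bounds themselves, and the Ces\`aro convergence from $\Hat{\eta}^{T}_{T-t}(\Hat{g}^{T}_{t}-\Bar{V})\le C'$, which forces the mean empirical measures $\Breve\uppi^{T}_{\lambda}$ to satisfy $\Breve\uppi^{T}_{\lambda}(r_{\Bar\mu})\to\Bar\varrho$ and hence converge to the unique element of $\calA(\Bar\mu)$. Your route to the first assertion---passing through $\mathscr{F}(\nu^{*},\Bar\mu)=0$ and an unspecified ``limiting Lasry--Lions comparison''---is also incomplete: the monotonicity \eqref{E-mon} is not strict, so $\mathscr{F}(\nu^{*},\Bar\mu)=0$ together with $\nu^{*}\in\eH$ does not by itself force $\nu^{*}=\Bar\mu$, and you have not carried out the promised comparison.
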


\begin{proof}
Let
\begin{align*}
\Hat{g}^{T}_{t}(x)&\;\df\; \Exp^{\Hat{v}^{T}}_{x}
\biggl[\int_{0}^{t}r_{\Bar\mu}\bigl(X_{s},\Hat{v}^{T}_{T-t+s}(X_{s})\bigr)\,\D{s}
+\Bar{V}(X_{t})\biggr]-\Bar{\varrho}\,t\,,\\[5pt]
\Bar{g}^{T}_{t}(x)&\;\df\; \Exp^{\Bar{v}}_{x}
\biggl[\int_{0}^{t}r_{\Hat{\eta}^{T}_{T-t+s}}\bigl(X_{s},\Bar{v}(X_{s})\bigr)\,\D{s}
+\varphi_{0}(X_{t})\biggr]-\Bar{\varrho}\,t\,.
\end{align*}
Since
\begin{equation*}
\Bar{g}^{T}_{t}(x)-\Bar{V}(x)
\;=\;\Exp^{\Bar{v}}_{x}
\biggl[\int_{0}^{t}\bigl(F(X_{s},\Hat{\eta}^{T}_{T-t+s})
-F(X_{s},\Bar\mu)\bigr)\,\D{s} + \varphi_{0}(X_{t}) - \Bar{V}(X_{t})\biggr]\,,
\end{equation*}
it follows by \eqref{E-geom} and Remark~\ref{R4.1} that
$$\sup_{T>0}\;\sup_{t\,\in\,[0,T]}\;
\bnorm{\Bar{g}^{T}_{t}(x)-\Bar{V}(x)-\Bar{g}^{T}_{t}(0)-\Bar{V}(0)}_{\Lyap}
\;<\;\infty\,.$$
Therefore, for some constant $C$, we have
$\abs{(\Hat{\eta}^{T}_{T-t}-\Bar\mu)(\Bar{g}^{T}_{t}-\Bar{V})}< C$
for all $t\in[0,T]$ and $T>0$.
Hence, by \eqref{E-mon}, we have
\begin{align}\label{T-basicB}
\Hat{\eta}^{T}_{T-t}(\varphi^{T}_{t}) &\;\ge\;
\Hat{\eta}^{T}_{T-t}(\Hat{g}^{T}_{t}) +\Bar{\mu}(\Bar{g}^{T}_{t}-\Bar{V})
+(\Hat\eta^{T}_{T}-\Bar\mu)(\varphi_{0}-\Bar{V})
\nonumber\\[5pt]
&\;\ge\;\Hat{\eta}^{T}_{T-t}(\Hat{g}^{T}_{t})
+ \Hat{\eta}^{T}_{T-t}(\Bar{g}^{T}_{t}-\Bar{V})-C
+(\Hat\eta^{T}_{T}-\Bar\mu)(\varphi_{0}-\Bar{V})
\end{align}
for all $t\in[0,T]$ and $T>0$.
By suboptimality $\Hat{g}^{T}_{t}\ge \Bar{V}$ and $\Bar{g}^{T}_{t}\ge\varphi^{T}_{t}$.
Also
$$\abs{(\Hat\eta^{T}_{T}-\Bar\mu)(\varphi_{0}-\Bar{V})}
\;\le\; 2\frac{c_{0}}{c_{1}} +\norm{\varphi_{0}-\Bar{V}}_{\Lyap}
\bigl(\Hat\eta_{0}(\Lyap)+\Bar\mu(\Lyap)\bigr)\,.
$$
Hence, for some constant $C'$ we obtain
\begin{equation}\label{TbasicX}
\begin{split}
0&\;\le\;\Hat{\eta}^{T}_{T-t}(\Hat{g}^{T}_{t}-\Bar{V})\;\le\;C'\,,\\[5pt]
0&\;\le\;\Hat{\eta}^{T}_{T-t}(\Bar{g}^{T}_t-\varphi^{T}_{t})\;\le\; C'
\end{split}
\end{equation}
for all $t\in[0,T]$ and $T>0$.
From the first equation in \eqref{TbasicX}, and since $\Hat\eta^{T}_{T}(\Bar{V})$,
$\Hat\eta^{T}_{T}(\varphi_{0})$,
and $\int_{0}^{T} \bigl(\Bar{F}(\Law^{\Bar{v}}_{\Hat\eta_{0}}(X_{t}),\Bar\mu)
-\Bar{F}(\Bar\mu,\Bar\mu)\bigr)\,\D{t}$
are bounded uniformly in $T>0$, using a triangle inequality we obtain
\begin{equation*}
\sup_{T>0}\;
\babss{\int_{0}^{T} \bigl(\Bar{F}(\Hat\eta^T_t,\Bar\mu)
-\Bar{F}(\Bar\mu,\Bar\mu)\bigr)\,\D{t}\,}\;<\; C''
\end{equation*}
for some constant $C''$.
Similarly from the second equation, using the same constant $C''$, without
loss of generality, we have
\begin{equation*}
\sup_{T>0}\;
\babss{\int_{0}^{T} \bigl(\Bar{F}(\Bar\mu,\Hat\eta^T_t)
-\Bar{F}(\Hat\eta^T_t,\Hat\eta^T_t)\bigr)\,
\D{t}\,}\;<\; C''\,.
\end{equation*}
The second assertion of the theorem follows from these bounds.

From the first inequality in \eqref{TbasicX}
we obtain $0\le\Hat{\eta}^{T}_{(1-\lambda)T}(\Hat{g}^{T}_{\lambda T}-\Bar{V})\le C$
for all $\lambda\in[0,1]$.
Therefore, we have
\begin{equation}\label{T-basicD}
\frac{1}{\lambda T}\;\Hat{\eta}^{T}_{(1-\lambda)T}(\Hat{g}^{T}_{\lambda T})
\;\xrightarrow[T\to\infty]{}\;0\qquad\forall\,\lambda\in(0,1]\,.
\end{equation}
Let
\begin{equation*}
\Breve{\uppi}^{T}_{\lambda}(\D{x},\D{u})\;\df\;
\frac{1}{\lambda T}\int_{0}^{\lambda T}
\Hat{\eta}^{T}_{(1-\lambda)T+s}(\D{x})\cast\Hat{v}^{T}_{(1-\lambda)T+s}(\D{u}\mid x)
\,\D{s}\,,
\quad\lambda\in(0,1]\,,
\end{equation*}
and $\Bar\uppi \df \Bar\mu\cast\Bar{v}$.
Since $\Bar\uppi(r_{\Bar{\mu}})=\Bar{\varrho}$,
and write \eqref{T-basicD} as
\begin{equation*}
0\;\ge\;\int_{\Rd\times\Act}
\Bigl(\Breve{\uppi}^{T}_{\lambda}(\D{x},\D{u})
- \Bar\uppi(\D{x},\D{u})\Bigr)\, r_{\Bar\mu}(x,u)
\;\xrightarrow[T\to\infty]{}\;0\,.
\end{equation*}
Since $\{\Breve{\uppi}^{T}_{\lambda}\,,\;T>0\}$ is tight,
any limit point of $\Breve{\uppi}^{T}_{\lambda}$ as $T\to\infty$
is an element of $\eG$ \cite[Lemma~3.4.6]{ari-bor-ghosh}.
Let $\{T_{n}\}$ be any sequence, and select a subsequence also denoted
as $\{T_n\}$ along which
$\Breve{\uppi}^{T_{n}}_{\lambda}\to\Breve{\uppi}^{*}\in\eG$.
Then $\Breve{\uppi}^{*}(r_{\Bar\mu})=\Bar\uppi(r_{\Bar\mu})$,
and since by Assumption~\ref{A4.1}\,(iv) the set $\cA(\Bar\mu)$ is a singleton
it follows that $\Breve{\uppi}^{*}=\Bar\uppi$ which, in turn, implies
the first assertion in the theorem.
\end{proof}

We also have the following simple lemma concerning the growth
of $\varphi^T_{t}(0)$ in $t$.

\begin{lemma}\label{L-aux}
Let Assumptions~\ref{A4.1}--\ref{A4.2} hold.
Then there exists a constant $\Bar{C}_{0}>0$ which depends
only on $\varphi_{0}$ and $\Law(X_{0})$, such that
\begin{equation*}
\babs{\varphi^T_{t}(0)-\varphi^T_{t-\tau}(0)}\;\le\;
\Bar{C}_{0} (1+\tau)\qquad \text{for all}~
t\in[0,T]\,,~\tau\in[0,t]\,,~\text{and}~T>0\,.
\end{equation*}
\end{lemma}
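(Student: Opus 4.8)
The plan is to use the dynamic programming identity for the value iteration to peel off the contribution of the process over the short interval $[0,\tau]$, and to absorb what remains into a horizon‑uniform spatial oscillation bound for $\varphi^{T}$. Applying the dynamic programming identity for $\varphi^{T}$ displayed just after \eqref{E-VISR} with $t_{1}=t-\tau$, $t_{2}=t$, and evaluating at $x=0$, one obtains
\[
\varphi^{T}_{t}(0)-\varphi^{T}_{t-\tau}(0)
\;=\;\Exp^{\Hat{v}^{T}}_{0}\biggl[\int_{0}^{\tau}
r_{\Hat{\eta}^{T}_{T-t+s}}\bigl(X_{s},\Hat{v}^{T}_{T-t+s}(X_{s})\bigr)\,\D{s}\biggr]
-\Bar{\varrho}\,\tau
+\Exp^{\Hat{v}^{T}}_{0}\bigl[\varphi^{T}_{t-\tau}(X_{\tau})-\varphi^{T}_{t-\tau}(0)\bigr]\,.
\]
Since this is an identity---the optimal non‑stationary Markov control $\Hat{v}^{T}$ appears throughout---it is enough to bound the three terms on the right by a quantity of the form $\Bar{C}_{0}(1+\tau)$.

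For the first term, observe that since $\tau\le t\le T$ all the indices $T-t+s$, $s\in[0,\tau]$, lie in $[0,T]$, so by \eqref{EA4.2b} the laws $\Hat{\eta}^{T}_{T-t+s}$ range over a fixed compact subset $\calK_{0}$ of $\Pm_{p}(\Rd)$ depending only on $\Law(X_{0})$ and the constants in \eqref{E-Lyap} (compact in $\Pm_{p}(\Rd)$ for the same reason $\mathscr{M}_{0}$ is). On $\calK_{0}$, Assumption~\ref{A4.2}\,(iii) makes $\mu\mapsto F(\cdot\,,\mu)$ bounded in $\Cc_{\Lyap}(\Rd)$, while Assumption~\ref{A4.2}\,(i) together with the super‑$p$ growth of $\Lyap$ gives $\sup_{u}\mathring r(\cdot\,,u)\in\order(\Lyap)$; hence $r_{\mu}(\cdot\,,\cdot)\in\order(\Lyap)$ uniformly over $\mu\in\calK_{0}$. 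Since \eqref{E-Lyap} holds for every control, \eqref{EA4.2b} gives $\Exp^{\Hat{v}^{T}}_{0}[\Lyap(X_{s})]\le\frac{c_{0}}{c_{1}}+\Lyap(0)$ for all $s\ge0$, and therefore the first term is at most $C\tau$ for a constant $C$ depending only on the data and on $\Law(X_{0})$; the second term contributes $\abs{\Bar{\varrho}}\tau$.

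The third---and only delicate---term is controlled once we know the horizon‑uniform oscillation bound $C_{1}\df\sup_{T>0}\sup_{s\in[0,T]}\bnorm{\varphi^{T}_{s}-\varphi^{T}_{s}(0)}_{\Lyap}<\infty$: indeed it then follows from \eqref{EA4.2b} that $\bigl|\Exp^{\Hat{v}^{T}}_{0}[\varphi^{T}_{t-\tau}(X_{\tau})-\varphi^{T}_{t-\tau}(0)]\bigr|\le C_{1}\Exp^{\Hat{v}^{T}}_{0}[\Lyap(X_{\tau})]\le C_{1}(\frac{c_{0}}{c_{1}}+\Lyap(0))$, a constant independent of $t,\tau,T$. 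To establish $C_{1}<\infty$ I would start from the stochastic representation \eqref{E-VISR}, subtract its value at $x=0$, and estimate the resulting difference of expectations by the $\Lyap$‑geometric ergodicity \eqref{E-geom}, using (from the previous paragraph) that the running cost is in $\order(\Lyap)$ uniformly in $s$ and $T$ and that $\varphi_{0}\in\sorder(\Lyap)$. The main obstacle is that the relevant control $\Hat{v}^{T}$ is non‑stationary, so one must verify that the coupling estimate underlying \eqref{E-geom} and Remark~\ref{R4.1} applies to the time‑inhomogeneous diffusion generated by $\Hat{v}^{T}$, with constants $M_{0},\gamma$ independent of the horizon; this holds because each time‑slice $\Hat{v}_{r}$ is a stationary Markov control and because the drift hypothesis \eqref{E-Lyap} and the nondegeneracy (A3) are uniform over all controls, so that the argument of Remark~\ref{R4.1} carries over and yields $\bnorm{\varphi^{T}_{s}-\varphi^{T}_{s}(0)}_{\Lyap}\le\frac{M_{0}}{\gamma}\sup_{r}\bnorm{r_{\Hat{\eta}^{T}_{T-s+r}}(\cdot\,,\cdot)}_{\Lyap}+M_{0}\norm{\varphi_{0}}_{\Lyap}$ uniformly in $s\in[0,T]$ and $T>0$ (alternatively, $C_{1}<\infty$ may be quoted from \cite{RVI,RVIM}).

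Combining the three bounds gives $\babs{\varphi^{T}_{t}(0)-\varphi^{T}_{t-\tau}(0)}\le(C+\abs{\Bar{\varrho}})\tau+C_{1}(\frac{c_{0}}{c_{1}}+\Lyap(0))\le\Bar{C}_{0}(1+\tau)$, with $\Bar{C}_{0}$ depending only on the data, on $\varphi_{0}$ (through $C_{1}$ and $\norm{\varphi_{0}}_{\Lyap}$), and on $\Law(X_{0})$ (through $\calK_{0}$ and $C_{1}$), but not on $t,\tau$ or $T$, which is the assertion.
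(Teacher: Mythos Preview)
Your decomposition via the dynamic programming identity is correct, and so are the bounds on the first two terms. The trouble is the third term: bounding $\Exp^{\Hat v^T}_0\bigl[\varphi^T_{t-\tau}(X_\tau)-\varphi^T_{t-\tau}(0)\bigr]$ uniformly in $t,\tau,T$ requires precisely $C_1=\sup_{T>0}\sup_{s\in[0,T]}\bnorm{\varphi^T_s-\varphi^T_s(0)}_\Lyap<\infty$, which is Property~\ref{P4.1}. The paper introduces Property~\ref{P4.1} only \emph{after} this lemma and states explicitly that ``It is unclear if Assumption~\ref{A4.2} suffices to establish Property~\ref{P4.1}''; it is verified only under the additional hypothesis of asymptotic flatness (Lemma~\ref{Ln4.4}). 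Your sketch for $C_1<\infty$---carrying \eqref{E-geom} and Remark~\ref{R4.1} over to the time-inhomogeneous diffusion driven by $\Hat v^T$ with horizon-independent constants---is exactly what Property~\ref{P4.1} would deliver; but \eqref{E-geom} is stated only for stationary $v\in\Usm$, and the claimed extension with constants uniform over all non-stationary Markov controls is not among the hypotheses or results available here. The references \cite{RVI,RVIM} do not close the gap either, since there the running cost carries no time-dependent MFG coupling $F(\cdot,\Hat\eta^T_{T-t+s})$. As written, your argument assumes what the paper deliberately leaves open at this stage; Lemma~\ref{L-aux} is later used together with Property~\ref{P4.1} as \emph{independent} inputs in the proof of Theorem~\ref{Tn4.3}.

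The paper sidesteps this by not invoking the Bellman recursion at all. It writes both $\varphi^T_t(0)$ and $\varphi^T_{t-\tau}(0)$ via the full representation \eqref{E-VISR} as infima over $U\in\Uadm$ with the \emph{given} terminal cost $\varphi_0$ (not $\varphi^T_{t-\tau}$), and bounds the difference of the two infima by the supremum over $U$ of the difference. The terminal contribution is then controlled through $\norm{\varphi_0}_\Lyap$ and \eqref{EA4.2b}, while the running-cost contribution over the mismatched window is handled via the uniform bound $\Exp^U_0\,\babs{r_{\Hat\eta^T_{T-t}}(X_s,U_s)}\le\Bar C$. No information about the spatial oscillation of $\varphi^T_{t-\tau}$ enters.
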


\begin{proof}
By Assumptions~\ref{A4.1} and \ref{A4.2},
there is a constant $\Bar{C}$ depending only on $\varphi_{0}$ and $\Law(X_{0})$,
such that
\begin{equation*}
\Exp^{U}_{0}\,\babs{r_{\Hat{\eta}^{T}_{T-t}}(X_{s},U_{s})}
\;\le\; \Bar{C}
\end{equation*}
for all $t\in[0,T]$, $s\ge0$, $U\in\Uadm$, and $T>0$.
Therefore, we have
\begin{align*}
\babs{\varphi^T_{t-\tau}(0)-\varphi^T_{t}(0)}
&\;=\; \babss{\inf_{U}\;\Exp^{U}_{0}
\biggl[\int_{0}^{t-\tau}r_{\Hat{\eta}^{T}_{T-t+\tau+s}}(X_{s},U_{s})\,\D{s}
+\varphi_{0}(X_{t-\tau})\biggr]\\[5pt]
&\mspace{100mu}
- \inf_{U}\;\Exp^{U}_{0}
\biggl[\int_{0}^{t}r_{\Hat{\eta}^{T}_{T-t+s}}(X_{s},U_{s})\,\D{s}
+\varphi_{0}(X_{t})\biggr]}
\nonumber\\[5pt]
&\;\le\;
\sup_{U}\;\Exp^{U}_{0}
\biggl[\int_{t-\tau}^{t}\abs{r_{\Hat{\eta}^{T}_{T-t+s}}(X_{s},U_{s})}\,\D{s}
+\babs{\varphi_{0}(X_{t-\tau})-\varphi_{0}(X_{t})}\biggr]
\nonumber\\[5pt]
&\;\le\; \Bar{C} \tau + 2\norm{\varphi_{0}}_{\Lyap}
\bigl(\tfrac{c_{0}}{c_{1}} + \E^{-c_{1}t}\Lyap(0)\bigr)\,.\qedhere
\end{align*}
\end{proof}

\subsection{Convergence of the RVI}

Theorem~\ref{T-basic} shows that $\Hat{\eta}^{T}$ converges to $\Bar{\mu}$
in a time average sense.
We wish to show that $\psi^T_{\lambda T}$ converges to $\Bar{V}-\Bar{\varrho}$
as $T\to\infty$.
It is evident by \eqref{viloc} that this cannot happen unless
$\varphi^T_{\lambda T} -\varphi^T_{\lambda T}(0)$ is at least locally bounded,
uniformly in $T>0$.
We state this necessary condition for convergence as a property.

\begin{property}\label{P4.1}
Define $\overline\varphi^{T}_{t}\df\varphi^{T}_{t}-\varphi^{T}_{t}(0)$.
Suppose that $\Hat{\eta}_{0}(\Lyap)\le \kappa_{1}$ and
$\norm{\varphi_{0}}_{\Lyap}\le\kappa_2$,
where $\Lyap$ is as in Assumption~\ref{A4.2}.
Then there exists a constant $\Tilde{C}_{1}=\Tilde{C}_{1}(\kappa_{1},\kappa_{2})$
such that
\begin{equation*}
\sup_{T>0}\;\sup_{t\,\in\,[0,T]}\;
\bnorm{\overline\varphi^T_{t}}_{\Lyap}
\;<\;\Tilde{C}_{1}\,.
\end{equation*}
\end{property}

It is unclear if Assumption~\ref{A4.2} suffices to
establish Property~\ref{P4.1}.
Instead of imposing additional assumptions on $F$, we choose instead
to show that this property is satisfied
for a large class of controlled diffusions, and then assume only
Property~\ref{P4.1} in the statement of the main results.

We introduce the following notation: 
for $x$, $z$ in $\RR^{d}$ define
\begin{align*}
\Delta_{z}b(x,u)&\;\df\; b(x+z,u)-b(x,u)\,,\\[2pt]
\Delta_{z}\upsigma(x)&\;\df\;\upsigma(x+z)-\upsigma(x)\,,\\[2pt]
\Tilde{a}(x;z) &\;\df\; \Delta_{z}\upsigma(x) 
\Delta_{z}\upsigma\transp(x)\,.
\end{align*}

\begin{definition}
We say that the controlled diffusion in \eqref{E-sde} is
\emph{asymptotically flat} if the following hold:
\begin{itemize}
\item[(a)]
The diffusion matrix $\upsigma$ is Lipschitz continuous.
\item[(b)]
There exist a symmetric positive definite matrix $Q$ and a
constant $r > 0$ such that for $x,z \in \RR^{d}$, with $z\ne0$,
and $u \in \Act$, it holds that
\begin{equation*}
2 \Delta_{z}b\transp(x,u) Q z -
\frac{\abs{\Delta_{z}\upsigma\transp(x) Qz}^{2}}{z\transp Q z}
+ \trace \bigl(\Tilde{a}(x;z) Q\bigr)
\;\le\; -r \abs{z}^{2}\,.
\end{equation*}
\end{itemize}
\end{definition}

A standard model of asymptotically flat diffusions is
given by $\Act = [0, 1]^{d}$, $b(x,u) = Bx + Du$, where
$B$, $D$ are
constant $d \times d$ matrices and $B$ is Hurwitz (i.e., its eigenvalues
have negative real parts).
Note also that if $\upsigma$ is constant, then asymptotic flatness amounts
to the requirement that
$\langle b(x+z,u)-b(x,u), Qz\rangle \le -r \abs{z}^{2}$.
Nevertheless, the class of asymptotically flat diffusions is significantly richer
than models with stable linear drifts.
Asymptotically flat diffusions satisfy an ``incremental stability'' property.
For recent work along similar directions see \cite{Caraballo-04,Pham-09}.

We quote the following result \cite[Lemmas~7.3.4 and~7.3.6]{ari-bor-ghosh}.

\begin{lemma}\label{Ln4.3}
Suppose that the diffusion in \eqref{E-sde} is asymptotically flat,
and let $X^{x}_{t}$ be the solution with initial condition
$X_{0} = x$, corresponding to an admissible relaxed control $U$.
Then there exist constants
$\Hat{c}_{0} > 0$ and $\Hat{c}_{1} > 0$, which do not depend on $U$, such that
\begin{equation}\label{Ln4.3a}
\Exp^{U}\,\babs{X^{x}_{t} - X^{y}_{t}}\;\le\; \Hat{c}_{0}
\E^{-\Hat{c}_{1}t} \abs{x-y} \qquad
\forall x,\,y\in\RR^{d}\,.
\end{equation}
Moreover there exists a nonnegative, inf-compact $\Lyap\in\Cc^{2}(\Rd)$
satisfying \eqref{E-Lyap}, and such that
\begin{equation}\label{Ln4.3b}
\frac{\Lyap(x)}{1+\abs{x}^{p_{0}}}
\;\xrightarrow[\abs{x}\to\infty]{}\;\infty
\end{equation}
for some $p_{0}>1$.
\end{lemma}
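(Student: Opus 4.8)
This is \cite[Lemmas~7.3.4 and~7.3.6]{ari-bor-ghosh}; the two assertions are essentially independent and I would argue as follows.

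For \eqref{Ln4.3a} the plan is a synchronous coupling. Fix $x,y\in\RR^{d}$ and an admissible relaxed control $U$, and let $X^{x}$ and $X^{y}$ be the solutions of \eqref{E-sde} driven by the \emph{same} Wiener process $W$ and the same control $U$, with $X^{x}_{0}=x$ and $X^{y}_{0}=y$; these exist and are pathwise unique under (A1)--(A3). The difference $Z_{t}\df X^{x}_{t}-X^{y}_{t}$ then solves $\D{Z}_{t}=\Delta_{Z_{t}}b(X^{y}_{t},U_{t})\,\D{t}+\Delta_{Z_{t}}\upsigma(X^{y}_{t})\,\D{W}_{t}$. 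Using $h(z)\df(z\transp Qz)^{\nicefrac{1}{2}}$, which is smooth on $\RR^{d}\setminus\{0\}$, a direct It\^o computation shows that on $\{Z_{t}\ne0\}$ the drift of $h(Z_{t})$ equals
\begin{equation*}
\frac{1}{2h(Z_{t})}\Bigl(2\,\Delta_{Z_{t}}b\transp(X^{y}_{t},U_{t})\,QZ_{t}-\frac{\babs{\Delta_{Z_{t}}\upsigma\transp(X^{y}_{t})\,QZ_{t}}^{2}}{Z_{t}\transp QZ_{t}}+\trace\bigl(\Tilde{a}(X^{y}_{t};Z_{t})\,Q\bigr)\Bigr)\,,
\end{equation*}
which is precisely $\bigl(2h(Z_{t})\bigr)^{-1}$ times the left-hand side of the asymptotic flatness inequality, hence is bounded above by $-\tfrac{r}{2}\abs{Z_{t}}^{2}/h(Z_{t})\le-\Hat{c}_{1}\,h(Z_{t})$ with $\Hat{c}_{1}\df\tfrac{r}{2}\,\lambda_{\max}(Q)^{-1}$, using $z\transp Qz\le\lambda_{\max}(Q)\abs{z}^{2}$. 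I would then apply It\^o's formula up to the exit time of $Z$ from $\{\eps<(z\transp Qz)^{\nicefrac{1}{2}}<R\}$, where the stopped stochastic integral is a martingale (on that region $\grad h$ and $Z$ are bounded and $\upsigma$ is Lipschitz), take expectations, invoke Gr\"onwall, and let $R\to\infty$ and then $\eps\downarrow0$ --- noting that $Z$ is absorbed at the origin by pathwise uniqueness of $X^{x},X^{y}$ and that solutions do not explode by (A2) --- to obtain $\Exp^{U}\bigl[h(Z_{t})\bigr]\le h(Z_{0})\,\E^{-\Hat{c}_{1}t}$. Since $\sqrt{\lambda_{\min}(Q)}\,\abs{z}\le h(z)\le\sqrt{\lambda_{\max}(Q)}\,\abs{z}$, this gives \eqref{Ln4.3a} with $\Hat{c}_{0}\df\bigl(\lambda_{\max}(Q)/\lambda_{\min}(Q)\bigr)^{\nicefrac{1}{2}}$.

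For the second assertion the plan is to exhibit an explicit Lyapunov function. Set $f(x)\df\bigl(1+x\transp Qx\bigr)^{\nicefrac{1}{2}}$ and $\Lyap(x)\df f(x)^{q}$ for a $q>1$ to be fixed. A chain-rule computation gives
\begin{equation*}
L^{u}f(x)\;=\;\frac{1}{2f(x)}\Bigl(\trace\bigl(\upsigma(x)\upsigma\transp(x)Q\bigr)-\frac{\babs{\upsigma\transp(x)Qx}^{2}}{1+x\transp Qx}+2\,b\transp(x,u)Qx\Bigr)\,,
\end{equation*}
and the key point is that applying the asymptotic flatness inequality with base point the origin and displacement $x$ --- so that $\Delta_{x}b(0,u)=b(x,u)-b(0,u)$ and $\Delta_{x}\upsigma(0)=\upsigma(x)-\upsigma(0)$ --- lets the (possibly quadratically growing) term $\trace(\upsigma\upsigma\transp Q)$ be absorbed against the It\^o correction $-\babs{\upsigma\transp(x)Qx}^{2}/(1+x\transp Qx)$, up to a remainder that is $O(\abs{x})$ by the Lipschitz continuity of $\upsigma$, the affine growth (A2), and the boundedness of $b(0,\cdot)$ on $\Act$. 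This yields $L^{u}f(x)\le\bigl(-r\abs{x}^{2}+C\abs{x}+C\bigr)/\bigl(2f(x)\bigr)\le c_{0}-c_{1}f(x)$ for constants $c_{0},c_{1}>0$ independent of $u$. Since $a^{ij}(x)\partial_{i}f\,\partial_{j}f=\babs{\upsigma\transp(x)Qx}^{2}/\bigl(2f(x)^{2}\bigr)=O\bigl(f(x)^{2}\bigr)$ by (A2), there is a constant $\kappa>0$ with
\begin{equation*}
L^{u}\Lyap(x)\;=\;q\,f^{q-1}L^{u}f+\frac{q(q-1)}{2}\,f^{q-2}\,a^{ij}\partial_{i}f\,\partial_{j}f\;\le\;q\Bigl(-c_{1}+\frac{(q-1)\kappa}{2}\Bigr)f^{q}+C\,f^{q-1}\,,
\end{equation*}
so choosing $q\in\bigl(1,1+2c_{1}/\kappa\bigr)$ makes the leading coefficient negative, and inf-compactness of $f$ then gives $L^{u}\Lyap\le c_{0}'-c_{1}'\Lyap$ on $\RR^{d}$ for all $u$. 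Finally $\Lyap\in\Cc^{2}(\RR^{d})$, $\Lyap\ge1$, $\Lyap$ is inf-compact, and $\Lyap(x)\sim\abs{x}^{q}$, so \eqref{Ln4.3b} holds with any $p_{0}\in(1,q)$.

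The main difficulty in the first part is purely technical --- the coefficients of $Z$ degenerate at the origin, where $h$ fails to be $\Cc^{2}$, so the It\^o formula cannot be applied naively; the $(\eps,R)$-localization together with the absorption of $Z$ at $0$ circumvents this. The substantive point in the second part is the bookkeeping of the diffusion terms: because $\upsigma$ is only Lipschitz with affine growth, a direct attempt with $x\mapsto x\transp Qx$ or its powers can fail, since the Laplacian term $\trace(\upsigma\upsigma\transp Q)$ may grow like $\abs{x}^{2}$ and swamp the dissipativity $-r\abs{x}^{2}$ coming from the drift; this is exactly why one passes through $f=(1+x\transp Qx)^{\nicefrac{1}{2}}$, for which the It\^o correction supplies the needed cancellation, before raising to a power only slightly above $1$ (a larger power would reintroduce the diffusion contribution with too large a coefficient). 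Carrying these estimates out in full is the content of \cite[Lemmas~7.3.4 and~7.3.6]{ari-bor-ghosh}.
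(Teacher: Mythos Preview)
The paper does not prove this lemma; it simply quotes it from \cite[Lemmas~7.3.4 and~7.3.6]{ari-bor-ghosh}. Your sketch is correct and reproduces the standard arguments from that reference: synchronous coupling with the Lyapunov function $h(z)=(z\transp Qz)^{\nicefrac{1}{2}}$ for the contraction estimate, and $\Lyap=(1+x\transp Qx)^{q/2}$ with $q>1$ small for the geometric Lyapunov function. One minor slip: in the chain rule $L^{u}f^{q}=q\,f^{q-1}L^{u}f+q(q-1)\,f^{q-2}a^{ij}\partial_{i}f\,\partial_{j}f$ there is no factor $\tfrac{1}{2}$ (the $\tfrac{1}{2}$ is already inside $a$); this only shifts the admissible range of $q$ and does not affect the argument.
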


Let $\Lip(f)$ denote a Lipschitz constant for a function $f$,
which is assumed Lipschitz.
We often use the fact that for an asymptotically flat diffusion,
if $g_{t}(x) = \Exp_{x}^{U}[f(X_{t}]$, then
$\Lip(g_{t})\le \Hat{c}_{0}
\E^{-\Hat{c}_{1}t}\,\Lip(f)$ for all $U\in\Uadm$.

We have the following lemma.

\begin{lemma}\label{Ln4.4}
Let Assumptions~\ref{A4.1} and~\ref{A4.2} hold,
and suppose that the diffusion is asymptotically flat,
$\Hat{\eta}_{0}(\Lyap)<\infty$, and $\varphi_{0}\in\Cc^{2}(\Rd)$ is Lipschitz.
We also assume that $\mathring{r}(\cdot\,,u)$ is
Lipschitz uniformly in $u\in\Act$, and that
Assumption~\ref{A4.2}\,(ii) holds for a constant
$M_{1}$ which is independent of $R$.
Then there exists a constant $\Tilde{C}_{1}'$
which depends only on $\Hat{\eta}_{0}(\Lyap)$ and $\Lip(\varphi_{0})$
such that
\begin{align*}
\Lip\bigl(\varphi^T_t) &\;\le\; \Tilde{C}_{1}'\qquad\forall t\in[0,T]\,,~
\forall\,T>0\,.
\intertext{In particular,}
\sup_{T>0}\;\sup_{t\in[0,T]}\;
\bnorm{\overline\varphi^T_t}_{\Lyap}&\;\le\; \Tilde{C}_{1}'\,,
\end{align*}
and thus Property~\ref{P4.1} holds.
\end{lemma}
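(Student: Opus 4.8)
The plan is to obtain a bound on the Lipschitz seminorm of $\varphi^{T}_{t}$ that is uniform in $t\in[0,T]$ and $T>0$ directly from the stochastic representation \eqref{E-VISR}, exploiting the incremental-stability estimate \eqref{Ln4.3a} of asymptotic flatness, and then to convert this into the $\Cc_{\Lyap}$-bound on $\overline\varphi^{T}_{t}=\varphi^{T}_{t}-\varphi^{T}_{t}(0)$ using the superlinear growth of $\Lyap$ (Assumption~\ref{A4.2}\,(i); see also \eqref{Ln4.3b}). It is the seminorm, not $\varphi^{T}_{t}$ itself, that one can control uniformly here---$\varphi^{T}_{t}(0)$ need not be bounded in $t$ (it grows at most linearly, by Lemma~\ref{L-aux})---which is exactly why Property~\ref{P4.1} is stated for $\overline\varphi^{T}_{t}$.

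The first preliminary step is to note that all measures entering \eqref{E-VISR} stay in one fixed compact set. By \eqref{EA4.2b} applied to $\Hat{v}$ we have $\Hat{\eta}^{T}_{r}(\Lyap)\le\tfrac{c_{0}}{c_{1}}+\Hat{\eta}_{0}(\Lyap)$ for all $r\in[0,T]$ and $T>0$, so every such $\Hat{\eta}^{T}_{r}$ lies in $\cK\df\{\mu\in\calP(\Rd):\mu(\Lyap)\le\tfrac{c_{0}}{c_{1}}+\Hat{\eta}_{0}(\Lyap)\}$, which is compact in $\calP_{p}(\Rd)$. Applying Assumption~\ref{A4.2}\,(ii) to this $\cK$, with the additional hypothesis that $M_{1}$ is independent of $R$, gives $\abs{F(x,\mu)-F(x',\mu)}\le M_{1}\abs{x-x'}$ for all $x,x'\in\Rd$ and $\mu\in\cK$; combined with the assumed uniform-in-$u$ Lipschitz continuity of $\mathring{r}(\cdot\,,u)$, this shows that $x\mapsto r_{\mu}(x,u)=\mathring{r}(x,u)+F(x,\mu)$ is globally Lipschitz with constant $L_{r}\df\Lip(\mathring{r})+M_{1}$, uniformly over $u\in\Act$ and $\mu\in\cK$.

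The core step is a coupling. Fix $x,y\in\Rd$, $t\in[0,T]$ and $\eps>0$; choose $U\in\Uadm$ that is $\eps$-optimal for $\varphi^{T}_{t}(x)$ in \eqref{E-VISR} (one may take the realization of the minimizing Markov control $\Hat{v}^{T}$ along the $x$-trajectory, with $\eps=0$), and use the \emph{same} $U$ as an admissible control for the initial condition $y$; let $X^{x},X^{y}$ denote the two solutions of \eqref{E-sde} driven by a common Wiener process under $U$. Suboptimality in \eqref{E-VISR} then gives
\begin{equation*}
\varphi^{T}_{t}(y)-\varphi^{T}_{t}(x)\;\le\;\Exp^{U}\biggl[\int_{0}^{t}\bigl(r_{\Hat{\eta}^{T}_{T-t+s}}(X^{y}_{s},U_{s})-r_{\Hat{\eta}^{T}_{T-t+s}}(X^{x}_{s},U_{s})\bigr)\,\D{s}+\varphi_{0}(X^{y}_{t})-\varphi_{0}(X^{x}_{t})\biggr]+\eps\,.
\end{equation*}
Bounding the integrand by $L_{r}\abs{X^{y}_{s}-X^{x}_{s}}$, the terminal difference by $\Lip(\varphi_{0})\abs{X^{y}_{t}-X^{x}_{t}}$, and then invoking \eqref{Ln4.3a}, we obtain
\begin{equation*}
\varphi^{T}_{t}(y)-\varphi^{T}_{t}(x)\;\le\;\Hat{c}_{0}\biggl(L_{r}\int_{0}^{t}\E^{-\Hat{c}_{1}s}\,\D{s}+\Lip(\varphi_{0})\E^{-\Hat{c}_{1}t}\biggr)\abs{x-y}+\eps\;\le\;\Hat{c}_{0}\Bigl(\tfrac{L_{r}}{\Hat{c}_{1}}+\Lip(\varphi_{0})\Bigr)\abs{x-y}+\eps\,.
\end{equation*}
Letting $\eps\downarrow0$ and swapping $x$ and $y$ yields $\Lip(\varphi^{T}_{t})\le\Tilde{C}_{1}'\df\Hat{c}_{0}\bigl(\tfrac{L_{r}}{\Hat{c}_{1}}+\Lip(\varphi_{0})\bigr)$ for all $t\in[0,T]$ and $T>0$; this depends only on the fixed diffusion and cost data, on $\Hat{\eta}_{0}(\Lyap)$ (through $\cK$ and $M_{1}$) and on $\Lip(\varphi_{0})$, as claimed. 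For the last assertion, $\abs{\overline\varphi^{T}_{t}(x)}=\abs{\varphi^{T}_{t}(x)-\varphi^{T}_{t}(0)}\le\Tilde{C}_{1}'\abs{x}$, and since $\Lyap$ grows superlinearly the quantity $\sup_{x\in\Rd}\abs{x}/\Lyap(x)$ is finite; hence, enlarging $\Tilde{C}_{1}'$ if necessary, $\sup_{T>0}\sup_{t\in[0,T]}\norm{\overline\varphi^{T}_{t}}_{\Lyap}\le\Tilde{C}_{1}'$, which is Property~\ref{P4.1}.

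The main obstacle is the legitimacy of the coupling: one must re-use the control that is (near-)optimal for the $x$-problem---in its realization along $X^{x}$, i.e.\ as an element of $\Uadm$ rather than as a feedback law---for the $y$-dynamics, and then apply \eqref{Ln4.3a}, whose crucial feature is that the contraction constants $\Hat{c}_{0},\Hat{c}_{1}$ are independent of the (relaxed, non-Markov) control and of the horizon $T$. A secondary point, needed so that $L_{r}$ and hence $\Tilde{C}_{1}'$ do not blow up as $t,T$ vary, is that $\cK$ is a single fixed compact set; this is precisely the content of the a priori moment bound \eqref{EA4.2b}, a consequence of the stability hypothesis \eqref{E-Lyap} in Assumption~\ref{A4.2}.
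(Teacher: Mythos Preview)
Your proof is correct and follows essentially the same approach as the paper: both use the stochastic representation \eqref{E-VISR} together with the incremental-stability estimate \eqref{Ln4.3a} to bound $\abs{\varphi^{T}_{t}(x)-\varphi^{T}_{t}(y)}$ by $\sup_{U}\Exp^{U}\bigl[\int_{0}^{t}L_{r}\abs{X^{x}_{s}-X^{y}_{s}}\,\D{s}+\Lip(\varphi_{0})\abs{X^{x}_{t}-X^{y}_{t}}\bigr]$, yielding the same constant $\Hat{c}_{0}\bigl(\tfrac{L_{r}}{\Hat{c}_{1}}+\Lip(\varphi_{0})\bigr)$. Your $\eps$-optimal coupling is just an explicit unpacking of the inequality $\babs{\inf_{U}A(U)-\inf_{U}B(U)}\le\sup_{U}\abs{A(U)-B(U)}$ used in the paper, and your final step converting the Lipschitz bound to a $\Cc_{\Lyap}$-bound via $\sup_{x}\abs{x}/\Lyap(x)<\infty$ is exactly what the paper leaves implicit.
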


\begin{proof}
We fix some
compact set $\calK_{0}\subset\Pm_{1}(\Rd)$ of initial distributions that
contains $\Bar\mu$, and satisfies $\sup_{\mu\in\calK_{0}}\,\mu(\Lyap)<\infty$.
The initial distribution
$\Hat\eta_{0}$ is assumed to lie in the set $\calK_{0}$.
The corresponding collection
$\mathscr{P}(\calK_{0})
\df\{\Law^{U}_{\mu}(X_{t})\,\colon\,\mu\in\calK_{0}\,,\;U\in\Uadm\,,
\;t>0\}$
is compact in $\Pm_{1}(\Rd)$ by \eqref{E-Lyap} and \eqref{Ln4.3b}.
Therefore, for some constant $\Tilde{C}_{0}$, it holds that
$\Lip(F(\cdot,\mu))\le \Tilde{C}_{0}$ and
$\abs{F(x,\mu)}\le \Tilde{C}_{0}(1+\abs{x})$
for all $\mu\in\calK_{0}$.
It is straightforward to show that, under asymptotic flatness,
$\Bar{V}$ is Lipschitz.
Without loss of generality,
we let $\Tilde{C}_{0}$ be also a
Lipschitz constant for $\varphi_{0}$, $\Bar{V}$, and $\mathring r(\cdot,u)$
as well.

By Lemma~\ref{Ln4.3}, we have
\begin{align*}
\babs{\varphi^T_{t}(x)-\varphi^T_{t}(y)} &\;=\;
\babss{\inf_{U}\;\Exp^{U}_{x}
\biggl[\int_{0}^{t}r_{\Hat{\eta}^{T}_{T-t+s}}(X_{s},U_{s})\,\D{s}
+\varphi_{0}(X_{t})\biggr]\nonumber\\[5pt]
&\mspace{130mu}-\inf_{U}\;\Exp^{U}_{y}
\biggl[\int_{0}^{t}r_{\Hat{\eta}^{T}_{T-t+s}}(X_{s},U_{s})\,\D{s}
+\varphi_{0}(X_{t})\biggr]}\nonumber\\[5pt]
&\;\le\;
\sup_{U}\;\Exp^{U}
\biggl[\int_{0}^{t}\babs{r_{\Hat{\eta}^{T}_{T-t+s}}(X^{x}_{s},U_{s})
-r_{\Hat{\eta}^{T}_{T-t+s}}(X^{y}_{s},U_{s})}\,\D{s}\biggr]\nonumber\\[5pt]
&\mspace{200mu}+\sup_{U}\;\Exp^{U}
\Bigl[\babs{\varphi_{0}(X^{x}_{t})-\varphi_{0}(X^{y}_{t})}\Bigr]\nonumber\\
&\;\le\;
2\Tilde{C}_{0}\frac{\Hat{c}_{0}}{\Hat{c}_{1}}\,\abs{x-y}
+ \Hat{c}_{0}\abs{x-y}\qquad\forall\,x,y\in\Rd\,,~t\in[0,T]\,,~\text{and}~
T>0\,.\qedhere
\end{align*}
\end{proof}

\begin{remark}
Even though $\mathring r$ and $F$ have been assumed Lipschitz in $x$,
running costs with higher growth in $x$ can be treated, depending on the
 diffusion matrix.
In particular, if the diffusion matrix is constant, then \eqref{Ln4.3a}
can be replaced by
\begin{equation*}
\Exp^{U}\,\babs{X^{x}_{t} - X^{y}_{t}}^{2}\;\le\; \Hat{c}_{0}
\E^{-\Hat{c}_{1}t} \abs{x-y}^{2} \qquad
\forall x,\,y\in\RR^{d}\,.
\end{equation*}
Thus, in this case, the results of Lemma~\ref{Ln4.4}
can be extended to include running costs
with up to quadratic growth.
\end{remark}

As mentioned earlier, Property~\ref{P4.1}, which is
implied by asymptotic flatness, together with Assumption~\ref{A4.2}
are sufficient to prove convergence.
So in the statement of the main results we use Property~\ref{P4.1} in lieu
of asymptotic flatness.

Since $(\Hat{\eta}^{T}_{0}-\Bar\mu)(\varphi^T_T-\Bar V)
=(\Hat{\eta}^{T}_{0}-\Bar\mu)(\overline\varphi^T_T-\Bar V)$,
it is evident that if Property~\ref{P4.1} holds, then
the right hand side of \eqref{Ecomp1} is bounded uniformly in $T>0$.
Therefore, by \eqref{EcompX}, we have the following.

\begin{corollary}\label{Cn4.1}
Let Assumptions~\ref{A4.1}--\ref{A4.2} and Property~\ref{A4.1} hold,
and suppose $\Hat{\eta}_{0}(\Lyap)<\infty$ and
$\varphi_{0}\in\Cc^{2}(\Rd)\cap\Cc_{\Lyap}(\Rd)$.
Then there exists a constant $C_{0}$
such that
\begin{equation*}
\int_{t_{1}}^{t_{2}}\mathscr{F}(\Hat{\eta}^{T}_{T-s},\Bar{\mu})\,\D{s}
\;\le\; \varGamma_{T}(t_2)-\Gamma_{T}(t_1)
\;\le\; C_{0}\,,\qquad t_1\le t_2\,.
\end{equation*}
In particular, $t\mapsto \varGamma_{T}(t)$ is nondecreasing and bounded
on $t\in[0,T]$ uniformly in $T>0$.
\end{corollary}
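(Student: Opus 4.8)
The plan is to read the statement off from the already-established inequality \eqref{EcompX}, the monotonicity hypothesis \eqref{E-mon}, Property~\ref{P4.1}, and the uniform Lyapunov moment bound \eqref{EA4.2b}; once Property~\ref{P4.1} is available the argument is only a matter of tracking constants. The left-hand inequality in the displayed chain is precisely \eqref{EcompX}, so it suffices to exhibit a constant $C_{0}$, independent of $T$, with $\varGamma_{T}(t_{2})-\varGamma_{T}(t_{1})\le C_{0}$ for all $0\le t_{1}\le t_{2}\le T$, and then to record that $\varGamma_{T}$ is nondecreasing.

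First I would bound $\varGamma_{T}(t)$ uniformly in $t\in[0,T]$ and $T>0$. Since $\hat\eta^{T}_{T-t}-\bar\mu$ is a signed measure of total mass zero, one may subtract the constant $\varphi^{T}_{t}(0)$ from the integrand and write
\begin{equation*}
\varGamma_{T}(t)\;=\;\bigl(\hat\eta^{T}_{T-t}-\bar\mu\bigr)\bigl(\varphi^{T}_{t}-\bar V\bigr)\;=\;\bigl(\hat\eta^{T}_{T-t}-\bar\mu\bigr)\bigl(\overline\varphi^{T}_{t}-\bar V\bigr)\,.
\end{equation*}
By Property~\ref{P4.1}, $\bnorm{\overline\varphi^{T}_{t}}_{\Lyap}\le\Tilde C_{1}$ uniformly in $t$ and $T$, and since $\bar V\in\Cc_{\Lyap}(\Rd)$ this gives $\bnorm{\overline\varphi^{T}_{t}-\bar V}_{\Lyap}\le\Tilde C_{1}+\bnorm{\bar V}_{\Lyap}$, whence
\begin{equation*}
\babs{\varGamma_{T}(t)}\;\le\;\bigl(\Tilde C_{1}+\bnorm{\bar V}_{\Lyap}\bigr)\bigl(\hat\eta^{T}_{T-t}(\Lyap)+\bar\mu(\Lyap)\bigr)\,.
\end{equation*}
Here $\bar\mu\in\eH\subset\mathscr{M}_{0}$ gives $\bar\mu(\Lyap)\le c_{0}/c_{1}$, while \eqref{EA4.2b} applied with $\Law(X_{0})=\hat\eta_{0}$ gives $\hat\eta^{T}_{T-t}(\Lyap)\le c_{0}/c_{1}+\hat\eta_{0}(\Lyap)$ for all $t\in[0,T]$ and $T>0$. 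Since $\hat\eta_{0}(\Lyap)<\infty$ and $\varphi_{0}\in\Cc_{\Lyap}(\Rd)$ by hypothesis, this produces a constant $C$ depending only on $\hat\eta_{0}(\Lyap)$ and $\bnorm{\varphi_{0}}_{\Lyap}$ with $\babs{\varGamma_{T}(t)}\le C$ for all $t\in[0,T]$ and $T>0$; consequently $\varGamma_{T}(t_{2})-\varGamma_{T}(t_{1})\le\babs{\varGamma_{T}(t_{2})}+\babs{\varGamma_{T}(t_{1})}\le 2C=:C_{0}$.

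It remains to record the monotonicity. By \eqref{E-mon} we have $\mathscr{F}(\hat\eta^{T}_{T-s},\bar\mu)\ge0$ for every $s$, so the left-hand side of \eqref{EcompX} is nonnegative, forcing $\varGamma_{T}(t_{1})\le\varGamma_{T}(t_{2})$ whenever $t_{1}\le t_{2}$; hence $t\mapsto\varGamma_{T}(t)$ is nondecreasing on $[0,T]$, and the bound just established shows it is bounded on $[0,T]$ uniformly in $T>0$. The substantive input here is Property~\ref{P4.1}: without a uniform-in-$T$ bound on $\overline\varphi^{T}_{t}$ there is no control of the first moment-type term, and the only thing to watch is that every constant introduced depends on the data solely through $\hat\eta_{0}(\Lyap)$ and $\bnorm{\varphi_{0}}_{\Lyap}$ (the dependence on $\bar V$ being through the fixed data of the problem). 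I note also that the same computation identifies $\varGamma_{T}(T)-\varGamma_{T}(0)$ with the right-hand side of \eqref{Ecomp1}, so one could equivalently derive $C_{0}$ by bounding that expression term by term.
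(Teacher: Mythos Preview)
Your proof is correct and follows essentially the same approach as the paper: the paper observes just before the corollary that $(\hat\eta^{T}_{0}-\bar\mu)(\varphi^{T}_{T}-\bar V)=(\hat\eta^{T}_{0}-\bar\mu)(\overline\varphi^{T}_{T}-\bar V)$ and that Property~\ref{P4.1} then bounds the right-hand side of \eqref{Ecomp1} uniformly in $T$, from which the corollary follows via \eqref{EcompX} and the nonnegativity of $\mathscr{F}$. You have simply written this out more explicitly by bounding $\abs{\varGamma_{T}(t)}$ for every $t$ rather than only at the endpoints, which is the same computation.
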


We are now ready to state the main results.

\begin{theorem}\label{Tn4.3}
Let Assumptions~\ref{A4.1}--\ref{A4.2} and Property~\ref{A4.1} hold,
and suppose $\Hat{\eta}_{0}(\Lyap)<\infty$ and
$\varphi_{0}\in\Cc^{2}(\Rd)\cap\Cc_{\Lyap}(\Rd)$.
Then for any $\lambda\in(0,1)$, and $t_{0}>0$ we have
\begin{equation}\label{Tn4.3aA}
\sup_{t\in[0,t_{0}]}\;
\eD_{1}(\Hat{\eta}^{T}_{\lambda T - t},\Bar\mu)\;\xrightarrow[T\to\infty]{}\;0\,.
\end{equation}
Moreover,
\begin{equation}\label{Tn4.3a}
\sup_{t\in[0,t_{0}]}\;
\bnorm{\varphi^T_{\lambda T-t} - \varphi^T_{\lambda T-t}(0)-\Bar{V}}_{\Lyap}
\;\xrightarrow[T\to\infty]{}\;0\,,
\end{equation}
and
\begin{equation}\label{Tn4.3b}
\sup_{t\in[0,t_{0}]}\,\babs{\varphi^T_{\lambda T}(0)-\varphi^T_{\lambda T-t}(0)}
\;\xrightarrow[T\to\infty]{}\;0\,.
\end{equation}
\end{theorem}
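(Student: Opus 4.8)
The plan is to establish the two assertions \eqref{Tn4.3aA}--\eqref{Tn4.3a} in the stronger \emph{two--sided} form
\[
\sup_{\lambda\in[\lambda_{0},1-\lambda_{0}]}\;\sup_{\abs{s}\le t_{0}}\;
\Bigl(\eD_{1}\bigl(\Hat\eta^{T}_{\lambda T+s},\Bar\mu\bigr)
+\bnorm{\overline\varphi^{T}_{\lambda T+s}-\Bar V}_{\Lyap}\Bigr)
\;\xrightarrow[T\to\infty]{}\;0
\qquad\forall\,\lambda_{0}\in(0,\tfrac12),\ t_{0}>0,
\]
which contains the stated one--sided claims, and then to read off \eqref{Tn4.3b} from the dynamic programming principle. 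I would argue the display by contradiction: if it fails there are $T_{n}\to\infty$, $\lambda_{n}\in[\lambda_{0},1-\lambda_{0}]$, $\abs{s_{n}}\le t_{0}$ and $\varepsilon>0$ witnessing its failure; putting $t_{n}\df\lambda_{n}T_{n}+s_{n}$ and passing to a subsequence, $t_{n}/T_{n}\to\lambda_{*}\in[\lambda_{0},1-\lambda_{0}]$. Throughout I work in the common time variable of the system \eqref{HJB1a}--\eqref{HJB1b}, in which the HJB runs forward and the Fokker--Planck equation backward; note that $\varGamma_{T}$ in Corollary~\ref{Cn4.1} is already written in that variable.

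\emph{Compactness.} I use the relative value iteration to get uniform bounds. By \eqref{virvi} and Lemma~\ref{L-aux}, $\psi^{T}_{t}(0)$ is bounded uniformly in $T>0$, $t\in[0,T]$, so by \eqref{viloc} and Property~\ref{P4.1} the functions $\psi^{T}_{t}=\overline\varphi^{T}_{t}+\psi^{T}_{t}(0)$ are bounded in $\Cc_{\Lyap}(\Rd)$ uniformly; since $H(x,\cdot\,)$ is locally Lipschitz and $a$ is locally uniformly elliptic, interior parabolic estimates for \eqref{RVI} give uniform $\Sobl^{1,2,q}_{\mathrm{loc}}$ bounds on $[\delta,T]\times B_{R}$, hence local equicontinuity of $\psi^{T}_{t}$ and $\nabla\psi^{T}_{t}$. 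Meanwhile \eqref{EA4.2b} gives uniform $\Lyap$--moment bounds on $\Hat\eta^{T}_{t}$, hence tightness in $\Pm_{1}(\Rd)$, and Lemma~\ref{L4.1} gives uniform $\eD_{1}$--equicontinuity in time. Passing to a further subsequence, for $\tau=\lambda_{*}$ (and likewise for any other scaled time) there are limits $\overline\varphi^{T_{n}}_{\tau T_{n}+\cdot}\to\overline\varphi^{\tau}_{\infty}$ in $\Cc^{1}_{\mathrm{loc}}$ in $x$ together with $\Sobl^{1,2,q}_{\mathrm{loc}}$--weakly, and $\Hat\eta^{T_{n}}_{\tau T_{n}+\cdot}\to\theta^{\tau}_{\cdot}$ locally uniformly in $\Pm_{1}(\Rd)$. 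Passing to the limit in \eqref{VI} and in the Fokker--Planck equation for $\Hat\eta^{T}$ --- using Assumption~\ref{A4.1}(iv) for convergence of the optimal Markov controls to $v^{\tau}_{s}\df$ the unique minimizer of $H(x,\nabla\overline\varphi^{\tau}_{\infty}(s,\cdot))$, and Assumption~\ref{A4.2}(iii) for the source --- one checks that $(\overline\varphi^{\tau}_{\infty},\theta^{\tau})$ is an ``eternal'' solution of the coupled system on $\RR$: $\overline\varphi^{\tau}_{\infty}$ solves the normalised HJB with source $F(\cdot\,,\theta^{\tau}_{s})$, $\overline\varphi^{\tau}_{\infty}(s,0)=0$, and $\frac{\D}{\D s}\theta^{\tau}_{s}(f)=-\theta^{\tau}_{s}(L^{v^{\tau}_{s}}f)$ for all $f\in\Cc^{2}_{c}(\Rd)$; moreover $\theta^{\tau}_{s}(\Lyap)$ is bounded uniformly in $s$.

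\emph{Identification of the limit} (the main obstacle). By Corollary~\ref{Cn4.1} each $t\mapsto\varGamma_{T}(t)$ is nondecreasing and bounded by a constant $C_{1}$ independent of $T$, and from \eqref{EcompX} the limits $\gamma^{\tau}(s)\df(\theta^{\tau}_{s}-\Bar\mu)\bigl(\overline\varphi^{\tau}_{\infty}(s,\cdot)-\Bar V\bigr)=\lim_{n}\varGamma_{T_{n}}(\tau T_{n}+s)$ are nondecreasing, bounded, and satisfy $\int_{s_{1}}^{s_{2}}\mathscr{F}(\theta^{\tau}_{s},\Bar\mu)\,\D s\le\gamma^{\tau}(s_{2})-\gamma^{\tau}(s_{1})$. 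Rescaling the nonnegative increments $\D\varGamma_{T_{n}}$ by the map $t\mapsto t/T_{n}$ produces measures on $[0,1]$ of mass $\le 2C_{1}$; extracting a weak limit (with at most countably many atoms), one obtains that for every non--atom $\tau\in(0,1)$ the increment of $\varGamma_{T_{n}}$ over any bounded window around $\tau T_{n}$ tends to $0$, i.e.\ $\gamma^{\tau}$ is \emph{constant}; hence this holds for a dense set of $\tau$, and (since $\lambda_{*}\in[\lambda_{0},1-\lambda_{0}]$) we may pick such good $\tau_{1}<\lambda_{*}<\tau_{2}$ in $(\lambda_{0}/2,1-\lambda_{0}/2)$ and extract, along a common subsequence, the eternal solutions $(\overline\varphi^{\tau_{j}}_{\infty},\theta^{\tau_{j}})$ as well as the limits $(\overline\varphi_{\infty},\theta)$ at the original sequence $t_{n}$. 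For a good $\tau$, constancy of $\gamma^{\tau}$ forces $\int_{\RR}\mathscr{F}(\theta^{\tau}_{s},\Bar\mu)\,\D s=0$; passing to the eternal limit in the identities \eqref{EcompA}--\eqref{EcompD} underlying \eqref{Ecomp1}, the quantity $\gamma^{\tau}(s_{2})-\gamma^{\tau}(s_{1})$ dominates that integral plus two further nonnegative terms, one of which vanishes exactly when $v^{\tau}_{s}$ is a pointwise minimizer of the ergodic Hamiltonian $\theta^{\tau}_{s}$--a.e.; constancy therefore forces $v^{\tau}_{s}=\Bar v$ $\theta^{\tau}_{s}$--a.e.\ for a.e.\ $s$, so $\frac{\D}{\D s}\theta^{\tau}_{s}(f)=-\theta^{\tau}_{s}(L^{\Bar v}f)$, i.e.\ $\theta^{\tau}$ is a tight eternal solution of the Fokker--Planck flow of the $\Lyap$--geometrically ergodic diffusion under $\Bar v$; by \eqref{E-geom} this forces $\theta^{\tau}_{s}\equiv\Bar\mu$, and then $\gamma^{\tau}\equiv0$ and, since the source of $\overline\varphi^{\tau}_{\infty}$ is now the stationary $F(\cdot\,,\Bar\mu)$, uniqueness of bounded eternal normalised solutions (equivalently, convergence of the relative value iteration, cf.\ \cite{RVI}) gives $\overline\varphi^{\tau}_{\infty}(s,\cdot)\equiv\Bar V$. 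Applying this to $\tau_{1},\tau_{2}$ and using monotonicity of $\varGamma_{T_{n}}$ (so $\gamma^{\tau_{1}}\le\gamma\le\gamma^{\tau_{2}}$ by $\tau_{1}<\lambda_{*}<\tau_{2}$), we get $\gamma\equiv0$, hence $\int_{\RR}\mathscr{F}(\theta_{s},\Bar\mu)\,\D s=0$, hence $\theta_{s}\equiv\Bar\mu$ and $\overline\varphi_{\infty}\equiv\Bar V$, contradicting the choice of $(T_{n},t_{n})$. This proves the two--sided forms of \eqref{Tn4.3aA} and \eqref{Tn4.3a}.

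\emph{Proof of \eqref{Tn4.3b}.} Apply the dynamic programming principle for \eqref{VI} between $\lambda T-\tau$ and $\lambda T$ at $x=0$:
\[
\varphi^{T}_{\lambda T}(0)-\varphi^{T}_{\lambda T-\tau}(0)
=\Exp^{\Hat{v}^{T}}_{0}\biggl[\int_{0}^{\tau}
r_{\Hat\eta^{T}_{(1-\lambda)T+s}}\bigl(X_{s},\Hat{v}^{T}_{(1-\lambda)T+s}(X_{s})\bigr)\,\D s
+\overline\varphi^{T}_{\lambda T-\tau}(X_{\tau})\biggr]-\Bar\varrho\,\tau .
\]
As $T\to\infty$, the controls $\Hat{v}^{T}_{(1-\lambda)T+s}$ --- being minimizers of $H(x,\nabla\overline\varphi^{T}_{\lambda T-s})$ --- converge to $\Bar v$ by the two--sided form of \eqref{Tn4.3a} and Assumption~\ref{A4.1}(iv); $\Hat\eta^{T}_{(1-\lambda)T+s}\to\Bar\mu$ and $\overline\varphi^{T}_{\lambda T-\tau}\to\Bar V$ uniformly by the two--sided forms just proved; and by stability of \eqref{E-sde} under convergence of Markov controls together with the uniform integrability furnished by \eqref{EA4.2b} and Assumption~\ref{A4.2}(i), the right-hand side converges, uniformly over $\tau\in[0,t_{0}]$, to $\Exp^{\Bar v}_{0}\bigl[\int_{0}^{\tau}r_{\Bar\mu}(X_{s},\Bar v(X_{s}))\,\D s+\Bar V(X_{\tau})\bigr]-\Bar\varrho\,\tau$, which by It\^o's formula applied to $\Bar V$ and the stationary equation \eqref{T4.2a} equals $\Bar V(0)=0$. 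Uniformity over $t\in[0,t_{0}]$ in all three assertions is contained in the two--sided estimates and the equicontinuity in Lemmas~\ref{L4.1} and~\ref{L-aux}.
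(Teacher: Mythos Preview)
Your argument is correct and takes a genuinely different route from the paper's. The paper works on finite windows $[0,2\tau]$: a pigeonhole on a partition of $[(1-\varepsilon)T,T]$ locates an interval with small $\varGamma_{T}$-increment; on that interval the limit $(\varphi^{*},\hat\eta^{*})$ is a finite-horizon MFG solution, the suboptimality inequalities \eqref{Tn4.3c}--\eqref{Tn4.3d} are forced to equalities (via strict positivity of the densities), hence $\hat v^{*}=\bar v$, and then explicit $\tau$-dependent estimates \eqref{Tn4.3g}--\eqref{Tn4.3jE} show $\overline\varphi^{*}\approx\bar V$ and $\hat\eta^{*}\approx\bar\mu$ on the middle of the window as $\tau\to\infty$; repeating near $t=0$ and using monotonicity gives $\varGamma_{T}(\lambda T)\to0$ uniformly on $[\varepsilon,1-\varepsilon]$. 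You instead pass directly to \emph{eternal} solutions on $\RR$: rescaling $\D\varGamma_{T_{n}}$ to $[0,1]$ and taking a weak limit singles out co-countably many $\tau$ at which the limiting $\gamma^{\tau}$ is constant; constancy forces $v^{\tau}=\bar v$, then $\theta^{\tau}\equiv\bar\mu$ (a bounded eternal orbit of a $\Lyap$-geometrically ergodic flow must be the invariant measure), then $\overline\varphi^{\tau}_{\infty}\equiv\bar V$; two good anchors $\tau_{1}<\lambda_{*}<\tau_{2}$ then sandwich $\gamma\equiv0$ at $\lambda_{*}$. Your proof of \eqref{Tn4.3b} via convergence of the optimal controls also differs from the paper's direct computation under the fixed control $\bar v$. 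The paper's route yields explicit finite-$\tau$ estimates that could in principle be turned into rates; yours is conceptually cleaner and isolates two Liouville-type rigidity statements as the mechanism. Two minor points of care: keep straight whether $\theta^{\tau}_{s}$ denotes the limit of $\hat\eta^{T_{n}}_{T_{n}-\tau T_{n}-s}$ (which is what appears in $\varGamma_{T_{n}}(\tau T_{n}+s)$) or of $\hat\eta^{T_{n}}_{\tau T_{n}+s}$, since your prose and your formula for $\gamma^{\tau}$ are not consistent on this; and for the eternal VI rigidity the cleanest argument, already implicit in your setup, is that once $v^{\tau}=\bar v$ the difference $w=\overline\varphi^{\tau}_{\infty}-\bar V$ satisfies $w(t,x)=\Exp^{\bar v}_{x}[w(s,X_{t-s})]-\Exp^{\bar v}_{0}[w(s,X_{t-s})]$, which vanishes as $s\to-\infty$ by \eqref{E-geom} and the uniform $\Cc_{\Lyap}$ bound --- this avoids the slightly indirect appeal to \cite{RVI}.
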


\begin{proof}
Let $\varepsilon\in\bigl(0,\frac{1}{2}\min(\lambda,1-\lambda)\bigr)$, and
$\tau>0$.
Consider the interval $[(1-\varepsilon) T-\tau,T]$ and let
$\mathcal{I}_{T}$ be the collection of consecutive closed intervals
$[(1-\varepsilon) T,(1-\varepsilon) T+2\tau]$,
$[(1-\varepsilon) T+2\tau,(1-\varepsilon) T+4\tau],\dotsc$ contained in it.
Let $T_n\to\infty$ be any sequence.
By Corollary~\ref{Cn4.1} there exists a sequence
$[t_n-\tau,t_n+\tau]\in \mathcal{I}_{T_n}$
such that $\varGamma_{T_n}(t_n+\tau)-\varGamma_{T_n}(t_n-\tau)\to 0$ as $n\to\infty$.

Property~\ref{P4.1} together with Lemma~\ref{L-aux} imply that
$(s,x)\mapsto\varphi^{T}_{t+s}(x)-\varphi^{T}_{t}(0)$ is bounded on
compact sets of $\RR_{+}\times \Rd$ uniformly in $T>0$ and $t\in[0,T]$.
By well known interior estimates of parabolic equations, this means
that the maps $(s,x)\mapsto\varphi^{T}_{t+s}(x)-\varphi^{T}_{t}(0)$ are
locally H\"older equicontinuous on
compact sets of $(1,\infty)\times\Rd$.
Therefore,
$(t,x)\mapsto\varphi^{T_{n}}_{t}(x)-\varphi^{T_{n}}_{t_{n}}(0)$ is equicontinuous
on $[t_n - \tau, t_{n}+\tau]$.
At the same time, by Lemma~\ref{L4.1}, the laws
$\{\Hat{\eta}^{T_n}_{T_n-t_n+s}\,,\; s\in[-\tau,\tau]\}$
are precompact in $[-\tau,\tau]\times\Pm_{1}(\Rd)$.
Passing to the limit along a subsequence, also denoted as $\{T_n\}$,
we define
\begin{equation}\label{Tn4.3BB}
\varphi^{*}_{t}\;\df\;\lim_{n\to\infty}\;
\bigl(\varphi^{T_{n}}_{t_{n}-\tau+t}-\varphi^{T_{n}}_{t_{n}}(0)\bigr)\,,
\quad\text{and}\quad
\Hat\eta^{*}_{t}\;\df\;\lim_{n\to\infty}\;\Hat\eta^{T_{n}}_{T_{n} -t_{n}-\tau+t}\,,
\qquad t\in[0,2\tau]\,.
\end{equation}
Let $\overline\varphi^{*}_{t}\df\varphi^{*}_{t}-\varphi^{*}_{t}(0)$.
By Property~\ref{P4.1} and Lemma~\ref{L-aux} we have
\begin{equation}\label{Tn4.3BC}
\sup_{t\,\in\,[0,2\tau]}\,\norm{\varphi^{*}_{t}}_\Lyap\;<\;
\Tilde{C}_{1}+\Bar{C}_{0}(1+\tau)\,,\quad\text{and}\quad
\sup_{t\,\in\,[0,2\tau]}\,\norm{\overline\varphi^{*}_{t}}_\Lyap\;<\;
\Tilde{C}_{1}\,.
\end{equation}
It is evident that $\{\Hat\eta^{*}_{t}\,,\;t\in[0,2\tau]\}$ is a MFG solution
for the finite horizon problem on $[0,2\tau]$
with initial law $\Hat\eta^{*}_{0}$ and terminal cost $\varphi^{*}_{0}$.
Therefore,
\begin{equation*}
\varphi^*_{t}(x)
\;=\;\inf_{U\in\Uadm}\;\Exp^{U}_{x}
\biggl[\int_{0}^{t}r_{\Hat{\eta}^{*}_{s}}\bigl(X_{s}, U_s\bigr)\,\D{s}
+\varphi^*_{0}(X_{t})\biggr] - \bar{\varrho}\,t\,.
\end{equation*}
Let $\Hat{v}^{*}$ be a Markov control that realizes this infimum, i.e.,
$\Hat{v}^{*}$ is the a.e.\ unique minimizer from the Hamiltonian of
the associated HJB.
By suboptimality we have
\begin{subequations}
\begin{align}
\varphi^*_{t}(x)
&\;\le\;\Exp^{\Bar{v}}_{x}
\biggl[\int_{0}^{t}r_{\Hat{\eta}^{*}_{s}}\bigl(X_{s},\Bar{v}(X_s)\bigr)\,\D{s}
+\varphi^*_{0}(X_{t})\biggr] - \bar{\varrho}\, t\label{Tn4.3c}\,,\\[5pt]
\Bar{V}(x)
&\;\le\;\Exp^{\Hat{v}^{*}}_{x}
\biggl[\int_{0}^{t}r_{\Bar{\mu}}\bigl(X_{s},\Hat{v}^{*}_{s}(X_s)\bigr)\,\D{s}
+\Bar{V}(X_{t})\biggr]- \bar{\varrho}\, t\label{Tn4.3d}
\end{align}
\end{subequations}
for $t\in[0,\tau]$.
Since $\varGamma_{T_n}(t_n+\tau)-\varGamma_{T_n}(t_n-\tau)\to 0$
as $n\to\infty$ along the subsequence,
taking limits we have
\begin{equation}\label{Tn4.3e}
(\Hat{\eta}^{*}_{0}-\Bar\mu)(\varphi^*_{2\tau}-\Bar V)
- (\Hat{\eta}^{*}_{2\tau}-\Bar\mu)(\varphi^*_0-\Bar V) \;=\; 0\,.
\end{equation}
Thus
\begin{equation}\label{Tn4.3f}
\int_{0}^{2\tau}\mathscr{F}(\Hat{\eta}^{*}_{s},\Bar{\mu})\,\D{s}\;=\;0
\end{equation}
by \eqref{Ecomp1}.
However, since $\Bar\mu$ and $\Hat{\eta}^{*}_{0}$ have strictly positive density,
then \eqref{Tn4.3e}--\eqref{Tn4.3f}  imply
that \eqref{Tn4.3c} and \eqref{Tn4.3d} must hold with equality.
By a.e.\ uniqueness of the minimizer in the Hamiltonian,
we must have $\Hat{v}^{*}=\Bar{v}$ a.e.\ in $[0,2\tau]\times\Rd$.
Recall that $P_{t}^{\Bar{v}}(x,\cdot\,)$ denotes the transition probability
of the process $X$ in \eqref{E-sde} under the control $\Bar{v}$.
Thus, by \eqref{E-geom} we have
\begin{equation*}
\Hat\eta^*_{t}(\cdot)\;=\;
\int_{\Rd}\Hat\eta^*_0(\D{y})\,P_{t}^{\Bar{v}}(y,\cdot\,)\,,
\quad t\in[0,2\tau]\,,
\end{equation*}
and using \eqref{E-geom} and \eqref{Tn4.3BC} we obtain
\begin{align}\label{Tn4.3g}
\babs{(\Hat{\eta}^{*}_{t}-\Bar\mu)(\varphi^*_0-\Bar V)}
&\;\le\; M_{0}\,\E^{-\gamma t}
\bnorm{\overline\varphi^*_0-\Bar V}_{\Lyap}
\bigl(1+\Hat{\eta}^{*}_{0}(\Lyap)\bigr)\nonumber\\[5pt]
&\;\le\; M_{0}\bigl(\Tilde{C}_{1}+
\norm{\Bar{V}}_{\Lyap}\bigr)\,\bigl(
1+\tfrac{c_{0}}{c_{1}}+\Hat{\eta}_{0}(\Lyap)\bigr)\,
\E^{-\gamma t}
\,,\quad t\in[0,2\tau]\,.
\end{align}
Note also that by the Kantorovich duality theorem,
we have the estimate
\begin{equation}\label{Tn4.3h}
\eD_{1}(\Hat\eta^*_{t},\Bar\mu)
\;\le\;
M_{0}\,\E^{-\gamma t}\bigl(1+\Hat{\eta}^{*}_{0}(\Lyap)\bigr)\,
\sup_{x\in\Rd}\;\tfrac{\abs{x}}{\Lyap(x)}\,,\quad t\in[0,2\tau]\,.
\end{equation}

We claim that
$\Hat{v}^{*}=\Bar{v}$ a.e.\ in $[0,2\tau]\times\Rd$ also implies that
\begin{align}
\sup_{t\,\in\,\bigl[\frac{\tau}{4},\frac{3\tau}{2}\bigr]}\;
\bnorm{\overline\varphi^*_{t}-\Bar{V}}_{\Lyap}
&\;\xrightarrow[\tau\to\infty]{}\;0\,,\label{Tn4.3ia}\\
\intertext{and}
\sup_{t\,\in\,\bigl[\frac{\tau}{2},\frac{3\tau}{2}\bigr]}\;
\babs{\varphi^*_{t}(0)-\varphi^*_{\tau}(0)}
&\;\xrightarrow[\tau\to\infty]{}\;0\label{Tn4.3ib}\,.
\end{align}
To prove the claim,  we estimate $\varphi^*_{t}$ by
\begin{align}\label{Tn4.3j}
\varphi^*_{t}(x) - \varphi^*_{0}(0)
&\;=\;\Exp^{\Bar{v}}_{x} \biggl[\int_{0}^{t}
r_{\Hat{\eta}^{*}_{2\tau-t+s}}\bigl(X_{s},\Bar{v}(X_s)\bigr)\,\D{s}
+\overline\varphi^*_{0}(X_{t})\biggr]
- \bar{\varrho}\,t\nonumber\\[5pt]
&\;=\;\Exp^{\Bar{v}}_{x}
\biggl[\int_{0}^{t}r_{\Bar\mu}\bigl(X_{s},\Bar{v}(X_s)\bigr)\,\D{s}
+\Bar{V}(X_{t}) - \Bar{\varrho}\,t\biggr]
\nonumber\\[5pt]
&\mspace{50mu}+\Exp^{\Bar{v}}_{x}\biggl[\int_{0}^{t}
\Bigl(F\bigl(X_{s},\Hat{\eta}^{*}_{2\tau-t+s}\bigr)
-F\bigl(X_{s},\Bar{\mu}\bigr)\Bigr)\,\D{s}
+\overline\varphi^*_{0}(X_{t}) - \Bar{V}(X_{t})\biggr]\,.
\end{align}
The first term in \eqref{Tn4.3j} equals $\Bar{V}(x)$.
We use the estimate
\begin{equation}\label{Tn4.3jA}
\babs{\Exp^{\Bar{v}}_{x} \bigl[\overline\varphi^*_{0}(X_{t}) -\Bar{V}(X_{t})\bigr]
- \Bar{\mu}\bigl(\overline\varphi^*_{0} -\Bar{V}\bigr)}
\;\le\;
M_{0} \E^{-\gamma t}
\bnorm{\overline\varphi^*_{0}-\Bar{V}}_{\Lyap} \bigl(1+\Lyap(x)\bigr)\,,
\end{equation}
which holds by \eqref{E-geom}.
Similarly, with $\Tilde{F}_{\mu}(x)\df
F(x,\mu)-F(x,\Bar\mu)$, we have
\begin{equation}\label{Tn4.3jB}
\babss{\,\Exp^{\Bar{v}}_{x}\biggl[\int_{0}^{t}
\Tilde{F}_{\Hat{\eta}^{*}_{2\tau-t+s}}(X_{s})\,\D{s}\biggr]
- \int_{0}^{t}
\Bar\mu\bigl(\Tilde{F}_{\Hat{\eta}^{*}_{2\tau-t+s}}\bigr)\,\D{s}}
\;\le\;
 M_{0} \bigl(1+\Lyap(x)\bigr)\int_{0}^{t}\E^{-\gamma s}
\bnorm{\Tilde{F}_{\Hat{\eta}^{*}_{2\tau-t+s}}}_{\Lyap} \,\D{s}\,.
\end{equation}
Let
\begin{equation*}
\zeta(t)\;\df\;
\Bar{\mu}\bigl(\overline\varphi^*_{0} -\Bar{V}\bigr)
+ \int_{0}^{t}
\Bar\mu\bigl(\Tilde{F}_{\Hat{\eta}^{*}_{2\tau-t+s}}\bigr)\,\D{s}\,.
\end{equation*}
We evaluate $\varphi^*_{t}(x) - \varphi^*_{0}(0)$ in
\eqref{Tn4.3j} first at $x$ and then at $x=0$,
using also \eqref{Tn4.3jA}--\eqref{Tn4.3jB}
to form a triangle inequality,
as well as the fact that $\Bar{V}(0)=0$,
to obtain
\begin{align}\label{Tn4.3jC}
\babs{\overline\varphi^*_{t}(x)-\Bar{V}(x)}
&\;\le\; \babs{\varphi^*_{t}(x)-\varphi^{*}_{0}(0)-\Bar{V}(x)
-\zeta(t)}
+ \babs{\varphi^*_{t}(0)-\varphi^{*}_{0}(0)-\Bar{V}(0)-\zeta(t)}\nonumber\\[5pt]
&\;\le\;
M_{0} \bigl(2+\Lyap(x)+\Lyap(0)\bigr)\,
\biggl(\E^{-\gamma t }\bnorm{\overline\varphi^*_{0}-\Bar{V}}_{\Lyap}
+ \int_{0}^{t}\E^{-\gamma s}
\bnorm{\Tilde{F}_{\Hat{\eta}^{*}_{2\tau-t+s}}}_{\Lyap} \,\D{s}\biggr)\,.
\end{align}
By Assumption~\ref{A4.2}\,(iii), which holds with $p=1$, and
\eqref{Tn4.3h} we have
\begin{equation}\label{Tn4.3jD}
\sup_{t\,\in\,[\nicefrac{\tau}{2}, 2\tau]}\;
\bnorm{\Tilde{F}_{\Hat{\eta}^{*}_{t}}}_{\Lyap}\;\xrightarrow[\tau\to\infty]{}\;0\,.
\end{equation}
Therefore \eqref{Tn4.3ia} follows by \eqref{Tn4.3jC}--\eqref{Tn4.3jD}.
Using \eqref{Tn4.3j} once more, we obtain
\begin{multline}\label{Tn4.3jE}
\babs{\varphi^*_{t}(0)-\varphi^*_{\nicefrac{\tau}{4}}(0)}
\;\le\;
\Bar{\mu}\bigl(\overline\varphi^*_{\nicefrac{\tau}{4}} -\Bar{V}\bigr)
+\int_{0}^{t-\nicefrac{\tau}{4}}
\Bar\mu\bigl(\Tilde{F}_{\Hat{\eta}^{*}_{2\tau-t+s}}\bigr)\,\D{s}\\
+ M_{0} \bigl(1+\Lyap(0)\bigr)\,
\biggl(\E^{-\gamma (t-\nicefrac{\tau}{4}) }
\bnorm{\overline\varphi^*_{\nicefrac{\tau}{4}}-\Bar{V}}_{\Lyap}
+ \int_{0}^{t-\nicefrac{\tau}{4}}\E^{-\gamma s}
\bnorm{\Tilde{F}_{\Hat{\eta}^{*}_{2\tau-t+s}}}_{\Lyap} \,\D{s}\biggr)
\end{multline}
for $t\ge\nicefrac{\tau}{4}$.
The first term in \eqref{Tn4.3jE} vanishes as $\tau\to\infty$ by \eqref{Tn4.3ia},
and the same holds for the integrals by \eqref{Tn4.3jD}.
This proves \eqref{Tn4.3ib}.

Repeating the same argument on the interval $[0,\varepsilon T]$,
we obtain the analogous to \eqref{Tn4.3g}.
Combining the two, and
using the fact that
$\varGamma_{T}((1-\varepsilon) T)-\varGamma_{T}(\varepsilon T)\to 0$
as $T\to\infty$, and
$\varGamma_{T}(t)-\varGamma_{T}(t')\ge0$
for $t\ge t'$,
we deduce that
\begin{equation}\label{Tn4.3k}
\sup_{\lambda\,\in\,[\varepsilon,1-\varepsilon]}\;\varGamma_{T}(\lambda T)
\;\xrightarrow[T\to\infty]{}\; 0\,.
\end{equation}

Let $\Tilde\varepsilon>0$ be given.
By \eqref{Tn4.3g} and \eqref{Tn4.3jC}--\eqref{Tn4.3jD}
we can select $\tau_{0}$ such that, if $\tau>\tau_{0}$ then any
limits $\varphi^{*}$ and $\Hat{\eta}^{*}$ as defined in \eqref{Tn4.3BB}
satisfy
\begin{equation}\label{Tn4.3l}
\sup_{t\,\in\,\bigl[\frac{\tau}{2},\frac{3\tau}{2}\bigr]}\;
\max\;\Bigl(\eD_{1}(\Hat\eta^*_{t},\Bar\mu),
\babs{\varphi^*_{t}(0)-\varphi^*_{\tau}(0)},
\bnorm{\overline\varphi^*_{t}-\Bar{V}}_{\Lyap}\Bigr)
\;\le\; \frac{\Tilde\varepsilon}{4}\,.
\end{equation}
Next, we select any interval of the form
$[\lambda T-\tau,\lambda T+\tau]\subset[\varepsilon T,(1-\varepsilon)T]$,
$\tau>\tau_{0}$.
Given any sequence $T_{n}\to0$,
we can take limits along some subsequence $T_{n}\to\infty$
by \eqref{Tn4.3k}, as done earlier,
and define
\begin{equation*}
\varphi^{*}_{t}\;\df\;\lim_{n\to\infty}\;
\bigl(\varphi^{T_{n}}_{\lambda T_{n}-\tau+t}-\varphi^{T_{n}}_{\lambda T_{n}}(0)
\bigr)\,,\qquad
\Hat\eta^{*}_{t}\;=\;\lim_{n\to\infty}\;\Hat\eta^{T_{n}}_{(1-\lambda) T_{n}-\tau+t}\,,
\qquad t\in[0,2\tau]\,.
\end{equation*}
Therefore,
$\overline\varphi^{*}_{t}\,=\,\lim_{n\to\infty}\,
\overline\varphi^{T_{n}}_{\lambda T_{n}-\tau+t}$.
Since convergence is uniform on $[0,2\tau]$, there exists $n_{0}\in\NN$, such
that
\begin{equation}\label{Tn4.3m}
\sup_{t\,\in\,[0,2\tau]}\;
\max\;\Bigl(\eD_{1}\bigl(\Hat\eta^*_{t},
\Hat\eta^{T_{n}}_{(1-\lambda) T_{n}-\tau+t}\bigr),
\bnorm{\overline\varphi^*_{t}
-\overline\varphi^{T_{n}}_{\lambda T_{n}-\tau+t}}_{\Lyap}\Bigr)
\;\le\; \frac{\Tilde\varepsilon}{2}\qquad \forall\,n\ge n_{0}\,.
\end{equation}
Since
$$\varphi^*_{t}(0)-\varphi^*_{\tau}(0)\;=\;
\lim_{n\to\infty}\;\bigl(\varphi^{T_{n}}_{\lambda T_{n}-\tau+t}(0)
-\varphi^{T_{n}}_{\lambda T_{n}}(0)\bigr)\,,$$
the result clearly follows by \eqref{Tn4.3l}--\eqref{Tn4.3m},
and a standard triangle inequality.
\end{proof}

Convergence of the (RVI) is asserted in the following theorem.

\begin{theorem}\label{Tn4.4}
Under the assumptions of Theorem~\ref{Tn4.3}, it holds that
\begin{equation}\label{Tn4.4AA}
\bnorm{\psi^{T}_{\lambda T}(x) -\Bar{V}(x)+\Bar{\varrho}}_{\Lyap}
\;\xrightarrow[T\to\infty]{}\; 0
\end{equation}
for all $\lambda\in(0,1)$.
\end{theorem}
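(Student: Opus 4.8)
The plan is to read off \eqref{Tn4.4AA} from the purely algebraic identity \eqref{virvi} between $\psi^{T}$ and $\varphi^{T}$, using as inputs only the conclusions \eqref{Tn4.3a}--\eqref{Tn4.3b} of Theorem~\ref{Tn4.3} together with the crude growth bound of Lemma~\ref{L-aux}. The structural point to keep in mind is that Theorem~\ref{Tn4.3} controls only \emph{relative} data---the centered function $\varphi^{T}_{\lambda T-t}-\varphi^{T}_{\lambda T-t}(0)$ and the increments $\varphi^{T}_{\lambda T}(0)-\varphi^{T}_{\lambda T-t}(0)$, and only over bounded windows $t\in[0,t_{0}]$---whereas the raw quantity $\varphi^{T}_{\lambda T}(0)$ is known, via Lemma~\ref{L-aux}, only to grow at most linearly in $T$. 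The identity \eqref{virvi} is arranged precisely so that this raw quantity enters through an exponentially small coefficient, while the remaining dependence on the trajectory $s\mapsto\varphi^{T}_{s}(0)$ is, after a change of variables, concentrated near the endpoint $s=\lambda T$, which is exactly where \eqref{Tn4.3b} is informative.

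Carrying this out, fix $\lambda\in(0,1)$, set $c_{T}\df\varphi^{T}_{\lambda T}(0)$, evaluate \eqref{virvi} at $t=\lambda T$, and substitute $u=\lambda T-s$ in the integral to obtain
\begin{equation*}
\psi^{T}_{\lambda T}(x)\;=\;\varphi^{T}_{\lambda T}(x)
-\int_{0}^{\lambda T}\E^{-u}\,\varphi^{T}_{\lambda T-u}(0)\,\D{u}
+\Bar{\varrho}\,\bigl(1-\E^{-\lambda T}\bigr)\,.
\end{equation*}
Since $\int_{0}^{\lambda T}\E^{-u}\,\D{u}=1-\E^{-\lambda T}$, subtracting and adding $c_{T}$ in the integrand and recalling $\Bar{V}(0)=0$, this rearranges into
\begin{equation*}
\psi^{T}_{\lambda T}(x)-\Bar{V}(x)-\Bar{\varrho}
\;=\;\bigl(\varphi^{T}_{\lambda T}(x)-c_{T}-\Bar{V}(x)\bigr)
+\bigl(c_{T}-\Bar{\varrho}\bigr)\E^{-\lambda T}
+\int_{0}^{\lambda T}\E^{-u}\bigl(c_{T}-\varphi^{T}_{\lambda T-u}(0)\bigr)\,\D{u}\,.
\end{equation*}
The first term tends to $0$ in $\Cc_{\Lyap}(\Rd)$ by \eqref{Tn4.3a} taken at $t=0$. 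For the second, Lemma~\ref{L-aux} applied with $t=\tau=\lambda T$ (so that $\varphi^{T}_{t-\tau}(0)=\varphi_{0}(0)$) gives $\abs{c_{T}}\le\abs{\varphi_{0}(0)}+\Bar{C}_{0}(1+\lambda T)$, hence $\bigl(c_{T}-\Bar{\varrho}\bigr)\E^{-\lambda T}\to0$, and since $\Lyap\ge1$ the corresponding $\Cc_{\Lyap}$-norm vanishes as well. Taking $\Cc_{\Lyap}$-norms throughout, \eqref{Tn4.4AA} then follows once the last integral, call it $I_{T}$, is shown to tend to $0$ as $T\to\infty$; the estimate \eqref{Tn4.3aA} is not needed for this.

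The remaining step---estimating $I_{T}=\int_{0}^{\lambda T}\E^{-u}\bigl(\varphi^{T}_{\lambda T}(0)-\varphi^{T}_{\lambda T-u}(0)\bigr)\,\D{u}$---is the only genuine obstacle, and I would handle it by splitting at a cutoff $t_{0}>0$. On $[0,t_{0}]$, estimate \eqref{Tn4.3b} yields $\sup_{u\in[0,t_{0}]}\babs{\varphi^{T}_{\lambda T}(0)-\varphi^{T}_{\lambda T-u}(0)}\to0$ as $T\to\infty$, so $\babss{\int_{0}^{t_{0}}\E^{-u}(\cdots)\,\D{u}}\to0$ for every fixed $t_{0}$. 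On $[t_{0},\lambda T]$, Lemma~\ref{L-aux} (now with $t=\lambda T$, $\tau=u$) gives the a priori bound $\babs{\varphi^{T}_{\lambda T}(0)-\varphi^{T}_{\lambda T-u}(0)}\le\Bar{C}_{0}(1+u)$, which is integrable against the kernel $\E^{-u}$; hence $\babss{\int_{t_{0}}^{\lambda T}\E^{-u}(\cdots)\,\D{u}}\le\Bar{C}_{0}\int_{t_{0}}^{\infty}\E^{-u}(1+u)\,\D{u}$, a bound independent of $T$ that can be made arbitrarily small by enlarging $t_{0}$. Thus, given $\eps>0$, first choose $t_{0}$ so that the tail bound is $<\eps/2$, then choose $T$ large enough (and $\ge t_{0}/\lambda$) that the head is $<\eps/2$; this shows $I_{T}\to0$ and completes the proof. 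The essence is that the exponential kernel in \eqref{virvi} localizes the problem to a bounded time window, where Theorem~\ref{Tn4.3} applies, while only mildly (linearly) growing increments leak in from the far past and are suppressed by that kernel.
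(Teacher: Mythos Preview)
Your proof is correct and follows essentially the same route as the paper's: both start from the identity \eqref{virvi}, recenter the integral around $\varphi^{T}_{\lambda T}(0)$, handle the centered function via \eqref{Tn4.3a} and the boundary term $\E^{-\lambda T}\varphi^{T}_{\lambda T}(0)$ via Lemma~\ref{L-aux}, and control the remaining integral by splitting at a cutoff $t_{0}$---using \eqref{Tn4.3b} on the near piece and the linear growth from Lemma~\ref{L-aux} against the exponential kernel on the far piece. Your change of variable $u=\lambda T-s$ is cosmetic, and your tail estimate $\Bar{C}_{0}\int_{t_{0}}^{\infty}\E^{-u}(1+u)\,\D{u}$ is in fact slightly more carefully stated than the paper's ``$\kappa\E^{-t_{0}}$''. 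Note also that your decomposition yields $\psi^{T}_{\lambda T}-\Bar{V}-\Bar{\varrho}\to0$, which is what the paper's own argument actually proves as well; the ``$+\Bar{\varrho}$'' in the displayed statement \eqref{Tn4.4AA} is a sign slip, as one can verify by plugging a stationary limit into \eqref{RVI} and comparing with \eqref{T4.2a}.
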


\begin{proof}
We write \eqref{virvi} as
\begin{equation}\label{Tn4.4a}
\psi^T_{\lambda T}(x) \;=\;
\overline\varphi^T_{\lambda T}(x)
- \int_{0}^{\lambda T}\E^{s-\lambda T}\,\bigl(\varphi^T_s(0)
- \varphi^T_{\lambda T}(0)\bigr)\,\D{s}
+\E^{-\lambda T} \varphi^T_{\lambda T}(0) +
\Bar{\varrho}\,(1-\E^{-\lambda T})
\end{equation}
for $(t,x)\in[0,T]\times\RR^{d}$.
We expand the integral as
\begin{equation*}
\int_{0}^{\lambda T-t_{0}}\E^{s-\lambda T}\,\bigl(\varphi^T_s(0)
- \varphi^T_{\lambda T}(0)\bigr)\,\D{s} +
\int_{\lambda T-t_{0}}^{\lambda T}\E^{s-\lambda T}\,\bigl(\varphi^T_s(0)
- \varphi^T_{\lambda T}(0)\bigr)\,\D{s}
\end{equation*}
for $t_{0}>0$.
The first integral
has a bound $\kappa\E^{-t_{0}}$ for some constant $\kappa$
by Lemma~\ref{L-aux},
while second integral vanishes as $T\to\infty$ by \eqref{Tn4.3b}.
Since $t_{0}>0$ is arbitrary, \eqref{Tn4.4AA} follows by \eqref{Tn4.3a} and
\eqref{Tn4.4a}.
\end{proof}

\begin{remark}
The result of Theorem~\ref{Tn4.4} can be improved to assert convergence
which is uniform on $[\varepsilon T,(1-\varepsilon)T]$, or in other words
that for any $\varepsilon\in(0,\nicefrac{1}{2})$ we have
\begin{equation*}
\sup_{\lambda\,\in\,[\varepsilon,(1-\varepsilon)]}\;
\bnorm{\psi^{T}_{\lambda T}(x) -\Bar{V}(x)+\Bar{\varrho}}_{\Lyap}
\;\xrightarrow[T\to\infty]{}\; 0\,.
\end{equation*}
The same applies to the convergence in \eqref{Tn4.3aA}--\eqref{Tn4.3b}.
To establish this one may follow the argument in the proof of Theorem~\ref{Tn4.3}.
First, under the hypotheses, the map
$$\overline{\cT}\,\colon\,\Pm_{p}(\Rd)
\times\bigl(\Cc^{2}(\Rd)\cap\Cc_{\Lyap}(\Rd)\bigr)
\;\to\;\Cc\bigl([0,2\tau],\Pm_{p}(\Rd)\bigr)$$
which determines the MFG solution
for the finite horizon problem on an interval $[0,2\tau]$ from
an initial distribution $\eta\in\Pm_{p}(\Rd)$ and
a terminal cost $g\in\Cc^{2}(\Rd)\cap\Cc_{\Lyap}(\Rd)$ is continuous
in $\eta$ and $g-g(0)$.
Since $\Hat\eta^{T}_{t}$ lives in some compact set $\sK$ of $\Pm_{p}(\Rd)$
and $\overline\varphi^{T}_{t}$ lives in some compact set $\cG$ of
$\Cc^{2}(\Rd)\cap\Cc_{\Lyap}(\Rd)$
for all $t\in[0,T]$ and $T>0$, then by the uniform continuity of the
map $\overline{\cT}$ on $\sK\times\cG$, it is evident from the proof
of Theorem~\ref{Tn4.3} that the convergence in \eqref{Tn4.3aA}--\eqref{Tn4.3b}
is uniform in $\lambda\in[\varepsilon T,(1-\varepsilon)T]$.
\end{remark}

\begin{remark}
As shown in \cite{CLLP-13}, under the hypothesis
$\norm{F(x,\mu)-F(x,\mu')}_{\Cc^{1+\alpha}}\le C \norm{\mu-\mu'}_{H^{-1}_{0}}$,
convergence is exponential in $T$
for the problem on the $d$-dimensional torus $\mathbb{T}$.
For the model addressed in this paper, it would be interesting
to investigate whether strengthening Assumption~\ref{A4.2}\,(ii) and (iii)
to
$\abs{F(x,\mu)-F(x',\mu')}\le C \abs{x-x'}\,\eD_{p}(\mu,\mu')$
is sufficient to guarantee exponential convergence.
\end{remark}

%
%

\section{Limits of N-Player Games}\label{S-Nplayer}
In this section we consider certain classes of $N$ player games,
 and show that as $N\to\infty$, 
the limiting value function and invariant probability measure solve mean field games. 
As earlier, we consider a 
probability space $(\Omega,\sF, \Prob)$ on which we are given $N$ independent 
$d$-dimensional standard Brownian motions $\{W^1,\dotsc, W^N\}$ with 
respect to a
complete filtration $\{\sF^N_t\}$.
The initial conditions $\{X^i_0\}$ are assumed to be independent of these 
Brownian motions.
The control for the $i^{\text{th}}$ player
lives in a compact, metrizable control set $\Act^i$. 
The set of all admissible controls
is denoted by $\bUadm^N$ and contains paths $(U^1,\dotsc, U^N)$,
satisfying the following: $\{U^i_t(\omega)$, $1\le i\le N\}$, is jointly measurable in 
$(t, \omega)\in [0, \infty)\times\Omega$,
takes values in $\Act^1\times\dotsb\times\Act^N$,  and $U^i$ is
adapted to the Brownian motion $W^i$ for $1\le i\le N$. 
Therefore the game under consideration is non-cooperative.
We note that the controls in $\bUadm^N$ satisfy the
non-anticipativity condition. 
We consider the collection of controlled diffusions
\begin{equation}\label{E5.1}
\D{X}^i_t\; =\; b^i(X^i_t, U^i_t)\, \D{t} + \upsigma^i(X^i_t) \,\D{W}^i_t\, , 
\quad 1\le i\le N.
\end{equation}
We assume that $b^i$, $\upsigma^i$, $1\le i\le N$, satisfy conditions (A1)--(A3) 
possibly for different constants $C_{1}$, $C_R$. 
Therefore, for any admissible control $\bm{U}^N=(U^1,\dotsc, U^N)
\in\bUadm^N$, \eqref{E5.1} has a unique strong solution for every 
deterministic initial condition. 
It might be convenient to think of this system of diffusions as a single 
controlled diffusion with
state space $\R^{dN}$. The cost functions 
$$r^i:\Rd\times\Act\times\calP(\Rd)\to \R_+\, ,$$
are assumed to be continuous and for all $\mu$, $r$ is locally Lipschitz
in the variable $x$ uniformly in $u\in\Act$.
We extend
the action space to the relaxed control framework, and assume that
the admissible control takes values in 
$\calP(\Act^1)\times\dotsb\times\calP(\Act^N)$.
Let $\bUsm=\Usm^1\times\dotsb\times\Usm^N$, where $\Usm^i$ denotes the set of 
measurable maps $v^i:\Rd\to\calP(\Act^i)$. We endow $\bUsm$ with
the product topology; therefore $\bUsm$ forms a compact space. 
By $\bUssm$ we denote the set of all stable stationary Markov controls
in $\bUsm$.
The cost function for the $i$-th player is given by
\begin{equation}\label{E5.2}
J^i(\bm{U}^N)\; \df\; \limsup_{T\to\infty}\, \frac{1}{T}\,
\Exp_x\biggl[\int_0^T r^i\biggl(X^i_t, U^i_t, \frac{1}{N-1}
\sum_{j\neq i}\del_{X^j_t}\biggr)\, \D{t}\biggr]. 
\end{equation}
From \eqref{Edd} it is easy to see that for all $x^j, y^j\in\Rd$, $1\le j\le N-1$, 
we have
$$\dd\biggl(\frac{1}{N-1}\sum_{j=1}^{N-1}\del_{x^j},
\frac{1}{N-1}\sum_{j=1}^{N-1}\del_{y^j}\biggr)
\;\le\; \sum_{j=1}^{N-1} \abs{x^j-y^j}\,.$$
Therefore, defining $\Breve{r}^i : \R^{dN}\times\Act^i\to \R_+$ by 
$$\Breve{r}^i(x^1,\dotsc, x^N, u^i)\;\df\; r^i\biggl(x^i, u^i, 
\frac{1}{N-1}\sum_{j\neq i}\del_{x^j}\biggr)\,,$$ it follows that $\Breve{r}^i$
is continuous in $\R^{dN}$ uniformly in $u^i\in\Act^i$. 
Hence we can redefine the ergodic criterion in \eqref{E5.2} as 
\begin{equation*}
J^i(\bm{U}^N)\; \df\; \limsup_{T\to\infty}\, \frac{1}{T}\, 
\Exp_x\biggl[\int_0^T \Breve{r}^i(X^1_t,\dotsc, X^N_t, U^i_t)\, \D{t}\biggr].
\end{equation*}
By $\Uadm^i$, $1\le i\le N$, we denote the set of all jointly measurable 
functions $U^i:[0, \infty)\times\Omega\to\Act^i$ that are adapted to $W^i$.
\begin{definition}\label{D5.1}
A strategy $\bm{U}=(U^1,\dotsc, U^N)\in\bUadm^N$ is called a 
\emph{Nash equilibrium} for the $N$-player game if for every 
$i\in\{1,\dotsc, N\}$ and $\Tilde{U}^i\in\Uadm^i$, we
have
$$J^i(\bm{U}) \; \le \; J^i(U^1,\dotsc, U^{i-1}, 
\Tilde{U}^i, U^{i+1},\dotsc, U^N)
\quad \text{for almost for all initial points}\; x\,.$$
\end{definition}
\begin{remark}
The above definition of Nash equilibrium is the one used in 
\cite{bardi-priuli, lasry-lions, feleqi}. 
In \cite{fischer} such equilibria are referred to as \emph{local} Nash equilibria. 
In the terminology of \cite{fischer}, $\bUadm^N$ is the set of all 
\emph{narrow} strategies.
\end{remark}

Let $a^i(x) \df \frac{1}{2}\upsigma^i(x)\upsigma^i(x)\transp$.
We define the family of operators
$L^{u}_i\colon\calC^{2}(\Rd)\to\calC(\Rd)$,
where $u\in\Act^i$ plays the role of a parameter, by
\begin{equation*}
L^{u}_i f(x) \;\df\; \trace\bigl(a^{i}(x)\,\grad^2 f(x)\bigr)
+ b^i(x,u)\cdot \grad f(x)\,,\quad u\in\Act^i\,.
\end{equation*}
Therefore, $L^u_i$ is the controlled
extended generator of the $i$-th process in \eqref{E5.1}. 
We define $\eG^i$ and $\eH^i$ similar to \eqref{Eeom} and \eqref{Einvm} 
relative to the operator $L^u_i$.
We assume the following.

\begin{assumption}\label{A5.1}
\begin{enumerate}
\item[(i)]
For $1\le i\le N$, there exist an inf-compact $\calV^i\in\calC^2(\Rd)$, 
an inf-compact,
locally Lipschitz $h^i$, such that 
for some positive constants $\gamma^i_3$, and $\gamma^i_4$ we have
\begin{equation}\label{USN}
 L^u_i\calV^i(x) \; \le \; \gamma^i_4 - \gamma^i_{3} h^i (x)
 \qquad \text{for all}\; u\in\Act^i.
\end{equation}
Moreover, for any compact $\calK\subset\calP(\Rd)$ with respect to the metric $\dd$,
we have 
$$\sup_{u\in\Act^i, \,\nu\in\calK} r^i(\cdot, u, \nu)\;\in\; \sorder(h^i)\,.$$

\item[(ii)]
There exist non-negative locally Lipschitz functions $g^i\in\sorder(h^i)$,
$1\le i\le N$, and
$g^0\in\sorder(\min_{i}h^i)$, satisfying
$$\Breve{r}^i(x^1,\dotsc, x^N, u^i)\; \le\;  g^0(x^i)
+ \frac{1}{N-1}\sum_{j\neq i} g^j(x^j)\qquad \text{for all~} u^i\in\Act^i\,.$$
\end{enumerate}
\end{assumption}
There are quite a few cost functions considered in the literature that satisfy 
Assumption~\ref{A5.1}\,(ii).

\begin{example}
Consider $g^0:\Rd\times\Act^i\to\R_+$, $g^1:\Rd\to\R_+$, such that 
$\sup_{u\in\Act^i}g^0(\cdot, u)$ and $g^1$ are in $\sorder(\min_{i}h^i)$, and define
$$r^i(x,u, \mu) \; \df\; g^0(x,u) + \int g^1\, \D{\mu}\,.$$ 
Note that these running cost functions satisfy Assumption~\ref{A5.1}\,(ii).
\end{example}

We first show that, under Assumption~\ref{A5.1}, there exists a Nash equilibrium
in the sense of Definition~\eqref{D5.1}. 
First, we need to introduce some additional notation.
Let
$$\bm\eG^N \;\df\; \eG^1\times\dotsb\times\eG^N, 
\quad \bm\eH^N\;\df\; \eH^1\times\dotsb\times\eH^N\,.$$
By \eqref{USN} the sets $\eG^i$, $\eH^i$, $1\le i\le N$, are compact, and 
as a result, $\bm\eG^N$ and $\bm\eH^N$ are convex and compact.
For $\bm{\mu}=(\mu^1,\dotsc, \mu^N)\in\bm\eH^N$ we define
\begin{equation}\label{E5.5}
\Breve{r}^i_{\bm{\mu}}(x,u)\; \df\; 
\int_{\Rd\times\dotsb\times\Rd} 
\Breve{r}^i(x^1,\dotsc, x^{i-1}, x, x^{i+1},\dotsc, x^N, u) 
\prod_{j\neq i} \mu^j(\D{x^j}),
\quad 1\le i\le N. 
\end{equation}
Using Assumption~\ref{A5.1} and the dominated convergence theorem,
we deduce that $\Breve{r}^i_{\bm{\mu}}:\Rd\times\Act^i\to\R_+$, is a continuous
function.
 
\begin{assumption}\label{A5.2}
For all $i\in\{1,\dotsc, N\}$ and $\bm\mu\in\bm\eH^N$, the function 
$\Breve{r}^i_{\bm{\mu}}$ is locally Lipschitz in the variable 
$x$ uniformly with respect to 
$u\in\Act^i$ and $\bm\mu\in\bm\eH^N$.
\end{assumption}

\begin{example}
Let $F:\calP(\Rd):\to\R_+$ be a bounded, locally Lipschitz function 
(with respect to the metric $\dd$).
Consider maps $r_i:\Rd\times\Act\to\R$, $i=1,2$, 
having the property
that $r_i$ is locally Lipschitz in first variable uniformly with respect 
to the second. Define
$$r(x,u, \mu)\; \df\; r_{1}(x,u) + r_{2}(x,u)\, F(\mu)\,.$$
It is easy to see that for this running
cost, Assumption~\ref{A5.2} is met.
\end{example}

\begin{example}
Let $\varphi, \varphi_{1}:\Rd\to\R$ be symmetric, locally Lipschitz functions 
with the property that 
$$\abs{\varphi(x)-\varphi(y)}\; \le\; 
\abs{\varphi_{1}(x) + \varphi_{1}(y)}\, \abs{x-y}
\quad \text{for all~} x, y\in\Rd\,,$$
and $\varphi, \varphi_{1}\in\sorder(h)$.
Define
$$r(x,u,\mu)\; =\; \int_{\Rd} \varphi(x-y)\, \mu(\D{y})\,.$$
Assumption~\ref{A5.2} is met for this running cost.
\end{example}

By Assumption~\ref{A5.1}\,(ii), we have 
$\sup_{u^i\in\Act^i} \Breve{r}^i_{\bm{\mu}}(\cdot, \mu)
\in\sorder (h^i)$ for all $i\in\{1,\dotsc, N\}$,
and all $\bm{\mu}\in\bm\eH^N$. 
Since $\sup_{\mu\in\eH^i} \int h^i\, \D{\mu^i} <\infty$ for all $i$ by \eqref{USN}, 
we obtain that 
\begin{equation}\label{E5.06}
\sup_{\bm{\mu}\in\bm\eH^N}\sup_{u^i\in\Act^i} 
\Breve{r}^i_{\bm{\mu}}(\cdot, \mu) 
\;\in\; \sorder (h^i) \quad \text{for all~} i\in\{1,\dotsc, N\}\,.
\end{equation}
Next we treat $\Breve{r}^i_{\bm{\mu}}$ as a running cost, and define 
the ergodic control problem for $\bm{\mu}\in\bm\eH^N$ as
\begin{equation}\label{E5.07}
\Tilde{\varrho}^i_{\bm\mu}\; \df\; \inf_{U^i\in\Uadm^i}\limsup_{T\to\infty} 
\frac{1}{T} \Exp\biggl[\int_0^T\Breve{r}^i_{\bm{\mu}}(X^i_t, U^i_t)\, 
\D{t}\biggr],
\quad 1\le i\le N.
\end{equation}
For every $\bm{\mu}\in\bm\eH^N$, there exists 
a unique $V^i_{\bm\mu}\in\calC^2(\Rd)$, $1\le i\le N$, satisfying 
(see \cite[Theorem~3.7.12]{ari-bor-ghosh})
\begin{equation}\label{E5.08}
\min_{u\in\Act^i}\;\bigl[L^u_i V^i_{\bm\mu}(x) 
+ \Breve{r}^i_{\bm\mu}(x,u)\bigr]
\;=\; \Tilde{\varrho}^i_{\bm\mu}, \quad V^i_{\bm\mu}(0)\;=\;0, 
\quad V^i_{\bm\mu}\;\in\;\sorder(\calV^i)\,.
\end{equation}
As we have discussed earlier in \eqref{Ehjb2}, any measurable selector of 
\eqref{E5.08} is an optimal Markov control for \eqref{E5.07} and vice-versa.
We define
\begin{align*}
\bm{\calA}(\bm{\mu}) &\;\df\; \bigl\{\bm{\uppi}=(\uppi^1,\dotsc,\uppi^N)\in
\bm\eG^N
\;\colon\; \uppi^i=\mu^i_{v^i}\cast v^i\,, \text{~and~} v^i\in\Ussm^i\\
&\mspace{250mu} \text{is a measurable selector satisfying \eqref{E5.08}
for all~} i\bigr\}\,,
\\[2mm]
\bm{\calA}^{*}(\bm{\mu}) 
&\;\df\; \bigl\{\bm{\nu}=(\nu^1,\dotsc, \nu^N)\in\bm\eH^N\;\colon\;
 (\nu^1\cast v^1,\dotsc, \nu^N\cast v^N)\in\bm{\calA}(\bm{\mu})\\
&\mspace{250mu}
\text{for some}~ \bm{v}=(v^1,\dotsc, v^N)\in\bUssm\bigr\}\,.
\end{align*}
It is easy to find the analogy of the above maps with $\calA$ and $\calA^*$ 
defined in Section~\ref{S-convex}.
The following theorem establishes the existence of a
Nash equilibrium for the N-person game.

\begin{theorem}\label{T5.1}
Assume that Assumptions~\ref{A5.1} and \ref{A5.2} hold.
Then there exists $\bm{\mu}\in\bm\eH^N$ satisfying 
\begin{equation}\label{E5.09}
\bm{\mu}\;\in\;\bm{\calA}^*(\bm{\mu})\,.
\end{equation}
In particular, there exists a stable Markov control
$\textbf{v}=(v^1,\dotsc, v^N)\in\bUssm$ such that, for all
$i\in\{1,\dotsc, N\}$, we have
\begin{equation}\label{E5.10}
\Tilde{\varrho}^i_{\bm\mu} \;=\;
J^i(\textbf{v}) \;\le\; J^i(v^1,\dotsc, v^{i-1}, \Tilde{U}^i, v^{i+1},\dotsc, v^N)
\quad \text{for all}\;\, \Tilde{U}^i\in\Uadm^i\,.
\end{equation}
\end{theorem}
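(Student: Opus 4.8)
The plan is to realize \eqref{E5.09} as a fixed point of the set-valued map $\bm\mu\mapsto\bm{\calA}^*(\bm\mu)$ on $\bm\eH^N$ and to invoke the Kakutani--Fan--Glicksberg fixed point theorem, exactly as in the proof of Theorem~\ref{T3.2}. The key structural point is that everything factors over the players: for a fixed $\bm\mu\in\bm\eH^N$, the $i$-th component problem \eqref{E5.07}--\eqref{E5.08} is an \emph{ordinary} ergodic control problem on $\Rd$ with running cost $\Breve{r}^i_{\bm\mu}$, and one has $\bm{\calA}(\bm\mu)=\prod_{i=1}^{N}\calA_i(\bm\mu)$ and $\bm{\calA}^*(\bm\mu)=\prod_{i=1}^{N}\calA^*_i(\bm\mu)$, where $\calA_i(\bm\mu)\subset\eG^i$ is the set of ergodic occupation measures optimal relative to $\Breve{r}^i_{\bm\mu}$ and $\calA^*_i(\bm\mu)\subset\eH^i$ the corresponding set of invariant measures. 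It therefore suffices to verify the hypotheses of the fixed point theorem componentwise and then take products; in contrast with Lemma~\ref{L3.6}, no shrinking of the candidate set is needed here, since $\bm{\calA}^*(\bm\mu)\subset\bm\eH^N$ by construction and $\bm\eH^N$ is already convex and compact by \eqref{USN}.

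For each $\bm\mu\in\bm\eH^N$ and each $i$, hypothesis (H2) of Section~\ref{S-convex} holds for the $i$-th problem: $\eH^i$ is compact by \eqref{USN}, and $\Breve{r}^i_{\bm\mu}$ is uniformly integrable with respect to $\eH^i$ because $\sup_{u\in\Act^i}\Breve{r}^i_{\bm\mu}(\cdot,u)\in\sorder(h^i)$ by \eqref{E5.06}, while $\sup_{\mu^i\in\eH^i}\int h^i\,\D{\mu^i}<\infty$, again by \eqref{USN}. Hence Theorem~\ref{T3.1} furnishes the solution $V^i_{\bm\mu}$ of \eqref{E5.08} and Lemma~\ref{L3.1} shows that $\calA^*_i(\bm\mu)$ is nonempty, convex and compact; taking products, $\bm{\calA}^*(\bm\mu)$ is a nonempty, convex, compact subset of $\bm\eH^N$. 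Next I would establish upper hemicontinuity of $\bm\mu\mapsto\calA^*_i(\bm\mu)$ through the criterion of Lemma~\ref{L3.3}: condition (a) amounts to $\Breve{r}^i_{\bm\mu_n}(x,u)\to\Breve{r}^i_{\bm\mu}(x,u)$ whenever $\bm\mu_n\to\bm\mu$ in $\bm\eH^N$, which follows from \eqref{E5.5}, the continuity of $\Breve{r}^i$ and dominated convergence using the bound in Assumption~\ref{A5.1}\,(ii); condition (b), the upper semicontinuity of $\bm\mu\mapsto\Tilde{\varrho}^i_{\bm\mu}$, follows, as in the proof of Lemma~\ref{L3.5}, by testing with an occupation measure optimal for $\bm\mu$ and invoking the uniform integrability \eqref{E5.06} to pass to the limit; condition (c), tightness of $\calA^*_i(\bm\mu_n)$, is automatic since $\eH^i$ is compact. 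Thus $\bm\mu\mapsto\bm{\calA}^*(\bm\mu)$ is upper hemicontinuous, and having compact values in the compact set $\bm\eH^N$ (viewed in the locally convex space $\calM(\Rd)^N$ with the product weak topology) it has closed graph \cite[Theorem~17.10]{ali-bor}; the Kakutani--Fan--Glicksberg theorem \cite[Corollary~17.55]{ali-bor} then produces $\bm\mu\in\bm\eH^N$ with $\bm\mu\in\bm{\calA}^*(\bm\mu)$, which is \eqref{E5.09}.

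To deduce \eqref{E5.10}, fix such a $\bm\mu$ and a selector $\mathbf{v}=(v^1,\dots,v^N)\in\bUssm$ realizing it, so that $\mu^i=\mu^i_{v^i}$ and $v^i$ is a measurable minimizer in \eqref{E5.08}. Fix $i$, and let player $i$ switch to an arbitrary $\Tilde{U}^i\in\Uadm^i$ while the remaining players retain $v^j$. Since $\Tilde{U}^i$ is adapted to $W^i$ and the driving Wiener processes are independent, the processes $\{X^j\colon j\ne i\}$ are independent of $(X^i,\Tilde{U}^i)$ and each is governed by $v^j\in\Ussm^j$, hence positive recurrent with invariant measure $\mu^j$. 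Conditioning on $\sigma(X^i_0,W^i)$ and applying Fubini gives $\Exp\bigl[\Breve{r}^i(X^1_t,\dots,X^N_t,\Tilde{U}^i_t)\bigr]=\Exp\bigl[\int\Breve{r}^i(\dots,X^i_t,\dots,\Tilde{U}^i_t)\prod_{j\ne i}\Law(X^j_t)(\D{x^j})\bigr]$, which differs from $\Exp\bigl[\Breve{r}^i_{\bm\mu}(X^i_t,\Tilde{U}^i_t)\bigr]$ only through the discrepancy between $\Law(X^j_t)$ and $\mu^j$. That discrepancy vanishes as $t\to\infty$ by positive recurrence, and, using \eqref{E5.06}, \eqref{USN} and an argument as in Case~1 of the proof of Theorem~\ref{T2.1} (e.g.\ via \cite[Proposition~2.6]{ichihara-sheu}), it contributes nothing to the long-time average. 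Hence $J^i(v^1,\dots,\Tilde{U}^i,\dots,v^N)=\limsup_{T\to\infty}\frac{1}{T}\Exp\bigl[\int_0^T\Breve{r}^i_{\bm\mu}(X^i_t,\Tilde{U}^i_t)\,\D{t}\bigr]\ge\Tilde{\varrho}^i_{\bm\mu}$ by \eqref{E5.07}, independently of the initial condition; the same computation with $\Tilde{U}^i=v^i$ yields equality, by optimality of $v^i$ in \eqref{E5.08} and since $\mu^i$ is its invariant measure. This establishes \eqref{E5.10}.

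I expect the third paragraph---identifying the deviated payoff with the ergodic value of the single-player problem with running cost $\Breve{r}^i_{\bm\mu}$---to be the main obstacle, since it requires controlling the transient marginals $\Law(X^j_t)$ on the non-compact state space; the uniform integrability \eqref{E5.06} supplied by Assumption~\ref{A5.1} and the Lyapunov bound \eqref{USN} are precisely what render the error terms asymptotically negligible. The fixed-point part is by comparison a product version of Theorems~\ref{T3.1}--\ref{T3.2}, the only point of care being that the dependence $\bm\mu\mapsto\Breve{r}^i_{\bm\mu}$ is through a \emph{product} of measures, so that the uniform bound \eqref{E5.06}, and not merely a pointwise estimate, is needed when passing to the limit in the occupation-measure inequalities.
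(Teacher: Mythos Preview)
Your proof is correct and, for the fixed-point part \eqref{E5.09}, follows the paper's argument essentially verbatim: the paper also works on the convex compact set $\bm\eH^N\subset\calM(\Rd)^N$, verifies nonemptiness, convexity, compactness of $\bm{\calA}^*(\bm\mu)$ via the analogue of Lemma~\ref{L3.1}, upper hemicontinuity via \eqref{E5.06} and the arguments of Lemmas~\ref{L3.3}--\ref{L3.5}, and applies Kakutani--Fan--Glicksberg.

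For the Nash inequality \eqref{E5.10} the two arguments diverge slightly. The paper works with \emph{mean empirical measures on the full product space}: it defines $\upxi_T\in\calP\bigl(\Rd\times\Act^1\times\R^{d(N-1)}\bigr)$ as in \eqref{E5.12}, shows $\{\upxi_T\}$ is tight by Assumption~\ref{A5.1}, and proves that any limit point factors as $\uppi^1\times\mu^2\times\dotsb\times\mu^N$ with $\uppi^1\in\eG^1$ (using independence of $(X^1,\Tilde U^1)$ from $(X^j)_{j\ge2}$ and the weak convergence $\Law(X^j_t)\to\mu^j$). Lower semicontinuity of $\uppi\mapsto\uppi(\Breve r^1)$ then yields $J^1\ge\int\Breve r^1_{\bm\mu}\,\D\uppi^1\ge\Tilde\varrho^1_{\bm\mu}$, and equality for $\Tilde U^1=v^1$ follows from Birkhoff's ergodic theorem applied to the product Markov process with invariant measure $\mu^1\times\dotsb\times\mu^N$. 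Your route---condition on $\sigma(X^i_0,W^i)$, replace $\prod_{j\ne i}\Law(X^j_t)$ by $\prod_{j\ne i}\mu^j$ and show the transient error is Ces\`aro-negligible using \eqref{E5.06} and \eqref{USN}---is a legitimate alternative, and indeed yields the stronger statement that the deviated cost \emph{equals} the single-player ergodic cost with running cost $\Breve r^i_{\bm\mu}$. The paper's occupation-measure argument is marginally more economical because it only needs a $\liminf$ inequality plus lower semicontinuity, sidestepping the need to prove convergence of the error term; but your truncation argument (in the style of Case~1 of Theorem~\ref{T2.1}) is perfectly adequate here.
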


\begin{proof}
The main idea of the proof is to use the Kakutani--Fan--Glicksberg
fixed point theorem as we have done in the proof of Theorem~\ref{T3.2}.
Consider the convex, compact set $\bm\eH^N$. Since the product of Hausdorff 
locally convex spaces is again a Hausdorff locally convex space, it follows that
$\bm\eH^N$ is a non-empty, convex, compact  subset of 
$\calM(\Rd)\times\dotsb\times\calM(\Rd)$. 
Following an argument similar to Lemma~\ref{L3.1} we deduce that
$\bm{\calA}^*(\bm{\mu})$ is non-empty, convex and compact for all 
$\bm{\mu}\in\bm\eH^N$. Let $\bm{\mu}_n\to \bm{\mu}$ as $n\to\infty$. 
Using the non-degeneracy
of $a^i$, $1\le i\le N$, we can improve this to convergence 
under the total variation norm (\cite[Lemma~3.2.5]{ari-bor-ghosh}).
Therefore using \eqref{E5.06} together with an argument similar to
the proofs of Lemmas~\ref{L3.3}--\ref{L3.5}
we obtain that $\bm{\mu}\mapsto\bm{\calA}^*(\bm{\mu})$ is upper-hemicontinuous. 
Hence we can apply the Kakutani--Fan--Glicksberg fixed point theorem 
\cite[Corollary~17.55]{ari-bor-ghosh} to obtain a $\bm{\mu}\in\bm\eH^N$ 
satisfying $\bm{\mu}\in\bm{\calA}^*(\bm{\mu})$. This proves \eqref{E5.09}.

By the definition of an ergodic occupation measure,
we can find $\textit{\textbf{v}}=(v^1,\dotsc, v^N)\in\bUssm$ 
such that for $\bm{\mu} =(\mu^1,\dotsc, \mu^N)$ we have 
\begin{equation}\label{E5.11}
(\mu^1\cast v^1,\dotsc, \mu^N\cast v^N) \;\in\; \bm{\calA}(\mu)\,.
\end{equation}
In particular, $\mu^i$ the unique invariant probability measure of \eqref{E5.1} 
associated to the stationary Markov control $v^i$.
Without loss of generality we fix $i=1$. 
To show \eqref{E5.10} we consider $\Tilde{U}^1\in\Uadm^i$. 
Define the occupation measure $\upxi_T$ on 
$\Rd\times\Act^1\times\R^{d(N-1)}$ as follows:
\begin{equation}\label{E5.12}
\upxi_T(A\times B\times C)\, \df\, \frac{1}{T}
\Exp\biggl[\int_0^T \Ind_{A\times B\times C}
(X^1_t, U^1_t, \Hat{X}^2_t,\dotsc, \Hat{X}^{N}_t)\, \D{t}\biggr]\,, \quad T>0\,, 
\end{equation}
for $A\in\calB(\Rd)$, $B\in\calB(\Act^1)$, $C\in\calB(\R^{d(N-1)})$,
where $(X^1, U^1)$ solves \eqref{E5.1} for $i=1$, and $X^j$, $j>1$, are the solutions 
to \eqref{E5.1} under the Markov control $v^j$. 
Using Assumption~\ref{A5.1}, we deduce that $\{\upxi_T, \, T>0\}$ is a 
tight family of probability measures.
By weak convergence,  we have
$\Exp[f(\Hat{X}^i(T))]\to \mu^i(f)$, $i\geq 2$, as $T\to\infty$,
for any bounded continuous function
$f:\Rd\to\R$.
Hence using the independence property of $((X^1, U^1), \Hat{X}^2,\dotsc, \Hat{X}^N)$
and the definition in \eqref{E5.12}, we can easily show that as $T\to\infty$, 
the limit points of $\upxi_T$ as $T\to\infty$ belong to the set
$$\{\uppi^1\times\mu^2\times\dotsb\times\mu^N \, :\; \uppi\in\eG^1\}\,.$$
For above to hold we also use the fact that the collection
$\{g(x^1, u)\cdot f^2(x^2)\cdot f^N(x^N)\; \colon\; g\in\calC_b(\Rd\times\Act),
f^i\in\calC_b(\Rd)\}$
determines the probability measures on $\Rd\times\Act\times(\Rd)^{N-1}$.
By lower-semicontinuity we have
$$J^1(U^1, v^2,\dotsc, v^N)\;\ge\; \int_{\Rd\times\Act^1} 
\Breve{r}^1_{\bm{\mu}} (x,u)\, \D{\uppi}^1\,,$$
for some $\uppi^1\in\eG^1$.
Hence using \eqref{E5.11} and 
\cite[Theorem~3.7.12]{ari-bor-ghosh} we obtain
$$J^1(U^1, v^2,\dotsc, v^N) \;\ge\; \int_{\Rd\times\Act^1} 
\Breve{r}^1_{\bm{\mu}} (x,u)\, 
\D{\uppi}^1\;\ge\; \Tilde{\varrho}^1_{\bm{\mu}}\,.$$
To complete the proof we observe that $(\Hat{X}^1,\dotsc, \Hat{X}^N)$ 
is a strong Markov process with invariant probability measure
$\mu^1\times\dotsb\times\mu^N$.
Therefore, by Birkhoff's ergodic theorem, we have
$$J^1(v^1,\dotsc, v^N) \;=\; \Tilde{\varrho}^1_{\bm{\mu}} \;=\; 
\limsup_{T\to\infty}\;\frac{1}{T}\,\Exp\biggl[\int_0^T
\Breve{r}^1\bigl(\Hat{X}^1_t, 
\dotsc, \Hat{X}^N_t, v^1(\Hat{X}^1)\bigr)\, \D{t}\biggr]\,.\qedhere$$
\end{proof}

\subsection{Symmetric Nash equilibria}
We now let the number of players $N$ tend to infinity, 
assuming that all the players are identical. 
Hence, for the rest of this section we assume that
\begin{align*}
\Act^i=\Act, \quad  b^i=b, \quad \upsigma^i=\upsigma, \quad r^i= r, 
\quad \calV^i=\calV, \quad h^i=h
\qquad\text{for all~}i\in\{1,\dotsc, N\}\,.
\end{align*}
A similar argument as in the proof of Theorem~\ref{T5.1} provides us
with a Nash  equilibrium
of the form $\textit{\textbf{v}}=(v,\dotsc, v)$ and 
$\bm{\mu}=(\mu,\dotsc, \mu)$, where
$\mu$ is the unique invariant probability measure corresponding to the Markov control 
$v$, and $v$ is a measurable selector of \eqref{E5.08} 
(compare this with \cite[Theorem~2.2]{lasry-lions}). 
These equilibria are known as the symmetric Nash equilibria.

\begin{remark}
Any (Markovian) Nash equilibrium for the $N$-player game is related to a
fixed point of the map  $\bm{\calA}^*(\cdot)$.
To elaborate consider any tuple $(v^N, \ldots, v^N)\in\bm{\Ussm}^N$ that
corresponds to a
Nash equilibrium.
Let $\bm{\mu}=(\mu^{1}, \ldots, \mu^{N})$ be the corresponding invariant measures.
Solve equations \eqref{E5.07}--\eqref{E5.08} with above choice of ${\bm{\mu}}$.
Since $(v^1, \ldots, v^N)$ is a Nash equilibrium, it follows that $v^i$ is an
optimal Markov control in \eqref{E5.07}.
Thus $\bm{\mu}\in\bm{\calA}^*(\bm{\mu})$.
\end{remark}

\begin{remark}
Assumption~\ref{A5.2} is not very crucial for Theorem~\ref{T5.1}.
If $\breve{r}^i_{\bm{\mu}}$ is only continuous, then
the value function $V^i_{\bm\mu}$ is in
$\Sobl^{2, p}(\Rd), \, p\geq 1$ instead of
$\calC^2(\Rd)$, but the conclusion of Theorem~\ref{T5.1} still holds.
\end{remark}

In the rest of this section we discuss the convergence of the $N$-person
game as $N$ tends to infinity.
In what follows we work with Wasserstein metric instead of the metric
of weak convergence.
We also need some additional regularity assumptions on $r$,
which are as follows.

\begin{assumption}\label{A5.3}
the following hold:
\begin{itemize}
\item[(i)] There exists an inf-compact $\calV\in\calC^2(\Rd)$, such that 
for some positive constants $\gamma_3, \gamma_4$, 
\begin{equation}\label{again-US}
 L^u\calV(x) \; \le \; \gamma_4 - \gamma_{3}\, \abs{x}^q, 
 \quad \text{for all}\; u\in\Act,\quad \text{and}\; q>1,
\end{equation}
and for any compact $\calK\subset\calP(\Rd)$ with respect to the metric 
$\eD_{\bar{q}}$, ${\bar{q}}\in[1, q)$, we have 
$$\sup_{u\in\Act, \nu\in\calK} r(\cdot, u, \nu)\;\in\; \sorder(\abs{x}^q)\,.$$
Moreover, there exists  non-negative locally Lipschitz functions
$g^0\in\sorder(\abs{x}^q)$ and
$g^1\in\sorder(\abs{x}^q)$ satisfying
\begin{equation}\label{E5.14}
r\biggl(x,u, \frac{1}{N}\sum_{j=1}^N\delta_{y^j}\biggr)\; \le \; 
g^0(x) + \frac{1}{N}\sum_{j=1}^N g^1(y^j)
\quad \text{for all}\; u\in\Act, \quad N\ge 1\,,
\end{equation}
and 
\begin{equation}\label{hat-r}
(x,u)\;\mapsto\; \Hat{r}^N_{\bm\mu}(x,u)\; \df\; \int_{\R^{Nd}}
r\biggl(x,u, \frac{1}{N}\sum_{j=1}^N\del_{y^j}\biggr)\,
\prod_{j=1}^N \mu^j(\D{y^j})
\end{equation}
is continuous, and locally Lipschitz in $x$
(the Lipschitz constant might depend on $N$) uniformly in $u$, for all
$\bm{\mu}=(\mu^1,\dotsc, \mu^N)\in\bm\eH^N$.
\smallskip
\item[(ii)]
For some ${\bar{q}}\in[1, q)$ we have 
\begin{equation}\label{E5.16}
\abs{r(x,u, \nu)-r(x,u, \Tilde{\nu})}\; \le\; \kappa_R\,
\biggl(1+\int_{\Rd} \abs{y}^{\bar{q}} {\nu(\D{y})} + 
\int_{\Rd} \abs{y}^{\bar{q}} \Tilde{\nu}(\D{y})\biggr)^{1-\nicefrac{1}{\bar{q}}}
\, \eD_{\bar{q}}(\nu, \Tilde\nu)
\end{equation}
for all $\abs{x}\le R$, $u\in\Act$, and $R>0$.
For every $(x, u)\in\Rd\times\Act$ and $R>0$ there exists
a constant $\kappa^\prime$, depending on $x$, $u$, and $R$,
such that
\begin{equation}\label{E5.16.5}
\babss{r\biggl(x, u, \frac{1}{N}\sum_{j=1}^N \delta_{y^j}\biggr)
- r\biggl(x, u, \frac{1}{N-1}\sum_{j=1}^{N-1} \delta_{y^j}\biggr)}\;
\leq \kappa^\prime\, \frac{1}{N}, \quad \forall\; y^j\,\in B_{R}\,.
\end{equation}
\smallskip
\item[(iii)]
$\Act$ is a convex set and for all $R>0$ the following holds:
for any $\theta\in(0, 1)$  there exists $\kappa_{\theta, R}>0$,
such that
\begin{multline*}
\, b(x, \theta u + (1-\theta) u')\cdot p + r(x, \theta u + (1-\theta) u', \mu) 
\\[3pt]
\le \; \theta [ b(x,u )\cdot p + r(x,  u, \mu)] 
+ (1-\theta)[b(x,u')\cdot p + r(x,  u', \mu)]-\kappa_{\theta, R}
\end{multline*}
for all $u, u'\in\Act$, $\mu\in\calP(\Rd)$, and $\abs{x}, \abs{p}\le R$.
\end{itemize}
\end{assumption}

We note that \eqref{again-US} is the uniform stability condition
we have used before, and
\eqref{E5.16} is a Lipschitz property of the function
$r$ in the variable $\mu$.
Note also that for ${\bar{q}}\in[1, q)$, $\mu\mapsto r(x,u, \mu)$ is 
locally Lipschitz uniformly
with respect to $(x,u)$ in compact subsets of $\Rd\times\Act$. 
Assumption~\ref{A5.3}\,(iii) is a strict convexity condition that we need
in order to 
resolve the issue of non-uniqueness of the optimal control. 
Running costs considered in \cite{bardi-priuli, feleqi, lasry-lions} 
do satisfy this condition.

\begin{example}
Let $r(x, u, \mu) = R(x, u, \zeta(x, \mu))$ where
$R\colon\Rd\times\Act\times\R$ is a continuous function and for every compact
$K\subset\Rd$ there exists  constant $\gamma_K$ satisfying
\begin{equation}\label{Eg1}
\abs{R(x, u, z)-R(y, u, z_1)}\; \le \;
\gamma_K\,  (\abs{x-y} + \abs{z-z_1}) \quad \text{for all}\;  \; z,\, z_1\in\R,\;
\text{and}~(x, y)\in K\times K.
\end{equation}
Also suppose that for some $g_0\in\sorder(\abs{x}^q)$ we have
$$R(x, u, z) \leq g_0(x) + \kappa \abs{z} \qquad \forall \; x\in\Rd, \;  z\in\R\,,$$
for some constant $\kappa>0$, and that for some $\bar{q}\in[1, q)$ we have
$$ \zeta(x, \mu) \; =\; \int_{\Rd}\abs{x-y}^{\bar{q}}\, \D{\mu}\,.$$
Since $a^{\bar{q}}- b^{\bar{q}} \le {\bar{q}}\, (a^{\bar{q}-1}
+ b^{\bar{q}-1})\abs{a-b}$ for all $a, b\ge 0$ and 
${\bar{q}}\ge 1$, 
then, for any $\gamma\in\calP(\Rd\times\Rd)$ with marginals $\nu, \Tilde\nu$,
it holds that
\begin{align*}
\zeta(x,\nu)-\zeta(x,\Tilde\nu) &\le {\bar{q}}\int_{\Rd\times\Rd} 
(\abs{x}^{\bar{q}-1}+\abs{y}^{\bar{q}-1})\abs{x-y}\gamma(\D{x}, \D{y})
\\
&\le\kappa\biggl[\int_{\Rd\times\Rd} (1+\abs{x}^{\bar{q}}+\abs{y}^{\bar{q}})
\gamma(\D{x}, \D{y})\biggr]^{\frac{\bar{q}-1}{\bar{q}}}
\biggl[\int_{\Rd\times\Rd}
\abs{x-y}^{\bar{q}}\gamma(\D{x},\D{y})\biggr]^{\nicefrac{1}{\bar{q}}}.
\end{align*}
Since $\gamma$ is arbitrary, using \eqref{Eg1},
we deduce that $r$ satisfies \eqref{E5.16}. One can also show that \eqref{hat-r}
and \eqref{E5.16.5} are also satisfied.

\end{example}

Similar to \eqref{E5.5} we define
for $\bm{\mu}=(\mu^1_N,\dotsc, \mu^N_N)\in\bm\eH^N$,
\begin{equation*}
\Breve{r}^{i,N}_{\bm{\mu}}(x,u)\; \df\; 
\int_{\Rd\times\dotsb\times\Rd} 
\Breve{r}^i(x^1,\dotsc, x^{i-1}, x, x^{i+1},\dotsc, x^N)\, 
\prod_{j\neq i} \mu^j_N(\D{x^j}),
\quad 1\le i\le N. 
\end{equation*}

\begin{theorem}\label{T5.2}
Let Assumption~\ref{A5.3} hold. 
Let $\bm{\mu}=(\mu^1_N,\dotsc, \mu^N_N)$ be such that for 
 $\Tilde\varrho^i_N\in\R$, $V^i_N \in\calC^2(\Rd)$, $1\le i\le N$, we have
\begin{align}
\min_{u\in\Act}\;\bigl[L^u V^i_N(x) + \Breve{r}^{i,N}_{\bm\mu}(x,u)\bigr] 
&\;=\; L^{v^i_N} V^i_N(x) + \Breve{r}^{i,N}_{\bm{\mu}}(x,v^i_N)
\,=\, \Tilde{\varrho}^i_N,
\quad V^i_N(0)=0, 
\quad V^i_N\in\sorder(\calV),\label{E5.17}
\\[2mm]
\int_{\Rd} L^{v^i_N} f(x)\, \mu^i_N(\D{x})&\;=\;0 
\quad \text{for all~} f\in\calC^2_c(\Rd),\quad 1\le i\le N.\label{E5.18}
\end{align}
Then the following hold:
\begin{enumerate}
\item[{(a)}] $\{(\Tilde\varrho^i_N, V^i_N, \mu^i_N)\}_{i, N}$ 
is relatively compact in
$\R\times\Sobl^{2, p}(\Rd)\times(\calP_{\bar{q}}(\Rd), \eD_{\bar{q}})$ for
any $1\le p< \infty$;
\smallskip
\item[{(b)}] $\sup_{i, j} (\abs{\Tilde\varrho^i_N-\Tilde\varrho^j_N} 
+ \norm{V^i_N-V^j_N}_{\Sob^{2, p}(K)} + \eD_{\bar{q}}(\mu^i_N, \mu^j_N))\to 0$
as $N\to\infty$, for all compact subsets $K\subset\Rd$;
\smallskip
\item[{(c)}] any limit point $(\Tilde\varrho, V, \mu)$ 
of $\{(\Tilde\varrho^i_N, V^i_N, \mu^i_N)\}_{i, N}$ solves 
\begin{align}
\min_{u\in\Act}\;\bigl[L^u V(x) + r(x,u, \mu)\bigr] 
&\;=\; L^{v} V(x) + r(x,v, \mu)\;=\; \Tilde{\varrho}\,,
 \quad V(0)=0, 
\quad V\in\sorder(\calV),\label{E5.19}
\\[2mm]
\int_{\Rd} L^{v} f(x)\, \mu(\D{x})&\;=\;0
\quad \text{for all}\; f\in\calC^2_c(\Rd)\,.\label{E5.20}
\end{align}
\end{enumerate}
\end{theorem}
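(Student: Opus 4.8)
The plan is to combine uniform (in both the player index $i$ and the population size $N$) a priori estimates coming from the stability condition~\eqref{again-US} with a compactness argument, pass to the limit in the coupled system~\eqref{E5.17}--\eqref{E5.18}, and finally invoke uniqueness of the limiting ergodic MFG system to obtain the uniform-in-$i$ statement~(b). For the uniform bounds and part~(a): integrating~\eqref{again-US} against the invariant measure and using~\eqref{E5.18} gives $\sup_{i,N}\int_{\Rd}\abs{x}^q\,\mu^i_N(\D{x})<\infty$; since $\bar q<q$, the $\bar q$-th moments are uniformly integrable, so by Proposition~\ref{P-villani} the family $\{\mu^i_N\}_{i,N}$ is relatively compact in $(\calP_{\bar q}(\Rd),\eD_{\bar q})$. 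Together with~\eqref{E5.14} and~\eqref{E5.06}, this shows that the running costs $\Breve{r}^{i,N}_{\bm\mu}$ lie, uniformly in $i$ and $N$, in $\sorder(\abs{x}^q)$ and are dominated by a fixed locally integrable function. Feeding this into the estimates of \cite[Section~3.7]{ari-bor-ghosh} — in particular the stochastic representation formula for the ergodic value function and the bounds furnished by uniform stability — yields uniform bounds on $\Tilde{\varrho}^i_N$ and on $\norm{V^i_N}_{\Cc_{\calV}}$; then the local nondegeneracy of $a$ and the interior $\Sob^{2,p}$ estimates for~\eqref{E5.17} make $\{V^i_N\}_{i,N}$ relatively compact in $\Sobl^{2,p}(\Rd)$ for every $p<\infty$, hence in $\Cc^{1,\alpha}_{\mathrm{loc}}(\Rd)$ by Sobolev embedding. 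Since $\Usm$ is compact in the topology of Markov controls, assertion~(a) follows.

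The key step is a propagation-of-chaos estimate: for every compact $K\subset\Rd$,
$$\sup_{i\ne j}\;\sup_{(x,u)\,\in\,K\times\Act}\;
\babs{\Breve{r}^{i,N}_{\bm\mu}(x,u)-\Breve{r}^{j,N}_{\bm\mu}(x,u)}
\;\xrightarrow[N\to\infty]{}\;0\,.$$
This follows by coupling $\prod_{k\ne i}\mu^k_N$ with $\prod_{k\ne j}\mu^k_N$ synchronously on the $N-2$ shared coordinates and independently on the one coordinate in which they differ, so that the two empirical measures differ only by displacing a single atom of mass $(N-1)^{-1}$; then the $\eD_{\bar q}$-distance between them is at most $(N-1)^{-1/\bar q}\abs{a-b}$, and~\eqref{E5.16} combined with the uniform $\bar q$-moment bound makes the resulting difference of $r$ of order $N^{-1/\bar q}$ after integration. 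Granting this, suppose that along a subsequence of $N$ the tuples $(\Tilde{\varrho}^{i}_N,V^{i}_N,\mu^{i}_N,v^{i}_N)$ and $(\Tilde{\varrho}^{j}_N,V^{j}_N,\mu^{j}_N,v^{j}_N)$ have distinct limit points; after passing to a further subsequence everything converges, and by the display above both running costs converge to a common limit $\bar r$, so both limiting triples solve the ergodic HJB equation with running cost $\bar r$ together with the corresponding invariant-measure equation. By Assumption~\ref{A5.3}\,(iii) there is a unique measurable minimizing selector, so Theorem~\ref{T3.1} together with the uniqueness of the invariant probability measure of a stable diffusion forces the solution triple to be unique; the two limits therefore coincide, which is~(b).

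For part~(c), fix a subsequence along which $V^i_N\to V$ in $\Cc^{1,\alpha}_{\mathrm{loc}}(\Rd)$ and weakly in $\Sobl^{2,p}(\Rd)$, $\Tilde{\varrho}^i_N\to\Tilde{\varrho}$, $\mu^i_N\to\mu$ in $\eD_{\bar q}$, and $v^i_N\to v$ in the topology of Markov controls. By part~(b) all the $\mu^k_N$ share the limit $\mu$, so $\frac1{N-1}\sum_{k\ne i}\mu^k_N\to\mu$; a law of large numbers for the triangular array of independent variables with laws $(\mu^k_N)_{k\ne i}$, together with~\eqref{E5.16}, \eqref{E5.16.5} and the uniform moment bound, gives $\Breve{r}^{i,N}_{\bm\mu}(x,u)\to r(x,u,\mu)$ uniformly on compact subsets of $\Rd\times\Act$. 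One then passes to the limit in~\eqref{E5.17} — using the $\Cc^{1}_{\mathrm{loc}}$ convergence of $V^i_N$ and the uniform convergence of the running costs for the drift-plus-cost term, the weak $\Sobl^{2,p}$ convergence for the second-order term, and the stability of minimizers granted by the strict convexity in Assumption~\ref{A5.3}\,(iii) — to obtain~\eqref{E5.19}, and in~\eqref{E5.18}, via \cite[Lemma~3.2.6]{ari-bor-ghosh}, to obtain~\eqref{E5.20}; the normalization $V(0)=0$ and the growth bound $V\in\sorder(\calV)$ are inherited from the uniform estimates of the first paragraph. The step I expect to be the main obstacle is securing the bounds on $V^i_N$ in $\Cc_{\calV}$ that are uniform in \emph{both} $i$ and $N$: the costs $\Breve{r}^{i,N}_{\bm\mu}$ vary with $i$ and $N$ and are controlled only through~\eqref{E5.14} and~\eqref{E5.06}, so one has to use the representation formula for the ergodic value function and the uniform stability from~\eqref{again-US} with care; the rest is then a fairly routine compactness-and-uniqueness argument.
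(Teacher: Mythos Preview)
Your proposal is correct and shares the paper's overall architecture---uniform Lyapunov estimates for part~(a), closeness of the player-specific running costs for part~(b), then compactness plus identification of the limit for part~(c)---but the technical route in~(b) and~(c) is different. The paper introduces an auxiliary ``centered'' cost $\Hat{r}^{N}_{\bm\mu}$ (averaging over \emph{all} $N$ players) together with an auxiliary HJB pair $(W^{N},\lambda_{N})$, proves a truncation lemma (Lemma~\ref{L5.1}) to compare $\Breve{r}^{i,N}_{\bm\mu}$ with $\Hat{r}^{N}_{\bm\mu}$, and then shows constructively via the stochastic representation formula that each $V^{i}_{N}$ is close to $W^{N}$; the convergence of measures is then obtained by a separate contradiction argument exploiting strict convexity. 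You bypass the auxiliary problem entirely: your synchronous-coupling estimate gives $\sup_{i\ne j}\sup_{K\times\Act}|\Breve{r}^{i,N}_{\bm\mu}-\Breve{r}^{j,N}_{\bm\mu}|=O(N^{-1/\bar q})$ directly from~\eqref{E5.16} and H\"older, and then you use uniqueness of the limiting ergodic HJB (Theorem~\ref{T3.1} under~(H2), plus the unique minimizer from Assumption~\ref{A5.3}(iii)) to force any two subsequential limits to coincide. This is more economical and avoids Lemma~\ref{L5.1}; the paper's route, on the other hand, makes the common ``center'' $(W^{N},\lambda_{N})$ explicit and gives a slightly more quantitative picture. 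For part~(c) you invoke a triangular-array law of large numbers in $\eD_{\bar q}$, whereas the paper again truncates and then uses a Hewitt--Savage/Feleqi argument; both identifications work. Two small details you should make explicit: the upgrade from $\Cc^{1}_{\mathrm{loc}}$ convergence of $V^{i}_{N}-V^{j}_{N}$ to strong $\Sob^{2,p}(K)$ convergence in~(b) is not automatic from boundedness and requires subtracting the two HJB equations so that $a^{kl}\partial_{kl}(V^{i}_{N}-V^{j}_{N})$ is forced to zero in $L^{\infty}_{\mathrm{loc}}$ (this is exactly what the paper does at the end of its proof of~(b)); and to extract the common limit $\bar r$ of the running costs along a subsequence you need equicontinuity of $\{\Breve{r}^{i,N}_{\bm\mu}\}_{i,N}$ on compacts, which the paper proves via truncation~\eqref{E5.37} but which also follows from your coupling argument combined with the uniform continuity of $r$ on $K\times\Act\times\{\nu:\int|y|^{\bar q}\nu\le C\}$.
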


We see that \eqref{E5.19}--\eqref{E5.20} defines a MFG solution in the sense of 
Definition~\ref{DMFG2}.
Theorem~\ref{T5.2} asserts that the limits
of N-player games are solutions to mean field games.
Similar results are also 
obtained in \cite{lasry-lions, feleqi} in the case of a compact state space. 
One of the key ideas to prove Theorem~\ref{T5.2} is to use \eqref{again-US} 
to show that one can consider compact subsets of $\Rd$ to
approximate integrals.
This is done following the method in 
\cite[Corollary~5.13]{cardaliaguet}.
To accomplish this we introduce the projection map
\begin{equation*}
\mathfrak{P}(x)\;=\;\mathfrak{P}_R(x)\;\df\; \begin{cases}
x & \text{if}\; x\in\Bar{B}_R(0)\,,
\\[3pt]
0 & \text{otherwise}.
\end{cases}
\end{equation*}
Also define $\Tilde\mu^i_N (B) \;=\; \mu^i_N(\mathfrak{P}^{-1}(B))$ for 
$B\in\calB(\Rd)$. Then we have the following result.
\begin{lemma}\label{L5.1}
Let $\eps>0$ be given. 
Then for any compact set $C\subset\Rd$, there exists $R>0$, 
such that
\begin{equation*}
\sup_{(x,u)\in C\times\Act}\;
\bigl|\Hat{r}^{N}_{\bm\mu}(x,u)- \Hat{r}^{N}_{\Tilde{\bm\mu}}(x,u)\bigr|
\; \le\;\varepsilon\qquad\forall\,N\ge 1\,,
\end{equation*}
where $\Hat{r}^{N}_{\bm\mu}$ is given by \eqref{hat-r}.
\end{lemma}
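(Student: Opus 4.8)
The plan is to exploit the uniform $q$-th moment bound furnished by \eqref{again-US} together with the Wasserstein--Lipschitz estimate \eqref{E5.16}. First I would record that, since each component $\mu^j$ of any $\bm\mu\in\bm\eH^N$ is the invariant probability measure of some stable stationary Markov control $v^j$, integrating \eqref{again-US} against $\mu^j$ (via the standard truncation argument applied to $\calV$, as in \cite[Lemma~3.7.2]{ari-bor-ghosh}) gives $0=\int L^{v^j}\calV\,\D\mu^j\le\gamma_4-\gamma_3\int_{\Rd}\abs{y}^q\,\mu^j(\D y)$, hence $\int_{\Rd}\abs{y}^q\,\mu^j(\D y)\le\gamma_4/\gamma_3$ for every $j$, uniformly over all $\bm\mu\in\bm\eH^N$ and all $N\ge1$; since $\abs{y}^{\bar q}\le1+\abs{y}^q$ the $\bar q$-th moments are also bounded by $1+\gamma_4/\gamma_3$. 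Together with $g^0,g^1\in\sorder(\abs{x}^q)$ and \eqref{E5.14}, this shows in particular that the integrals defining $\Hat{r}^{N}_{\bm\mu}$ and $\Hat{r}^{N}_{\Tilde{\bm\mu}}$ in \eqref{hat-r} are finite. Fix the compact set $C$, choose $R_1>0$ with $C\subset\Bar{B}_{R_1}$, and let $\kappa\df\kappa_{R_1}$ be the constant from \eqref{E5.16}, which then serves for every $x\in C$, uniformly in $u\in\Act$ and independently of the projection radius $R$.

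Next I would realize the difference $\Hat{r}^{N}_{\bm\mu}-\Hat{r}^{N}_{\Tilde{\bm\mu}}$ by a coupling: let $Y^1,\dots,Y^N$ be independent with $Y^j$ distributed as $\mu^j$, so that $\mathfrak{P}_R(Y^j)$ is distributed as $\Tilde\mu^j$ and
\begin{equation*}
\Hat{r}^{N}_{\bm\mu}(x,u)-\Hat{r}^{N}_{\Tilde{\bm\mu}}(x,u)
\;=\;\Exp\biggl[r\Bigl(x,u,\tfrac1N\sum_{j=1}^{N}\delta_{Y^j}\Bigr)
-r\Bigl(x,u,\tfrac1N\sum_{j=1}^{N}\delta_{\mathfrak{P}_R(Y^j)}\Bigr)\biggr]\,.
\end{equation*}
Applying \eqref{E5.16} pathwise with $\nu=\frac1N\sum_j\delta_{Y^j}$ and $\Tilde\nu=\frac1N\sum_j\delta_{\mathfrak{P}_R(Y^j)}$, and using the elementary bounds $\int\abs{y}^{\bar q}\,\Tilde\nu(\D y)\le\int\abs{y}^{\bar q}\,\nu(\D y)=\frac1N\sum_j\abs{Y^j}^{\bar q}$ and $\eD_{\bar q}(\nu,\Tilde\nu)^{\bar q}\le\frac1N\sum_{j:\,\abs{Y^j}>R}\abs{Y^j}^{\bar q}$ (the latter via the transport plan $y\mapsto\mathfrak{P}_R(y)$, which moves mass only where $\abs{y}>R$, over a distance $\abs{y}$), I would bound the integrand above pathwise by
\begin{equation*}
\kappa\,\biggl(1+\frac2N\sum_{j=1}^{N}\abs{Y^j}^{\bar q}\biggr)^{1-\nicefrac1{\bar q}}
\biggl(\frac1N\sum_{j:\,\abs{Y^j}>R}\abs{Y^j}^{\bar q}\biggr)^{\nicefrac1{\bar q}}\,.
\end{equation*}

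Finally I would take expectations and apply Hölder's inequality with exponents $\bar q/(\bar q-1)$ and $\bar q$, giving $\Exp[A^{1-1/\bar q}B^{1/\bar q}]\le\Exp[A]^{1-1/\bar q}\Exp[B]^{1/\bar q}$ for $A=1+\frac2N\sum_j\abs{Y^j}^{\bar q}$ and $B=\frac1N\sum_{j:\abs{Y^j}>R}\abs{Y^j}^{\bar q}$. By the moment bound, $\Exp[A]\le C_1\df3+2\gamma_4/\gamma_3$, independently of $N$ and of $\bm\mu$; for $B$, Markov's inequality gives $\abs{Y^j}^{\bar q}\Ind_{\{\abs{Y^j}>R\}}\le R^{-(q-\bar q)}\abs{Y^j}^q$, so $\Exp[B]=\frac1N\sum_j\Exp\bigl[\abs{Y^j}^{\bar q}\Ind_{\{\abs{Y^j}>R\}}\bigr]\le R^{-(q-\bar q)}\gamma_4/\gamma_3$, again uniformly in $N$ and $\bm\mu$. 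Hence
\begin{equation*}
\sup_{(x,u)\in C\times\Act}\babs{\Hat{r}^{N}_{\bm\mu}(x,u)-\Hat{r}^{N}_{\Tilde{\bm\mu}}(x,u)}
\;\le\;\kappa\,C_1^{\,1-\nicefrac1{\bar q}}\,\biggl(\frac{\gamma_4}{\gamma_3\,R^{\,q-\bar q}}\biggr)^{\nicefrac1{\bar q}}\qquad\forall\,N\ge1\,,
\end{equation*}
and since $q>\bar q$ the right-hand side $\to0$ as $R\to\infty$; taking $R\ge R_1$ large enough makes it $\le\eps$, which is the claim. The one step requiring genuine care is the uniformity in $N$: the number of coordinates with $\abs{Y^j}>R$ is typically of order $N$, so a naive bound on the difference of empirical measures would blow up. It is precisely the sublinear growth $r_\mu\in\sorder(\abs{x}^q)$, reflected in \eqref{E5.16} through the moment prefactor carrying the power $1-1/\bar q$ rather than a full power, combined with the $N$-independent moment bound extracted from \eqref{again-US}, that makes the Hölder estimate survive the passage $R\to\infty$ uniformly in $N$.
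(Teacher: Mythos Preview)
Your proof is correct and follows essentially the same route as the paper's own argument: both use the uniform $q$-th moment bound from \eqref{again-US}, apply the Wasserstein--Lipschitz estimate \eqref{E5.16} to the empirical measures built from $(y^j)$ and $(\mathfrak{P}_R(y^j))$, control the Wasserstein distance via the natural transport plan, and then use H\"older together with the tail bound $\int_{B_R^c}\abs{y}^{\bar q}\,\D\mu\le R^{-(q-\bar q)}\int\abs{y}^{q}\,\D\mu$ to obtain a rate $\sim R^{-(q-\bar q)/\bar q}$ uniform in $N$. The only cosmetic difference is your use of probabilistic coupling language (independent $Y^j$ with law $\mu^j$) in place of the paper's direct integrals over $\prod_j\mu^j_N$.
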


\begin{proof}
We claim that for any $x^j, y^j\in\Rd$, $1\le j\le N$, we have
\begin{equation}\label{E5.21}
\textstyle\eD_{\bar{q}}\Bigl(\frac{1}{N}\sum_{j=1}^N\delta_{x^j},
\frac{1}{N}\sum_{j=1}^N\delta_{y^j}\Bigr)\;\le\;
\Bigl(\frac{1}{N}\sum_{i=1}^N \abs{x^j-y^j}^{\bar{q}}\Bigr)^{\nicefrac{1}{\bar{q}}}\,.
\end{equation}
Indeed, this can be obtained by choosing
$\nu(\D{x}, \D{y})\df \frac{1}{N}\sum_{j=1}^N \del_{(x^j, y^j)}$ in \eqref{EDp}.
Using \eqref{again-US} we can find a constant $\kappa_{1}$ such that
\begin{equation}\label{E5.22}
\sup_{\nu\in\eH}\;\int_{\Rd} \abs{x}^q \, \nu(\D{x})\; \le\; \kappa_{1}\,.
\end{equation} 
Then
\begin{align*}
& \bigl|\Hat{r}^{N}_{\bm\mu}(x,u)- \Hat{r}^{N}_{\Tilde{\bm\mu}}(x,u)\bigr|
\\
&\quad=\; \biggl|\int_{\R^{Nd}} 
\biggl[r\biggl(x,u, \frac{1}{N}\sum_{j=1}^N\delta_{y^j}\biggr)
-  r\biggl(x,u, \frac{1}{N}\sum_{j=1}^N\delta_{\mathfrak{P}(y^j)}\biggr)\biggr]
\prod_{j=1}^N \mu^j_N(\D{y^j})\biggr|
\\
&\quad\le\; \kappa_C \biggl|\int_{\R^{Nd}}
\biggl[\biggl(1+ \frac{1}{N}\sum_{j=1}^N 
\bigl(\abs{y^j}^{\bar{q}}
+ \abs{\mathfrak{P}(y^j)}^{\bar{q}} \bigr)\biggr)^{\frac{\bar{q}-1}{\bar{q}}}
\, \eD_{\bar{q}}\biggl(\frac{1}{N}\sum_{j=1}^N\delta_{y^j}, \frac{1}{N}\sum_{j=1}^N
\delta_{\mathfrak{P}(y^j)}\biggr) \biggr]\,\prod_{j=1}^N \mu^j_N(\D{y^j})\biggr|
\\
&\quad\le\; \kappa_C \biggl|\int_{\R^{Nd}}
\biggl(1+ \frac{1}{N}\sum_{j=1}^N
\bigl(\abs{y^j}^{\bar{q}} + \abs{\mathfrak{P}(y^j)}^{\bar{q}} \bigr) \biggr)\,
\prod_{j=1}^N \mu^j_N(\D{y^j})\biggr|^{\frac{\bar{q}-1}{\bar{q}}}
\\
&\mspace{250mu} \times\; \biggl|\int_{\R^{Nd}} \biggl[\eD_{\bar{q}}\biggl(\frac{1}{N}
\sum_{j=1}^N\delta_{y^j}, \frac{1}{N}\sum_{j=1}^N\delta_{\mathfrak{P}(y^j)}\biggr)
\biggr]^{\bar{q}} \prod_{j=1}^N \mu^j_N(\D{y^j})
\biggr|^{\nicefrac{1}{\bar{q}}}
\\
&\quad\le\; \kappa_{2} \biggl|\int_{\R^{Nd}} \frac{1}{N}\sum_{j=1}^N 
\abs{y^j-\mathfrak{P}(y^j)}^{\bar{q}}
\prod_{j=1}^N \mu^j_N(\D{y^j})\biggr|^{\nicefrac{1}{\bar{q}}}
\\
&\quad\le\; \kappa_{2} \biggl| \frac{1}{N}\sum_{i=1}^N \int_{B_R^c(0)}
\abs{y^j}^{\bar{q}}  \mu^j_N(\D{y^j})\biggr|^{\nicefrac{1}{\bar{q}}}
\\
&\quad\le\; \kappa_3 \frac{1}{R^{q-{\bar{q}}}}\
\qquad\forall\, (x,u)\in C\times\Act\,,
\end{align*}
for some constants $\kappa_{2}, \kappa_3$, where in the third line we use 
\eqref{E5.16}, in the fourth line we use the H\"{o}lder inequality, 
\eqref{E5.21} is used in the fifth line, and in the last line we use \eqref{E5.22}. 
Choosing $R$ large enough completes the proof.
\end{proof}

\begin{proof}[Proof of Theorem~\ref{T5.2}.]\,
Since $(g^0\vee g^1)(x)\le \kappa_{2}(1+ \abs{x}^q)$ we obtain from 
\eqref{E5.14} that 
\begin{equation}\label{E5.23}
\Breve{r}^{i,N}_{\bm\mu}(x,u)\; \le\; g^0(x) 
+ \frac{\kappa_{2}}{N-1}\sum_{j=1}^{N-1}\int_{\Rd} (1+ \abs{x}^q)\, \D{\mu^j_N}
\;\le \; g^0(x) + \kappa_{2}(1+\kappa_{1})\,,
\end{equation}
where we also use \eqref{E5.22}.
Recall that $\tc_r$ denotes the hitting time to the ball $B_r(0)$ and 
$\uptau_R$ is the exit time from the ball $B_R(0)$. 
From \cite[Lemma~3.3.4]{ari-bor-ghosh} and \eqref{again-US} we know that 
$\sup_{v\in\Ussm}\Exp_x[\tc_r]<\infty$ for $r>0$. Therefore using It\^{o}'s formula
in \eqref{again-US} we obtain that for $r>0$,
\begin{equation}\label{E5.24}
\limsup_{R\to\infty}\sup_{v\in\Ussm}\Exp^v_x\biggl[\calV(X_{\tc_r\wedge\uptau_R}) 
+ \gamma_3\int_0^{\tc_r\wedge\uptau_R}\abs{X_t}^q
\D{t}\biggr]\;\le\; \calV(x) + \kappa_3
\end{equation}
for some constant $\kappa_3$.
Since $V^i_N\in\sorder(\calV)$, for every $\eps>0$, there exists $\kappa_\eps$ 
satisfying $V^i_N(x) \le \kappa_\eps + \eps \calV(x)$.
Therefore, using \eqref{E5.24} we obtain
\begin{align*}
\limsup_{R\to\infty}\;\sup_{v\in\Ussm}\Exp_x\bigl[\Ind_{\{\uptau_R<\tc_r\}}
V^i_N(X_{\uptau_R})\bigr]&\;\le\;
\eps\, \limsup_{R\to\infty}\;\sup_{v\in\Ussm}\Exp_x\bigl[\Ind_{\{\uptau_R<\tc_r\}}
\calV(X_{\uptau_R})\bigr]\\[5pt]
&\;\le\; \eps(\calV(x) + \gamma_4)\,.
\end{align*}
Since $\eps$ is arbitrary, we have $\limsup_{R\to\infty}\sup_{v\in\Ussm}\Exp_x
\bigl[\Ind_{\{\uptau_R<\tc_r\}}V^i_N(X_{\uptau_R})\bigr]=0$.
Thus applying It\^{o}'s lemma to \eqref{E5.17} we have
\begin{equation}\label{E5.25}
V^i_N(x)
\;=\;\inf_{v\in\Ussm}\Exp_x^v\biggl[\int_0^{\tc_r}
\big(\Breve{r}^{i,N}_{\bm\mu}(X_t, v(X_t))
-\Tilde{\varrho}^i_N\big)\, \D{t} + V^i_N(X_{\tc_r})\biggr]\,,
\quad x\in B^c_r(0)\,,~ r>0\,.
\end{equation}
A similar argument as in \eqref{E5.24} gives us
\begin{equation}\label{E5.26}
\limsup_{T\to\infty}\sup_{v\in\Ussm}\Exp^v_x\biggl[\frac{1}{T}\calV(X_{T}) 
+ \frac{\gamma_3}{T}\int_0^{T}\abs{X_t}^q
\D{t}\biggr]\;\le\; \gamma_4\,.
\end{equation}
Therefore using \eqref{E5.26} and the fact that $V^i_N\in\sorder(\calV)$ we have 
$\lim_{T\to\infty}\frac{1}{T}\sup_{v\in\Ussm}\Exp_x[V^i_N(X_T)]=0$.
Applying Dynkin's theorem to \eqref{E5.17}, and using \eqref{E5.23} and 
\eqref{E5.26}, we obtain
\begin{align*}
\Tilde{\varrho}^i_N &=\; \lim_{T\to\infty}\frac{1}{T}\Exp_x^{v^i_N}
\biggl[\int_0^T \Breve{r}^{i,N}_{\bm\mu}(X_t, v^i_N(X_t))\, \D{t}\biggr]\\[3pt]
& \le\; \lim_{T\to\infty}\frac{1}{T}\Exp_x^{v^i_N}\biggl[\int_0^T g^0(X_t)\, \D{t}
\biggr] + \kappa_{2}(1+\kappa_{1})\\[3pt]
&\le\; \kappa_4
\end{align*}
for some constant $\kappa_4$, independent of $i$ and $N$. This shows that 
$\{\Tilde{\varrho}^i_N\}_{i, N}$ is relatively compact. By \eqref{E5.26} 
and Proposition~\ref{P-villani}
we see that $\{\mu^{i}_N\}_{i, N}$ is relatively compact in
$(\calP_{\bar{q}}(\Rd), \eD_{\bar{q}})$. 
Next we prove the compactness of $\{V^i_N\}_{i, N}$. Define
$$V^{\alpha}_{i, N}(x)\; \df\; \inf_{U\in\Uadm}\Exp^U_x
\biggl[\int_0^\infty e^{-\alpha t}\;  
\Breve{r}^{i,N}_{\bm\mu}(X_t, U_t)\, \D{t}\biggr]\,.$$
It is shown in \cite[Theorem~3.7.12]{ari-bor-ghosh} that 
$V^{\alpha}_{i,N}(x)-V^{\alpha}_{i,N}(0)$ are locally bounded in 
$\Sobl^{2, p}(\Rd)$ and converge to $V^i_N$, as $\alpha\to 0$,
in $\Sobl^{2, p}(\Rd)$, $p\geq 1$.
By \eqref{E5.23} we have that $\Breve{r}^{i,N}_{\bm\mu}$ are locally bounded 
uniformly in $N$ and $i\in\{1,\dotsc, N\}$. 
Thus applying \cite[Lemma~3.6.3]{ari-bor-ghosh}
we obtain that for any $R>0$ there exists a constant $\varpi_R$, 
independent of $i, N$, such that
\begin{equation}\label{E5.27}
\norm{V^{\alpha}_{i,N}(\cdot)-V^{\alpha}_{i,N}(0)}_{\Sob^{2, p}(B_R(0))}
\; \le \; \varpi_R \Bigl(1 + \alpha \sup_{B_R}V_\alpha^{i,N}\Bigr)\,.
\end{equation}
Since $g^0\in\sorder(h)$, using \eqref{E5.26} and \eqref{again-US} 
it is easy to see that
$$\sup_{x\in B_R(0)} \alpha V^{i, N}_\alpha(x) \;\le\; \widehat\varpi_R$$
for some constant $\widehat\varpi_R$.
Thus using \eqref{E5.27} we have 
$\norm{V^{\alpha}_{i,N}(\cdot)-V^{\alpha}_{i,N}(0)}_{\Sob^{2, p}(B_R(0))}
\;\le\; \varpi_R\bigl(1+\widehat\varpi_R\bigr)$, which  gives 
\begin{equation}\label{E5.28}
\norm{V^i_N}_{\Sob^{2, p}(B_R(0))}\; \le\; \varpi_R\bigl(1+\widehat\varpi_R\bigr)\,,
\end{equation}
with $\varpi_R$, $\widehat\varpi_R$ do not depending on $i, N$.
Since $V^i_N(0) =0$, we have
$$\sup_{B_{1}(0)}\abs{V^i_N(x)}\; \le \kappa_5$$
for some constant $\kappa_5$.
By \cite[Lemma~3.7.2]{ari-bor-ghosh} we have
$$x\mapsto \sup_{v\in\Ussm}\Exp_x\biggl[\int_0^{\tc_{1}} (1+g^0(X_t))\,\D{t}\biggr]
\in \sorder(\calV)\,.$$
Thus from \eqref{E5.23} and \eqref{E5.25} we obtain that $\sup_{x\in B_{R}(0)}
\abs{V^i_N(x)}\; \le \kappa_R$ where $\kappa_R$ is independent of $i, N$.
Therefore using standard elliptic regularity
theory in \eqref{E5.17} we deduce that $\{V^i_N\}$ is
bounded in $\Sobl^{2, p}(\Rd)$.
This completes the proof of part (a).

Next we prove part (b). Recall the definition of $\Hat{r}^N$ from \eqref{hat-r}.
Consider the unique solution $W^N\in\calC^2(\Rd)$ of the
equation
\begin{equation}\label{E5.29}
\min_{u\in\Act}\;\bigl[L^u W^N(x) + \Hat{r}^N_{\bm\mu}(x,u)\bigr]\; =\;
\lambda_N, \quad W^N(0)=0, \quad W^N\in\sorder(\calV).
\end{equation}
For existence and uniqueness of $W^N$
we refer the reader to \cite[Theorem~3.7.12]{ari-bor-ghosh}.
From \eqref{E5.17} and \eqref{E5.29} we have the following characterizations
\begin{align*}
\Tilde{\varrho}^i_N &\;=\; \min_{\uppi\in\eG}\;\int_{\Rd\times\Act}
\Breve{r}^{i,N}_{\bm{\mu}}(x,u)\,\uppi(\D{x},\D{u})\,,\\
\lambda_N &\;=\; \min_{\uppi\in\eG}\;\int_{\Rd\times\Act}
\Hat{r}^N_{\bm\mu}(x,u)\,
\uppi(\D{x},\D{u})\,.
\end{align*}

It is easy to see that $\Hat{r}^N$ satisfies a similar estimate as \eqref{E5.23}
for all $x$ and $u$. Using \eqref{E5.22}--\eqref{E5.23} we see that for any $\eps>0$,
we can find $R>0$
large enough satisfying
\begin{equation}\label{E5.30}
\sup_{\uppi\in\eG}\;\int_{B_R^c(0)\times\Act}
\Breve{r}^{i,N}_{\bm{\mu}}(x,u)\,\uppi(\D{x},\D{u})
+ \sup_{\uppi\in\eG}\;\int_{B_R^c(0)\times\Act}
\Hat{r}^{N}_{\bm\mu}(x,u)\,\uppi(\D{x},\D{u})
\; \le \; \varepsilon\,.
\end{equation}
Recall the projection map $\mathfrak{P}=\mathfrak{P}_R$ and
$\Tilde\mu^i_N = \mu^i_N\circ \mathfrak{P}^{-1}$.
By Lemma~\ref{L5.1}, for each $\eps>0$,
there exists $R_{1}>0$  such that
\begin{equation}\label{E5.31}
\begin{aligned}
\sup_{(x,u)\in B_R\times\Act}\;\bigl|\Hat{r}^{N}_{\bm\mu}(x,u)
- \Hat{r}^{N}_{\Tilde{\bm\mu}}(x,u)\bigr| &\; \le \; \eps\,,\\
\sup_{(x,u)\in B_R\times\Act}\;\bigl|\Breve{r}^{i,N}_{\bm{\mu}}(x,u)
- \Breve{r}^{i,N}_{\Tilde{\bm\mu}}(x,u)\bigr| &\; \le \; \eps\,.
\end{aligned}
\end{equation}

It is also easy to see that for any
$\{y^j\}_{j\ge 1}\subset \Bar{B}_{R_{1}}(0)$ we have
\begin{align*}
\eD_p\biggl(\frac{1}{N-1}\sum_{j\neq i}\del_{y^j},
\frac{1}{N}\sum_{j=1}^N\del_{y^j}\biggr)
&\;\le\; \biggl[\eD_{1}\biggl(\frac{1}{N-1}\sum_{j\neq i}\del_{y^j},
\frac{1}{N}\sum_{j=1}^N\del_{y^j}\biggr)\biggr]^{\frac{1}{\bar{q}}}\,
(2R)^{\nicefrac{\bar{q}-1}{\bar{q}}}\\
&\;\le\; \frac{4R}{N^{\nicefrac{1}{\bar{q}}}},,
\end{align*}
which gives, by \eqref{E5.16}, that
\begin{align}\label{E5.32}
\sup_{x\in B_R, u\in\Act}\;&\bigl| \Breve{r}^{i,N}_{\Tilde{\bm\mu}}(x,u)
-\Hat{r}^{N}_{\Tilde{\bm\mu}}(x,u)\bigr|\nonumber
\\[2mm]
&\qquad=\sup_{x\in B_R, u\in\Act}\;\biggl| \int_{\R^{Nd}}
\biggl[r\biggl(x,u, \frac{1}{N-1}\sum_{j\neq i}\del_{y^j}\biggr)
- r\biggl(x,u, \frac{1}{N}\sum_{j=1}^N\del_{y^j}\biggr)\biggr]\,
\prod_{j=1}^N \Tilde{\mu}^j(\D{y^j})\biggr|\nonumber
\\[5pt]
&\qquad\le \frac{\kappa_{1}}{N^{\nicefrac{1}{\bar{q}}}}
\end{align}
for some constant $\kappa_{1}$, which depends on $R_{1}$ but not on $N$.
Thus combining \eqref{E5.32} with \eqref{E5.30} and \eqref{E5.31}
we obtain 
$\sup_{i, j}\abs{\Tilde{\varrho}^i_N-\lambda_N}\to 0$ as $N\to\infty$. 
An argument similar to \eqref{E5.25} and \eqref{E5.28} also gives
\begin{equation}\label{E5.33}
\begin{gathered}
W_N(x) = \inf_{v\in\Ussm}\Exp_x^v\biggl[\int_0^{\tc_r} 
\big(\Hat{r}^{N}_{\bm\mu}(X_t, v(X_t))-\lambda_N\big)\,\D{t}
+ W_N(X_{\tc_r})\biggr], 
\quad r>0\,.
\\[2mm]
\norm{W_N}_{\Sob^{2, p}(B_R(0))}  \le \varpi_{1},\quad p\in[1, \infty),
\end{gathered}
\end{equation}
for some constant $\varpi_{1}$ independent of $N$.
Therefore, by \eqref{E5.25}, \eqref{E5.28} and \eqref{E5.33},
for every $\eps>0$ we can find $r>0$ small enough such that
\begin{align}\label{E5.34}
|V^i_n(x) - W_n(x)|\;\le \sup_{v\in\Ussm} \Exp_x^v\biggl|\int_0^{\tc_r} 
\Bigl(\Breve{r}^{i,N}_{\bm\mu}(X_t, v(X_t))
-\Hat{r}^N_{\bm\mu}(X_t, v(X_t))+\lambda_N-\Tilde{\varrho}^i_N\Bigr)\, 
\D{t}\biggr| + \eps\,.
\end{align}
Equations \eqref{E5.31}--\eqref{E5.32} imply that 
$|\Breve{r}^{i,N}_{\bm\mu}(x,u)-\Hat{r}^N_{\bm\mu}(x,u)|\to 0$ 
as $N\to\infty$ uniformly on compact subsets of $\Rd\times\Act$. 
Since $g^0\in\sorder(\abs{x}^q)$, using \eqref{E5.24} 
(together with Fatou's lemma), we obtain
$$\limsup_{R\to\infty}\;\sup_{v\in\Ussm}\Exp^v_x\biggl[\int_0^{\tc_r}
\Ind_{B^c_R(0)}(X_t) g^0(X_t)\, \D{t}\biggr]\; =\; 0$$
uniformly in $x$ belonging to compact subsets of $\Rd$. 
It follows by \eqref{E5.34} that
$$\sup_{i}\;\norm{V^i_N-W_N}_{L^\infty(B_R)}\;\xrightarrow[N\to\infty]{}\; 0\,.$$
As earlier, using \eqref{E5.33} we can show that $\{W_N\}$ is bounded in 
$\Sobl^{2, p}(\Rd)$, $p\ge 1$. 
Thus $\{V^i_N-W_N\}$ is bounded in $\Sobl^{2, p}(\Rd)$, $p\ge 1$, which implies
the
local compactness of $\{V^i_N-V^j_N\}$ in $\Sobl^{2, p}(\Rd)$, $p\ge 1$, and
clearly $V^i_N-V^j_N\to0$ in $\Sobl^{1, \infty}(\Rd)$ as $N\to\infty$.
Therefore from 
\eqref{E5.17} and \eqref{E5.29} we obtain that
$$\frac{1}{2}\trace \bigr(a(x) \grad^2(V^i_N-W_N)(x)\bigr)\;=\; f^i_N(x)\,,$$
where $f^i_N\to 0$ in $L^\infty_{\textrm{loc}}(\Rd)$
uniformly in $1\leq i\leq N$. Thus using
standard results of elliptic pde we get that $\{V^i_N-W_N\}$ converges to $0$
in $\Sobl^{2, p}(\Rd)$, $p\ge 1$,
uniformly in $1\leq i\leq N$.
Hence $\{V^i_N-V^j_N\}$ converges to $0$ in $\Sobl^{2,p}(\Rd)$, $p\ge 1$,
uniformly in $1\leq i, j\leq N$.

Next we show that $\sup_{i, j}\eD_p(\mu^i_N, \mu^j_N)\to 0$ as $N\to\infty$. 
Due to Proposition~\ref{P-villani} and \eqref{E5.22} it is enough to show that
$\sup_{i, j}\;\dd(\mu^i_N, \mu^j_N)\to 0$ as $N\to\infty$. 
Let $w_N$ be the continuous selector from the minimizer of \eqref{E5.29},
and $\mu_{w_N}$ be the corresponding invariant
measure. We show that
\begin{equation}\label{E5.36}
\sup_{i\in\{1,\dotsc, N\}} \dd(\mu^i_N, \mu_{w_N})\;\xrightarrow[N\to\infty]{}\; 0\,. 
\end{equation}
By Assumption~\ref{A5.3}
it follows that
$u\mapsto L^u V^i_N(x) + \Breve{r}^{i,N}_{\bm\mu}(x,u)$ 
is a strictly convex function. Therefore there exists a unique continuous measurable
selector $v^i_N:\Rd\to \Act$ from the minimizer in \eqref{E5.17}. 
Also, $\mu^i_N$ is the unique invariant probability measure 
corresponding to $v^i_N$ by \eqref{E5.18}. We claim that 
\begin{equation}\label{E5.37}
(x,u)\;\mapsto\; \Breve{r}^{i,N}_{\bm\mu}(x,u) \quad 
\text{is equicontinuous on compact subsets of }\; \Rd\times\Act\,.
\end{equation}
To show \eqref{E5.37} we use continuity of $r$ on
$\Rd\times\Act\times\calP_{\bar{q}}(\Rd)$. 
We consider the set $C\times\Act$ where $C$ is a compact subset of $\Rd$. 
Let $\eps>0$ be given. Then using Lemma~\ref{L5.1} we can find $R>0$ and 
the projected measures $\Tilde{\mu}^i_N$ such that
\begin{align}\label{E5.38}
\sup_{(x,u)\in C\times\Act}\;
\bigl|\Hat{r}^{N}_{{\bm\mu}}(x,u)
- \Hat{r}^{N}_{\Tilde{\bm\mu}}(x,u)\bigr|\; \le\; \frac{\eps}{4}
\qquad \forall\,  N\ge 1\,.
\end{align}
Since $\calP(\Bar{B}_R(0))$ is a compact set, using the continuity of $r$,
we can find $\delta>0$ such that
\begin{equation}\label{E5.39}
|r(x,u, \mu)-r(\Bar{x}, \Bar{u}, \mu)|\; \le\; \frac{\varepsilon}{4}\,, \quad 
\text{whenever} \; \abs{x-\Bar{x}} + d_\Act(u, \Bar{u})\le \delta, \; 
\text{and}\; x, \Bar{x}\in C\,.
\end{equation}
Thus using \eqref{E5.39} we obtain 
\begin{align*}
\bigl| \Hat{r}^{N}_{\Tilde{\bm\mu}}(x,u)-
\Hat{r}^{N}_{\Tilde{\bm\mu}}(\Bar{x},\Bar{u})\bigr|
&\;=\; \biggl| \int_{\Bar{B}_{R}^{Nd}}
r\biggl(x,u,\frac{1}{N}\sum_{j=1}^N\delta_{y^j}\biggr)\,
\prod_{j=1}^n \Tilde{\mu}^j_N(\D{y^j})\\[3pt]
&\mspace{200mu} -\int_{\Bar{B}_{R}^{Nd}}
r\biggl(\Bar{x}, \Bar{u}, \frac{1}{N}\sum_{j=1}^N\delta_{y^j}\biggr)\,
\prod_{j=1}^n \Tilde{\mu}^j_N(\D{y^j})\biggr|\\
&\;\le\; \eps/4\,,
\end{align*}
whenever $\abs{x-\Bar{x}} + d_\Act(u, \Bar{u})\le \delta$  and  
$x, \Bar{x}\in C$. Combining this with \eqref{E5.38} we establish \eqref{E5.37}. 
This also
shows that 
$$(x,u)\;\mapsto\; \Hat{r}^{N}_{\bm\mu}(x,u) \quad 
\text{is equicontinuous on compact subsets of }\; \Rd\times\Act\,.$$
Suppose that \eqref{E5.36} is not true. 
Then for $\eps>0$ we can find a subsequence $N_k$ and $i_k\in{1,\dotsc, N_k}$ 
such that
\begin{equation}\label{E5.40}
\dd(\mu^{i_k}_{N_k}, \mu_{w_{N_k}})\;\ge\; \eps \; >\; 0 \quad \text{for all}\; N_k\,.
\end{equation}
We can further chose a subsequence of $\{N_k, i_k\}$, relabel it with the 
same indices, such that the following hold:
\begin{equation}\label{E5.41}
\begin{gathered}
V^{i_k}_{N_k}\to V, \; \text{in}\; \Sobl^{2, p}(\Rd), \
\quad W_{N_k}\to V, \; \text{in}\; \Sobl^{2, p}(\Rd), \; \text{as}\; N_k\to\infty,
\\
\Breve{r}^{i_kN_k}_{\bm\mu}\to \vartheta, \; \text{in}\; \calCl(\Rd), 
\quad \Hat{r}^{N_k}_{\bm\mu}\to \vartheta, \; 
\text{in}\; \calCl(\Rd), \; \text{as}\; N_k\to\infty\,,
\\
\Tilde{\varrho}^{i_k}_{N_k}\to \varrho, \quad \lambda_{N_k}\to \varrho, \; 
\text{as}\; N_k\to\infty\,,
\\
v^{i_k}_{N_k}\to v, \; \text{in}\; \Ussm, \quad w_{N_k}\to w, \; 
\text{in}\; \Ussm, \; \text{as}\; N_k\to\infty\,.
\end{gathered}
\end{equation}
The convergence in the first and third lines are justified by the compactness 
property and the uniqueness of the limit which we established earlier. 
For the
second line we use the Arzel\`{a}--Ascoli theorem and \eqref{E5.31}--\eqref{E5.32}, 
while the fourth line is a consequence of the compactness property of $\Usm$
\cite[Section~2.4]{ari-bor-ghosh}. 
We first show that $v=w$. From \eqref{E5.17}, \eqref{E5.29} and \eqref{E5.41} we
obtain
\begin{equation}\label{E5.43}
\min_{u\in\Act}\;\bigl[L^u V(x) + \vartheta(x,u)\bigr]
\; =\; \varrho, \quad V(0)\;=\;0, \quad V\;\in\;\sorder(\calV)\,.
\end{equation}
Using Assumption~\ref{A5.3}\,(iii) it is also easy to see that 
$v^{i_k}_{N_k}\to v$ and $w_{N_k}\to w$ pointwise, as $N_k\to\infty$ and 
$$\min_{u\in\Act}\;\bigl[L^u V(x) + \vartheta(x,u)\bigr]
\; =\; L^v V(x) + \vartheta(x,v(x)) \;=\; L^w V(x) + \vartheta(x, w(x))\,.$$
Thus using the strict convexity of the Hamiltonian we obtain
$v(x) = w(x)$ for all $x$. 
By \cite[Lemma~3.2.6]{ari-bor-ghosh}, there exists $\mu\in\eH$,
corresponding to $v$, such that
$$\dd(\mu^{i_k}_{N_k}, \mu) + \dd(\mu_{w_{N_k}}, \mu)\;
\xrightarrow[N_k\to\infty]{}\; 0\,.$$
But this contradicts \eqref{E5.40} and thus \eqref{E5.36} holds.

Next we prove part (c). 
In view of \eqref{E5.43} we only need to show that $\vartheta(x,u)= r(x,u, \mu)$ where 
$\mu$ is the invariant probability measure corresponding to the minimizing selector
$v$.
Without loss of generality, we assume that 
$\Hat{r}^N_{\bm\mu}(x,u)\to \varpi(x,u)$ as $N\to\infty$. 
Fix $(x,u)\in\Rd\times\Act$. Then 
$\nu\mapsto r(x,u, \nu)$
is a continuous map. From part (b) we also have
$\sup_{1\le j\le N}\,\dd(\mu^j_N, \mu)\to 0$ 
as $N\to\infty$. 
Let $\Tilde{\nu}_R\df\nu\circ\mathfrak{P}^{-1}_{R}$. 
Then it easy to see
that 
$$\Tilde{\nu}_R \;=\; \nu|_{\Bar{B}_R(0)} + \nu(B_R^c(0))\, \del_{0}\,.$$
Since $\mu^i_N\to \mu$ in total variation 
(by \cite[Lemma~3.2.5]{ari-bor-ghosh}),  we deduce that 
$(\Tilde{\mu}_R)^i_N\to \Tilde\mu_R$ in total variation as well. 
Therefore using \eqref{E5.16.5} and mimicking the arguments in 
\cite[pp.~530]{feleqi} we can show that 
$$\biggl| \int_{\R^{Nd}} r\biggl(x,u, \frac{1}{N}\sum_{i=1}^N \delta_{y^j}\biggr)\,
 \prod_{j=1}^N (\Tilde\mu_R)^j_N(\D{y^j})
 - \int_{\R^{Nd}} r\biggl(x,u, \frac{1}{N}\sum_{i=1}^N \delta_{y^j}\biggr)\,
 \prod_{j=1}^N \Tilde\mu_R(\D{y^j})\biggr|\;\xrightarrow[N\to\infty]{}\; 0\,.$$
On the other hand (see \cite{cardaliaguet, hewitt-savage})
$$\int_{\R^{Nd}} r\biggl(x,u, \frac{1}{N}\sum_{i=1}^N \delta_{y^j}\biggr)\,
\prod_{j=1}^N \Tilde\mu_R(\D{y^j})\;\xrightarrow[N\to\infty]{}\;
r(x,u, \Tilde\mu_R)\,.$$
To complete the proof we use Lemma~\ref{L5.1} and the fact that 
$r(x,u, \Tilde\mu_R)\to r(x,u, \mu)$ as $R\to\infty$.
\end{proof}

\section*{Acknowledgements}
The work of Ari Arapostathis was supported in part by the Office of Naval
Research through grant N00014-14-1-0196, and in part by a grant from
the POSTECH Academy-Industry Foundation.
This research of Anup Biswas was supported in part by a INSPIRE faculty fellowship.

\def\cprime{$'$}

\end{document}